\renewcommand{\epsilon}{\varepsilon}
\renewcommand{\P}{\mathbb{P}}
\newcommand{\C}{\mathbb{C}}
\newcommand{\hc}{\textbf H^2_\C}
\newcommand{\R}{\mathbb{R}}
\DeclareMathOperator{\im}{Im}
\DeclareMathOperator{\re}{Re}
\DeclareMathOperator{\id}{Id}
\DeclareMathOperator{\is}{Is}
\DeclareMathOperator{\area}{Area}
\theoremstyle{plain}
\newtheorem{theo}{Theorem}[section]
\newtheorem{lemma}[theo]{Lemma}
\newtheorem{prop}[theo]{Proposition}
\theoremstyle{definition}
\newtheorem{defin}[theo]{Definition}
\newtheorem{rk}[theo]{Remark}
\title{Deligne-Mostow lattices with three fold symmetry and cone metrics on the sphere}
\author{I. Pasquinelli}
\begin{document}

\maketitle

\begin{abstract}
Deligne and Mostow, in \cite{mostow2}, \cite{mostow3} and \cite{delignemostow}, constructed a class of lattices in $PU(2,1)$ using monodromy of hypergeometric functions. 
Thurston in \cite{thurston} reinterpreted them in terms of cone metrics on the sphere. 
In this spirit we construct a fundamental domain for the lattices with three fold symmetry in the list of Deligne and Mostow.
This is a generalisation of the works in \cite{livne} and \cite{boadiparker} and gives a different interpretation of the fundamental domain constructed in \cite{type2}. 
\end{abstract}


\section{Introduction}
One of the main goals in complex hyperbolic geometry is the study of lattices in $PU(n,1)$. 

In complex dimension two, Deligne and Mostow, in several works, including \cite{delignemostow} (as explained in the survey article \cite{survey}), gave several constructions of lattices arising as monodromy groups of hypergeometric functions, which were defined using 5 parameters satisfying some properties, called a ball quintuple.
This also leads to a sufficient condition on the ball quintuple for the monodrmy group to be a lattice, called $\Sigma$INT.
Later, Thurston (see \cite{thurston}) showed that they can equivalently be seen as modular groups of flat cone metrics on the sphere. 
Following this approach, one can consider quintuples of cone angles at singularities (strictly related to the ball quintuples) and obtain an explicit, sufficient condition on them for the modular group to be a lattice.
This condition is called Thurston's orbifold condition and is equivalent to Mostow's $\Sigma$INT condition.
Mostow then also found more ball quintuples giving discrete groups but not satisfying $\Sigma$INT.
Sauter, in \cite{sauter}, studied these groups and showed that they are all commensurable to some groups in the original list.

Among the lattices in the original list from Deligne and Mostow work, we consider the ones with three fold symmetry. 
This means that three of the five singularities have the same angle. 
In \cite{survey}, Parker gives a table summarising all the three fold symmetry lattices, included the ones studied by Sauter, that will not be treated in this work. 
The table with Deligne and Mostow's lattices is explained in Section \ref{list} and in this work we will consider all the 39 values of parameters contained in the table.

For some of these a fundamental domain has already been constructed.
In particular, Deraux, Falbel and Paupert in \cite{type2} gave a construction for some Mostow groups.
Later, Parker in \cite{livne} constructed a fundamental polyhedron for the Livné lattices using a slightly different method. 
Later on, Boadi and Parker in \cite{boadiparker} used the same method for obtaining a fundamental domain for some Mostow groups of the first type. 
We will use the latter method.
In \cite{survey} Parker summarises the known constructions and shows the relation between the method used for Livné lattices and lattices of the first type and the one used in \cite{type2}. 

In this paper we will give a general construction which covers all the remaining cases, but also contains the previous ones. 

The next section will briefly define the complex hyperbolic space and give basic properties about it, its isometries and its subspaces. 
Each lattice we will be working on is identified by some parameters which we explain in Section \ref{list} and we will say it is of a certain type according to their values. 
The three fold symmetry lattices will be described in Section \ref{list}.
In particular, we will explain how they are determined by two parameters, $p$ and $k$, as explained in the survey \cite{survey}, that determine which class the lattice is in. 
Working in full generality on the parameters, we will see that as the parameters vary within a certain range, the combinatorics of the fundamental polyhedron is fully determined. 

Starting with a cone metric on a sphere with five cone singularities with prescribed angles (arising from Mostow's ball quintuples) and area one, we show that the moduli space of such configurations is a complex hyperbolic space, following Thurston's approach.

The key remark lies in the fact that if we cut the sphere through the five singular points and then we open it up, we get an octagonal shaped figure $\Pi$.
Such $\Pi$, and hence the space of cone metrics on the sphere with prescribed angles, can be parametrised by points in $\C^3$ and its area is a Hermitian form $H$ of signature (1,2) on $\C^3$. 
Since we consider metrics of area one, hence configurations up to rescaling and we want the area to be positive, we get a complex hyperbolic 2-space as the moduli space. 

Then we define some moves on the cone structures, which correspond to isometries in $\hc$. 
The first two are obtained by swapping two of the three singularities with the same angle. 
The third one is a generalisation of Thurston's butterfly moves (see \cite{thurston}). 
The isometries given by the moves are generators for the group $\Gamma$, which is a lattice in $PU(H)$ and for which we construct a fundamental domain. 

Following Thurston's idea, we consider what happens when one or more cone singularities collapse, becoming a single point. 
These will be the vertices of the polyhedron and of its images under the isometries defined by the moves. 

Each side of the polyhedron (i.e. maximal dimension facet) is contained in a bisector.
Bisectors are among the best understood subspaces of the complex hyperbolic plane and have some useful properties. 
By intersecting the sides and calculating the dimension of these intersections we then find also 2-dimensional and 1-dimensional facets of the polyhedron. 
They are called the ridges and the edges. 

Finally, we use Poincaré's polyhedron theorem to prove that the polyhedron we constructed is actually a fundamental domain for $\Gamma$. 
For the polyhedron to verify Poincaré's theorem it needs to satisfy a few conditions. 
In particular, some combinations of the three moves, that are the generators of $\Gamma$, have to pair the sides sending one in the other, in a way that satisfies some special properties, according to the theorem. 
Because of this they are called side pairing maps.
Moreover, we have some conditions on the ridges, the most difficult of which has been to prove that the polyhedron and its images under the side pairing maps tessellate a neighbourhood of the interior of each ridge. 
A slightly different method will be used when the $k$ parameter is not an integer.

The power of Poincaré's polyhedron theorem lies not only in the fact that it proves that the polyhedron is actually a fundamental domain for the group, but also because it gives a presentation for the group.
The conditions on sides and ridges consist, in fact, also of some relations on the maps, called respectively reflection relations and cycle relations. 
Using the side pairing maps as generators and such relations, we get a full presentation for the group, which makes the picture more complete. 

The previous cases mentioned, in which such a method has been already applied, are implicitly contained in the construction we worked out and our approach unifies them.
As we said, according to the range values of $p$ and $k$, we have different cone angles and hence different configurations. 

In particular, we showed that all previous cases can be obtained from our polyhedron by collapsing some vertices, mainly three by three. 
Equivalently, we can find our polyhedron from the previous ones by "cutting" some of the vertices, so as to obtain three vertices and a new ridge instead. 
In particular, the polyhedron found in \cite{livne} and \cite{boadiparker} is exactly the one we construct here. 
In the last part we explain the relation between our construction and the one in \cite{type2}.

I would like to thank my supervisor, John  Parker, for his constant support and the many insightful discussions during the preparation of this work.
This research was supported by a Doctoral EPSRC Grant, awarded by Durham University.  

\section{Complex hyperbolic space} \label{H2C}

In this section we will define the complex hyperbolic space, its main properties and some information about its isometries. 
All the information presented here can be found in more depth in the book from Goldman \cite{goldman}.

\subsection{Definition}

The complex hyperbolic space arises naturally as a complex analogue to the real hyperbolic space $\textbf{H}^n_\R$.
The real hyperbolic plane is, in fact, an example of complex hyperbolic space of dimension 1. 
Generalising this construction to a complex vector space we get complex hyperbolic space. 

Let us take a complex vector space $\C^{n,1}$ of dimension $n+1$, equipped with a Hermitian form of signature $(n,1)$.
We consider the Hermitian form in matrix form, given by an Hermitian matrix $H$ (i.e. $H=H^*$), which is non singular, with $n$ positive eigenvalues and one negative. 
Here $A^*$ is always be defined by $A^*=\overline{A^T}$ and the same notation will be used for vectors.

Such matrix gives a product law on $\C^{n,1}$ that we denote
\[
\langle \textbf z, \textbf w \rangle = \textbf w ^* H \textbf z.
\]
For $\textbf z \in \C^{n,1}$, its norm under the product just defined, $\langle \textbf z, \textbf z \rangle = \textbf z^* H \textbf z$, is real, but it can be positive, negative or zero. 
We hence decompose the space $\C^{n,1} \setminus \{0\}$ in subspaces made of vectors where $\langle \textbf z, \textbf z \rangle$ is positive, zero or negative, namely $V_+, V_0, V_-$ respectively. 

We now projectivise $\C^{n,1} \setminus \{0\}$ by identifying all non-zero complex multiples of a given vector. 
In other words, we are considering the projection $\P$ of $\C^{n,1} \setminus \{0\}$ onto $\C\P^n$. 
The projection $\P$ preserves the subspaces $V_+, V_0$ and $V_-$, because for $\lambda \in \C \setminus \{0\}$, we have 
\[
\langle \lambda \textbf z, \lambda \textbf z \rangle= (\lambda \textbf z )^* H (\lambda \textbf z )= \lvert \lambda \rvert ^2 \textbf z ^* H \textbf z = \lvert \lambda \rvert ^2 \langle \textbf z, \textbf z \rangle
\]
and hence $\langle \lambda \textbf z, \lambda \textbf z \rangle$ and $\langle \textbf z, \textbf z \rangle$ must have same sign. 
In other words $\textbf z$ and $\lambda \textbf z$ must be in the same subspace. 

We are now ready to define the complex hyperbolic space as $\textbf H_\C^n=\P V_-$, i.e. the space of vectors of negative norm, up to multiplication by complex numbers. 
Its boundary is $\partial \textbf H_\C^n=\P V_0$.

On such space we consider the Bergman metric, given by the formula 
\[
ds^2=\frac{-4}{\langle \textbf z, \textbf z \rangle^2} \det
\begin{pmatrix}
\langle \textbf z, \textbf z \rangle & \langle d\textbf z, \textbf z \rangle \\
\langle \textbf z, d \textbf z \rangle & \langle d\textbf z, d\textbf z \rangle
\end{pmatrix}.
\]
Consequently, for two points $\textbf z$ and $\textbf w$, their distance $\varrho (\textbf z, \textbf w)$ is given by
\begin{align}\label{dist}
\cosh ^2 \left( \frac{\varrho(\textbf{z}, \textbf w)}{2} \right) =\frac{\langle \textbf{z},\textbf w \rangle \langle \textbf w, \textbf z \rangle}{\langle \textbf{z},\textbf{z} \rangle \langle \textbf w, \textbf w \rangle}.
\end{align}

\subsection{The group of isometries and its subgroups}

The group of holomorphic isometries of $\textbf H_\C^n$ is generated by the projectivisation of the group of matrices that are unitary with respect to $H$. 
More precisely, let $U(H)$ be the group of square matrices of dimension $n+1$ such that $A^* H A=H$.
We say that such matrices are unitary with respect to $H$. 
Naturally, we will have $SU(H)$ the subgroup of such matrices with determinant equal 1. 

To get the holomorphic isometries of $\textbf H_\C^n$, we need to projectivise such a group as we did for the space itself, whence the holomorphic isometry group of $\textbf H_\C^n$ is
\[
PU(H)=U(H) / \{e^{i \theta}I \colon \theta \in [0,2\pi)\}.
\]
This group and complex conjugation generate the full isometry group of $\textbf H_\C^n$. 
Sometimes, to stress the dimension of the complex hyperbolic space it acts on, we will denote this group as $PU(n,1)$.

The goal of this work is to give an explicit construction of a fundamental domain for some lattices in $PU(H)$ for the 2-dimensional complex hyperbolic space.  
We make the convention that a fundamental domain is always an open region.
Lattices are a particular kind of subgroup and we will give this definition to conclude this section. 
Let $G=PU(H)$. 
A discrete subgroup $\Gamma$ is a lattice when the quotient $\Gamma \backslash \textbf H_\C^n$ has finite volume with respect to the Bergman metric. 

\subsection{Bisectors}\label{bisectors}

One of the most important classes of submanifolds in complex hyperbolic geometry is that of bisectors. 
In this section we will give a brief description and expose the main properties we will need. 
These subspaces have been widely studied and more details can be found in \cite{goldman}.

Bisectors are defined as the locus of points in the complex hyperbolic space which are equidistant from two given points, say $\textbf{z}_i$ and $\textbf{z}_j$.
By the formula in \eqref{dist}, it gives 
\[
\frac{\langle \textbf{z},\textbf{z}_j \rangle \langle \textbf{z}_j, \textbf{z} \rangle}{\langle \textbf{z},\textbf{z} \rangle \langle \textbf{z}_j, \textbf{z}_j \rangle}=\cosh ^2 \left( \frac{\varrho(\textbf{z}, \textbf z_j)}{2} \right) = \cosh^2 \left( \frac{\varrho(\textbf{z}, \textbf z_i)}{2} \right) =\frac{\langle \textbf{z},\textbf{z}_i \rangle \langle \textbf{z}_i, \textbf{z} \rangle}{\langle \textbf{z},\textbf{z} \rangle \langle \textbf{z}_i, \textbf{z}_i \rangle},
\]
and if $\textbf z_i$ and $\textbf z_j$ have the same norm, the definition becomes:
\[
B=B(\textbf z_i,\textbf z_j)=\{ \textbf z \in \hc \colon \lvert \langle \textbf z, \textbf z_i \rangle \rvert =
\lvert \langle \textbf z, \textbf z_j \rangle \rvert \}.
\]

The complex line $L$ spanned by $\textbf z_i$ and $\textbf z_j$ is called \emph{complex spine} of the bisector. 
Inside $L$ there is a geodesic $\gamma$ which is the intersection between the complex spine and the bisector and it is called the \emph{spine} of the bisector. 

In the complex hyperbolic space there are no totally geodesic real hypersurfaces, so also the bisectors are obviously not totally geodesic. 
They can be foliated though by totally geodesic subspaces in two different ways: with slices or with meridians. 

To define the slices first take the map $\Pi_L$, which is the orthogonal projection of the whole space on the complex spine $L$. 
Then $B$ is the preimage by $\Pi_L$ of $\gamma$.
We hence define a \emph{slice} to be a complex line that is a fibre of the map $\Pi_L$, i.e. the preimage of a point of $\gamma$.

The other foliation is by meridians. A \emph{meridian} is a totally geodesic Lagrangian plane containing the spine $\gamma$. 
The bisector is the union of all its meridian.
A meridian is also the set of points fixed by a antiholomorphic involution which swaps $\textbf z_i$ and $\textbf z_j$.

Other important subspaces related to bisectors are Giraud discs. 
Take three points $\textbf{z}_i,\textbf{z}_j$ and $\textbf{z}_k$, not all contained in a complex line. 
Consider then $B(\textbf z_i,\textbf z_j, \textbf z_k)$, the set of points equidistant from these three points. 
Giraud's theorem tells us that such set is contained in exactly three bisectors $B(\textbf z_i,\textbf z_j), B(\textbf z_i,\textbf z_k)$ and $B(\textbf z_j,\textbf z_k)$.
Moreover, $B(\textbf z_i,\textbf z_j, \textbf z_k)$ is a smooth non totally geodesic disc, called a \emph{Giraud disc}.

\section{Mostow lattices with a 3-fold symmetry}\label{list}

The main goal of this work is to give a fundamental domain for all Deligne-Mostow lattices with three fold symmetry. 
In this section we will briefly describe, following \cite{survey}, how to parametrise these lattices. 

The initial work of Deligne and Mostow makes such lattices arise as monodromy groups of hypergeometric functions. 
Later, Thurston reinterpreted them in terms of modular group of cone metrics on the sphere.
Following this approach, we will show that the moduli space of cone metrics on the sphere with prescribed cone singularities have a complex hyperbolic structure and see the lattices as subgroups of automorphisms of the sphere. 

An important concept, appearing first in the work of Deligne and Mostow, is the one of \emph{ball $N$-tuple}. 
A ball $N$-tuple is a set of $N$ real numbers $\mu=(\mu_1, \dots, \mu_N)$ verifying the conditions
\begin{align}\label{ballntuple}
\sum_{i=1}^{N} \mu_j &=2, & 0<\mu_i&<1, & \text{ for } i=1, \dots, N.
\end{align}

Now, a cone singularity on a surface is a point around which the total angle is not $2\pi$. 
In general, it can be any value, but in this work we will consider it to be in $(0,2\pi)$. 

A flat cone metric on the sphere is a metric modelled on $\R^2$ except for a finite number of points that are cone singularities. 
Around these points the surface can be described by  taking the part of $\R^2$ defined by $\{z=r e ^{i \theta} \in \C \colon 0 \leq \theta \leq \theta_0 \}$ and identifying the edges of the sector through the map $r \sim r e^{i \theta_0}$.
We will say that such a point is a cone singularity of angle $\theta_0$ and we will call its curvature the value $\alpha = 2\pi -\theta_0$.
Outside the singularities the curvature is 0.

If we have $N$ cone singularities, the curvatures $\alpha_1, \dots, \alpha_N$  must satisfy 
\begin{align}\label{curvatures}
\sum_{i=1}^{N} \alpha_j &=4 \pi, & 0<\alpha_i&<2\pi, & \text{ for } i=1, \dots, N.
\end{align}

Comparing \eqref{ballntuple} and \eqref{curvatures} it is obvious that there is a correspondence between the two: for a ball $N$-tuple $(\mu_1, \dots, \mu_N)$ we can construct a cone metric on the sphere with curvatures $\alpha_i=2\pi \mu_i$ and vice versa. 

As we will see for the case $N=5$, $N$ cone singularities on the sphere will give a subgroup in $PU(N-3,1)$, which is a lattice for singularities with certain prescribed curvatures (equivalently, for certain ball 5-tuples). 
For $N=5$, we say that a lattice has three fold symmetry when at least three of the values of the corresponding ball $N$-tuple (and hence the cone angles) are equal. 

Table \ref{pkangle} summarizes all Deligne-Mostow lattices with three fold symmetry. 
The lattices are divided according to the sign of the four parameters in the first four columns, $p,k,l$ and $d$. 
These values are very important, as we will see that they are the order of some special elements of the lattices. 
In particular, the first two can uniquely determine the ball quintuple and hence the curvature and the cone angles of the singularities on the sphere, from which we can obtain a lattice in the way we will see in the following sections. 

The elements of the ball quintuple, listed in the last columns, are related with the parameters $p$ and $k$ in the following way: 

\begin{align}\label{pkangle}
\mu_1&=\frac{1}{2}+\frac{1}{p}-\frac{1}{k}, 
& \mu_2=\mu_3=\mu_4&=\frac{1}{2} -\frac{1}{p}, 
& \mu_5=\frac{2}{p}+\frac{1}{k}.
\end{align}
The other parameters are defined from the two first ones in the following way:
\begin{align}\label{ldef}
\frac{1}{l}&=\frac{1}{2}-\frac{1}{p}-\frac{1}{k}, 
& \frac{1}{d}&=\frac{1}{2} -\frac{3}{p},
& t&=-\frac{1}{2}+\frac{1}{p}+\frac{2}{k}.
\end{align}

\newpage
\begin{center}
\begin{longtable}{|c|c|c|c|c|c|c|c|}
\hline
$p$ & $k$ & $l$ & $d$ & $t$ & $\mu_1$ & $\mu_{2,3,4}$ & $\mu_5$ \\
\hline
3 & 4 & -12 & -2 & 1/3 & 7/12 & 1/6 & 11/12 \\
3 & 5 & -30 & -2 & 7/30 & 19/30 & 1/6 & 13/15 \\
3 & 6 & $\infty$ & -2 & 1/6 & 2/3 & 1/6 & 5/6 \\
4 & 3 & -12 & -4 & 5/12 & 5/12 & 1/4 & 5/6 \\
4 & 4 & $\infty$ & -4 & 1/4 & 1/2 & 1/4 & 3/4 \\
5 & 2 & -5 & -10 & 7/10 & 1/5 & 3/10 & 9/10 \\
5 & 5/2 & -10 & -10 & 1/2 & 3/10 & 3/10 & 4/5 \\
5 & 3 & -30 & -10 & 11/30 & 11/30 & 3/10 & 11/15 \\
6 & 2 & -6 & $\infty$ & 2/3 & 1/6 & 1/3 & 5/6 \\
6 & 3 & $\infty$ & $\infty$ & 1/3 & 1/3 & 1/3 & 2/3 \\
\hline
3 & 7 & 42 & -2 & 5/42 & 29/42 & 1/6 & 17/21 \\
3 & 8 & 24 & -2 & 1/12 & 17/24 & 1/6 & 19/24 \\
3 & 9 & 18 & -2 & 1/18 & 13/18 & 1/6 & 7/9 \\
3 & 10 & 15 & -2 & 1/30 & 11/15 & 1/6 & 23/30 \\
3 & 12 & 12 & -2 & 0 & 3/4 & 1/6 & 3/4 \\
4 & 5 & 20 & -4 & 3/20 & 11/20 & 1/4 & 7/10 \\
4 & 6 & 12 & -4 & 1/12 & 7/12 & 1/4 & 2/3 \\
4 & 8 & 8 & -4 & 0 & 5/8 & 1/4 & 5/8 \\
5 & 4 & 20 & -10 & 1/5 & 9/20 & 3/10 & 13/20 \\
5 & 5 & 10 & -10 & 1/10 & 1/2 & 3/10 & 3/5 \\
6 & 4 & 12 & $\infty$ & 1/6 & 5/12 & 1/3 & 7/12 \\
6 & 6 & 6 & $\infty$ & 0 & 1/2 & 1/3 & 1/2 \\
\hline
7 & 2 & -7 & 14 & 9/14 & 1/7 & 5/14 & 11/14 \\
8 & 2 & -8 & 8 & 5/8 & 1/8 & 3/8 & 3/4 \\
9 & 2 & -9 & 6 & 11/18 & 1/9 & 7/18 & 13/18 \\
10 & 2 & -10 & 5 & 3/5 & 1/10 & 2/5 & 7/10 \\
12 & 2 & -12 & 4 & 7/12 & 1/12 & 5/12 & 2/3 \\
18 & 2 & -18 & 3 & 5/9 & 1/18 & 4/9 & 11/18 \\
\hline
7 & 3 & 42 & 14 & 13/42 & 13/42 & 5/14 & 13/21 \\
8 & 3 & 24 & 8 & 7/24 & 7/24 & 3/8 & 7/12 \\
9 & 3 & 18 & 6 & 5/18 & 5/18 & 7/18 & 5/9 \\
10 & 3 & 15 & 5 & 4/15 & 4/15 & 2/5 & 8/15 \\
12 & 3 & 12 & 4 & 1/4 & 1/4 & 5/12 & 1/2 \\
18 & 3 & 9 & 3 & 2/9 & 2/9 & 4/9 & 4/9 \\
\hline
7 & 7/2 & 14 & 14 & 3/14 & 5/14 & 5/14 & 4/7 \\
8 & 4  & 8 & 8 & 1/8 & 3/8 & 3/8 & 1/2 \\
9 & 9/2 & 6 & 6 & 1/18 & 7/18 & 7/18 & 4/9 \\
10 & 5 & 5 & 5 & 0 & 2/5 & 2/5 & 2/5 \\
12 & 4 & 6 & 4 & 1/12 & 1/3 & 5/12 & 5/12 \\
\hline
\caption{Deligne-Mostow lattices with three fold symmetry.} 
\label{tablelist}
\end{longtable}
\end{center}

The fifth parameter, $t$ is a real parameter used by Mostow to describe the lattices, together with $p=3,4,5$ in \cite{mostow}. 
It is called the phase shift, because Mostow's phase parameter is $\varphi$, defined by $\varphi ^3=e^{\pi i t}$. 
One particular critical value of this parameter is $\frac{1}{2}-\frac{1}{p}$. 
In Section \ref{mainthm} we will see why this is relevant for our analysis. 
We will say, following Mostow, that it is a lattice with \emph{large phase shift} if the condition $\lvert t \rvert > \frac{1}{2}-\frac{1}{p}$ holds.
The opposite condition is a \emph{small phase shift}.

\section{Cone structures}

\begin{figure}
\centering
\includegraphics[width=0.3\textwidth]{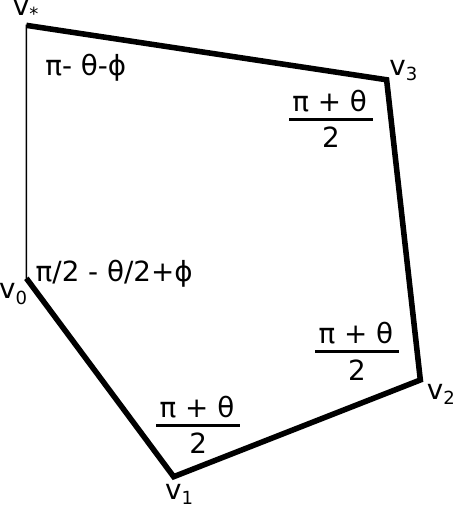}
\begin{quote}\caption{Double pentagon and cut through the five points. \label{doublepentagon}} \end{quote}
\end{figure}

Let us now consider a cone metric on the sphere with 5 cone singularities of angles 
\begin{equation} \label{singularities}
\left( \pi-\theta+2 \phi, \pi + \theta, \pi +\theta, \pi + \theta, 2\pi -2\theta -2\phi \right).
\end{equation}
We call the cone points $v_0, v_1, v_2, v_3, v_*$ respectively.
The angles $\theta$ and $\phi$ correspond respectively to $\frac{2 \pi}{p}$ and $\frac{\pi}{k}$, with $p$ and $k$ in Table \ref{tablelist}.

For simplicity, let us first assume that the position of the five cone singularities is, as in Figure \ref{doublepentagon}, such that the sphere is like a pentagonal pillowcase and let us consider a path in the sphere that starts from $v_0$ and passes in order through $v_1, v_2, v_3$, ending in $v_*$. 
Suppose we cut through this path and open up the surface, obtaining an octagon like the one in Figure \ref{octagonreal}, which we call $\Pi$. 
To be able to express the vertices of $\Pi$ with coordinates, we impose that the vertex $v_*$ coincides with the origin of the complex plane and we place $\Pi$ such that the coordinate of $v_0$ is a multiple of $i$ by a negative real number.
The vertices with positive real coordinates will be called $v_1, v_2, v_3$, while the corresponding vertices with negative real coordinates will be $v_{-1}, v_{-2}, v_{-3}$.

The sides of $\Pi$ are pairwise identified through a reflection with respect to the imaginary axis and this identification allows us to recover the cone metric on the sphere. 
More precisely, the vertices $v_i$ are identified to $v_{-i}$ and the edge between $v_i$ and $v_j$ is identified with the one between $v_{-i}$ and $v_{-j}$.
Since only the boundary points and not the interior are identified, this gives us back the shape of the cone metric as two pentagons glued through the boundary, forming the pentagonal pillowcase we started from. 

We can also describe $\Pi$ in terms of three real parameters, which we will call $x_1, x_2, x_3$.
Let us take three triangles $T_1, T_2$ and $T_3$ in the following way.
The triangle $T_1$ has the three angles $\phi$, $\frac{\pi - \theta}{2}$ and $\frac{\pi}{2}+\frac{\theta}{2}-\phi$ and side $x_1$ opposite to the angle $\frac{\pi}{2}+\frac{\theta}{2}-\phi$.
The triangle $T_2$ is isosceles. It has two angles equals to $\frac{\pi - \theta}{2}$ and one $\theta$. 
The two equal sides have length $x_2$. 
The triangle $T_3$ has the three angles $\phi$, $\pi - \theta -\phi$ and $\theta$ and side $x_3$ opposite to the angle $\pi - \theta -\phi$.

We now construct an octagon $\Pi$ by first taking a copy of the third triangle $T_3$, with the vertex with angle $\pi-\theta-\phi$ at 0 and the one with angle $\phi$ along the imaginary axis and below it.
Then remove from $T_3$ a copy of $T_2$ by making the two vertices of angle $\theta$ coincide and by making $x_2$ and $x_3$ be collinear, both vectors pointing towards the common corner of the two triangles $T_3$ and $T_2$. 
Similarly, remove from the figure obtained a copy of $T_1$ disposed such that the vertex of angle $\phi$ of $T_1$ coincides with the one of $T_3$ with the same angle and such that $x_2, x_3$ are collinear and pointing in the same direction.
At this point we reflect the whole construction along the imaginary axis, obtaining three more triangles $T_{-3}$, $T_{-2}$ and $T_{-1}$. 
We consider the quadrilateral made of the two triangles $T_3$ and $T_{-3}$, from which we delete triangles $T_i$, for $i \in \{\pm 1, \pm 2\}$. 
The figure obtained is an octagon $\Pi$ as in Figure \ref{octagonreal}.
This is clearly the same figure as we described previously when we label the vertices as explained before. 

It is easy, in the system previously described, to calculate the coordinates of the vertices of the octagon.
These are the same value that one can find in \cite{boadiparker}.

\begin{figure}[t]
\centering
\includegraphics[width=0.9\textwidth]{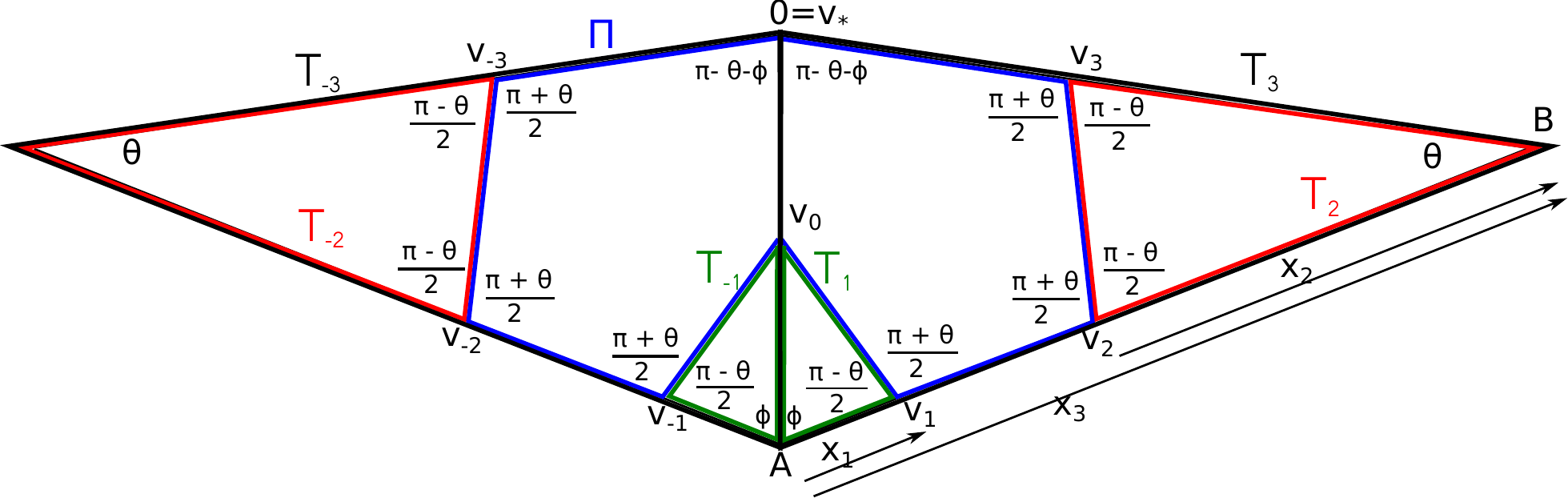}
\begin{quote}\caption{Octagon $\Pi$ when the parameters are real. \label{octagonreal}} \end{quote}
\end{figure}

We now consider a generic metric on the sphere and the same procedure applies, but we need now to allow the three variables to be complex, in order to describe all possible mutual positions of the singularities.
The variables describing the octagon will be called $z_1, z_2$ and $z_3$. 
We construct an octagon by taking the same three triangles and making the same vertices of the triangles coincide as before, but the three variables will be two dimensional vectors representing the sides of the triangles and they will no longer line up.
It will be as in Figure \ref{octagoncx}.

As before, we can recover the metric on the sphere identifying the side between $v_i$ and $v_j$ with the one between $v_{-i}$ and $v_{-j}$. 
We obviously obtain a cone manifold which is homeomorphic to the sphere and has five cone point of angles equal to those that we had in the beginning.

In the case of real variables $x_i$'s, the area of the right half of the octagon can be obtained taking the area of $T_3$ and subtracting the area of $T_1$ and the area of $T_2$.
We then need to double this quantity to have the total area of $\Pi$.
When allowing the variables to be complex, we can see, using a cut and paste map, that the area remains given by the same formula substituting each $x_i$ with $z_i$ complex.
A simple calculation then shows that 
\begin{align}
\area\Pi&=2 \left(\area T_3-\area T_1 - \area T_2 \right) \nonumber \\
&=\frac{\sin \theta \sin \phi}{\sin(\theta + \phi)} |z_3|^2 -\sin \theta |z_2|^2-\frac{\sin \theta \sin \phi}{(\sin \phi + \sin(\theta - \phi))}|z_1|^2. \label{area}
\end{align}
We remark that these are the same values obtained in \cite{boadiparker}.

\begin{figure}[t]
\centering
\includegraphics[width=0.9\textwidth]{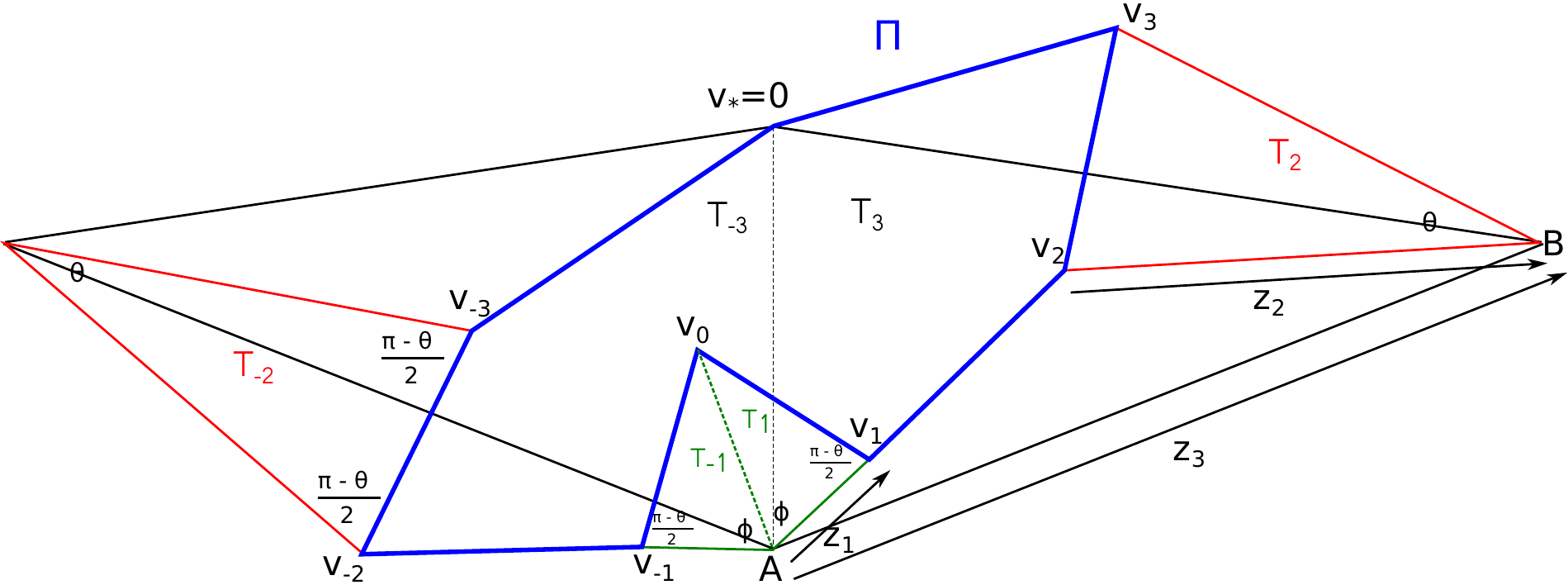}
\begin{quote}\caption{Octagon $\Pi$ when the parameters are complex. \label{octagoncx}} \end{quote}
\end{figure}

\section{Moves on the cone structures}

We will now define automorphisms of the polygons described above. 
This is the same procedure as in \cite{boadiparker}, which generalised \cite{livne}. 

We know that the second, third and fourth vertices have the same angle. 
This means that there is no canonical way of ordering them while chosing a path through the five points. 
Two of the three moves we will define are made by exchanging the order of the three cone point of same angle when making the cut. 
The third move will be in the spirit of Thurston's butterfly moves (see \cite{thurston}).

\begin{figure}[t]
\centering
\includegraphics[width=1\textwidth]{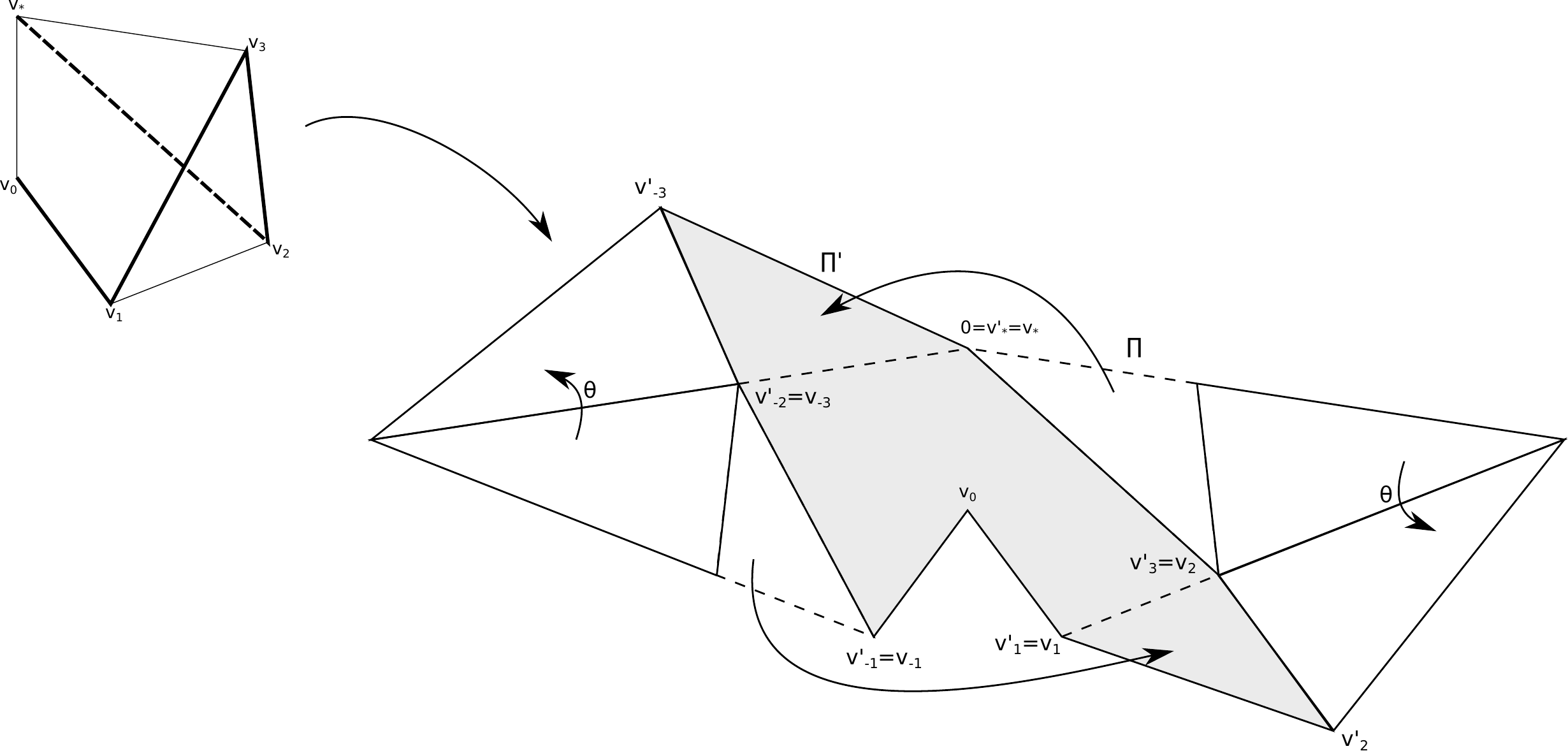}
\begin{quote}\caption{The cut for $R_1$ and the octagon we obtain. Vertices $v_i'$'s are the images under $R_1$ of $v_i$'s.\label{cutR1}} \end{quote}
\end{figure}

The first move $R_1$ fixes the vertices $v_*, v_0$ and $v_{1}$, and exchanges $v_{2}$ and $v_{3}$. 
This is equivalent to saying that the path on the sphere along which we will open up the surface to give the polygon $\Pi$ will be done starting in $v_0$, continuing in $v_{1}$ as before, but then passing, in order, through $v_{3}$ and $v_{2}$ and ending in $v_*$.
In Figure \ref{cutR1} we show the new cut in the glued pentagons case and the octagon that we obtain.

The new octagon can be obtained from the previous one by a cut and paste. 
In fact, the new cut from $v_*$ goes directly where $v_2$ was previously, as this is the image of $v_3$. 
So the triangle $v_*$, $v_3$, $v_2$ has to be glued on the segment between $v_*$ and $v_{-3}$ according to the identification of the sides.
Similarly, the triangle $v_{-1}$, $v_{-2}$, $v_{-3}$ has to be glued on the edge $v_1$, $v_2$, as in Figure \ref{cutR1}. 
This means that the move $R_1$ does not change the area of the octagon.

One way to find the matrix of $R_1$ is by describing geometrically the position of the new variables, image of the $z_i$'s.
In fact, if we leave $z_3$ and $z_1$ as before and we multiply $z_2$ by $e^{i \theta}$, it geometrically means that we are rotating $T_2$ and $T_{-2}$ by $\theta$, fixing the vertex corresponding to angle $\theta$, by definition of the variables. 
It is easy to see that this gives the configuration on the right hand side of Figure \ref{cutR1}.  

The matrix of $R_1$ will hence be:
\[R_1=
\begin{bmatrix}
1 & 0 & 0 \\
0 & e^{i \theta} & 0 \\
0 & 0 & 1 \\
\end{bmatrix}.
\]

There is yet another way of calculating the matrix.
As we can see in the figure, some of the images will be in the position where the vertices originally were. 
This means that, when considering their dependence on the new variables, it is enough to ask that the coordinates of these images (in term of the image of the variables $z_i$'s) coincide with the coordinates of the original vertices, which depended on the $z_i's$ themselves.
More specifically, to find the matrix of $R_1$, we need to solve equations  $v_0'=v_0$, $v_1'=v_1$, $v_3'=v_2$ and $v_{-2}'=v_{-3}$. 

\begin{figure}[t]
\centering
\includegraphics[width=1\textwidth]{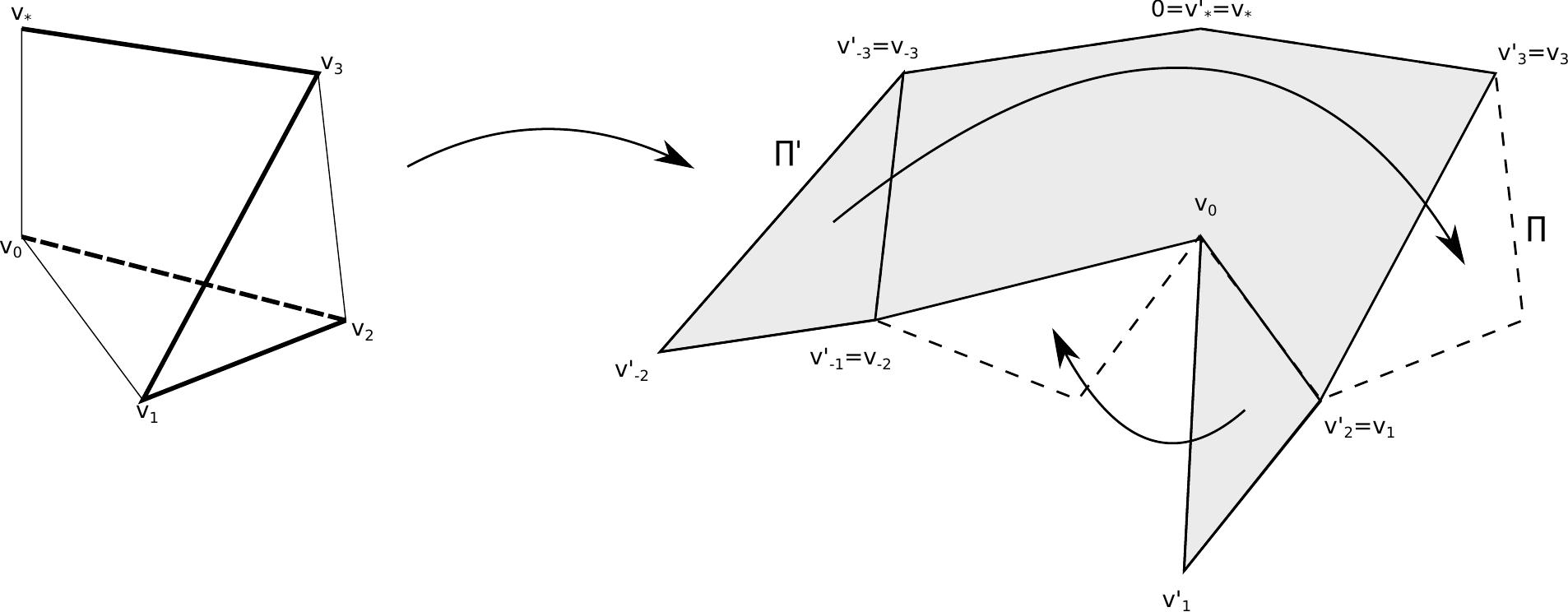}
\begin{quote}\caption{The cut for $R_2$. Again, $v_i'$ is the image under $R_2$ of $v_i$ \label{cutR2}} \end{quote}
\end{figure}

Let us now define the second move $R_2$.
This new move fixes $v_*, v_0$ and $v_{3}$, while it interchanges $v_{1}$ and $v_{2}$. 
As before, this means that the cut that we do goes first through $v_0$, then to $v_{2}$ and $v_{1}$ and finally it ends as before by cutting through $v_{3}$ and $v_*$. 
The cut and the octagon are shown is Figure \ref{cutR2}. 

As before, in the figure we also showed the cut and paste map that we need to recover the initial shape. 
In particular, the triangle between $v_3$, $v_2$ and $v_1$ has to be glued on the edge $v_{-2}$, $v_{-3}$, as this time the cut goes from $v_3$ directly to the image of $v_2$, that coincides now with the position of $v_1$.
Similarly, the triangle $v_0$, $v_{-1}$, $v_{-2}$ has to be glued on edge $v_0$, $v_1$. 
Both gluings are done according to the side identifications we described when recovering the come metric from the octagon.
We remark again that the existence of such a cut and paste implies that the area is preserved after applying the move $R_2$.

In this case the easiest method to find the matrix of the transformation is to see its action on the variables that determine the coordinates of the vertices.
According to Figure \ref{cutR2}, we therefore ask that $v_0'=v_0$, $v_2'=v_1$, $v_{-1}'=v_{-2}$ and $v_3'=v_3$. 

After some calculations that can be found in \cite{boadiparker}, we can get the matrix for $R_2$ as:
\[
R_2= \frac{1}{(1-e^{-i \theta}) \sin \phi}
\begin{bmatrix}
-\sin \theta e^{-i \phi} & -\sin \phi -\sin (\theta - \phi) & \sin \phi +\sin(\theta-\phi) \\
-\sin \phi & -\sin \phi e^{-i\theta} & \sin \phi \\
-\sin(\theta +\phi) &-\sin (\theta +\phi) & \sin \phi +\sin \theta e^{i \phi}
\end{bmatrix}.
\]

The two moves $R_1$ and $R_2$ correspond, as automorphisms of the sphere with 5 cone singularities, to a Dehn twist along a curve through the two points we are swapping, not separating the other singularities.

We will finally define the third move $A_1$. 
As we said, this is the generalisation of the "butterfly moves" used by Thurston in \cite{thurston}. 
In his case, he was moving one side across a region shaped like a butterfly such that in the end the signed area is the same.
Here, we make the triangle $T_1$ rotate so that vertices $v_*, v_{2}, v_{3}$ remain fixed, while $v_1'$ coincides this time with $v_{-1}$. 
We obtain an octagon with a point of self intersection and we need to consider the signed area to have it still preserved after applying the move. 

\begin{figure}[t]
\centering
\includegraphics[width=0.6\textwidth]{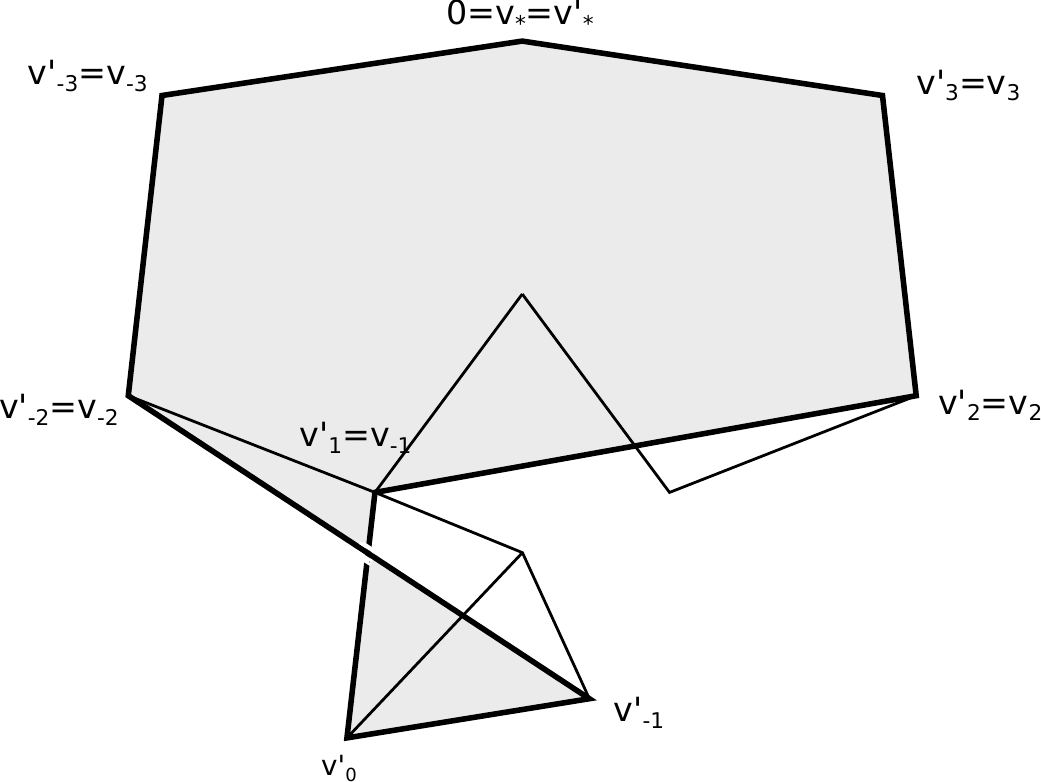}
\begin{quote}\caption{The octagon obtained after applying $A_1$. \label{octagonA1}} \end{quote}
\end{figure}

As we can see in Figure \ref{octagonA1}, the triangles $T_2$ and $T_3$ remain fixed and hence so are the variables $z_2$ and $z_3$.
The third triangle is rotated of an angle of $2\phi$. 
This gives us the matrix of the move, which will be
\[
A_1=\begin{bmatrix}
e^{2i \phi} & 0 & 0 \\
0 & 1 & 0 \\
0 & 0 & 1
\end{bmatrix}.
\]
As before, we can also see how it acts on the vertices and deduce from there the same matrix.

At this point, we want to consider the group $\Gamma= \langle R_1, R_2, A_1 \rangle$. 
For the values of $\phi$ and $\theta$ that we are considering, $\Gamma$ is discrete and is the list of Deligne-Mostow lattices described in Section \ref{list}.
In fact, here we are implementing Thurston's procedure described in \cite{thurston}, which, as he explains, is related with the groups previously constructed by Deligne and Mostow in \cite{delignemostow} and \cite{mostow}.
In the following sections we will construct a fundamental domain for the action of this group on the complex hyperbolic space. 

\section{Complex hyperbolic space as moduli space}

We will see now how the moduli space of cone metrics on the sphere of area 1, seen as the different shapes of polygons $\Pi$ that we can achieve, can be parametrised by a part of complex hyperbolic space. 
The moves we constructed will correspond to actions by isometries on the space. 

As we saw in Section \ref{H2C}, the 2-dimensional complex hyperbolic space is by definition the set of points for which a certain Hermitian form is positive, up to projectivisation.
First of all, up to now, all three parameters $z_1,z_2,z_3$ were freely chosen, but for our purpose two configurations such that the parameters are proportionals by the same constant are the same.
This is because we are considering the cone metrics to have fixed area, following Thurston (see \cite{thurston}, Theorem 0.2). 
From now on, we will hence fix $z_3=1$.
Recall that the area is given by \eqref{area} in terms of $z_1,z_2$ and $z_3$. 
The coordinates $z_1$ and $z_2$ will hence vary while keeping such quantity positive.
On the moduli space of cone metrics on the sphere this is equivalent to projectivising the coordinates. 

Let us now consider the area as given in equation \eqref{area}.
If we consider the Hermitian matrix 
\[
H=\sin \theta 
\begin{bmatrix}
-\frac{\sin \phi}{\sin \phi+\sin(\theta -\phi)}& 0 & 0 \\
0 & -1 & 0 \\
0 & 0 & \frac{\sin \phi}{\sin(\theta +\phi)}
\end{bmatrix},
\]
such formula is equivalent to saying 
\[
\area(\Pi)=\textbf z^* H \textbf z.
\]
In this sense, the area gives an Hermitian form of signature (1,2) on $\C^3$.

We define hence our model of complex hyperbolic space as 
\[
\hc=\{ \textbf z \colon \langle \textbf z, \textbf z \rangle = \textbf z^* H \textbf z > 0 \}.
\] 
Clearly, as we want our $\Pi$ to have positive area, this gives a complex hyperbolic structure on the moduli space of the polygon configurations. 
Equivalently, 
\begin{equation}\label{hypspace}
\hc = \left\{ 
\begin{bmatrix}
z_1 \\ z_2 \\z_3
\end{bmatrix} \colon 
\frac{-\lvert z_1 \rvert ^2 \sin \theta \sin \phi}{\sin \phi +\sin (\theta - \phi)} 
- \lvert z_2 \rvert ^2 \sin \theta 
+ \frac{\sin \theta \sin \phi}{\sin(\theta+\phi)}>0 \right\}.
\end{equation}

Since the moves preserve the area, they are unitary with respect to the Hermitian form, i.e. $R_1^* H R_1=H$ and same for $R_2$ and $A_1$.
This can also easily checked by calculation. 

\subsection{Some special maps}

In the group $\Gamma =\langle R_1, R_2, A_1 \rangle$, we will often use some special elements. 

The first one is $J$, defined as $J=R_1R_2A_1$.
Its matrix is 
\[
J=\frac{1}{\sin \phi (1-e^{-i \theta})}
\begin{bmatrix}
-\sin \theta e^{i \phi} & -\sin \phi - \sin(\theta-\phi) & \sin \phi+\sin(\theta-\phi) \\
-\sin \phi e^{i(2\phi+ \theta)} & -\sin \phi & \sin \phi e^{i \theta}\\
-\sin(\theta+\phi)e^{2i\phi} & -\sin(\theta+\phi) & \sin \phi +\sin \theta e^{i \phi}
\end{bmatrix}.
\]
We remark that $J$ has zero trace and hence it has order 3. 
Most of the time we will consider projective equalities and drop the initial factor $\frac{1}{\sin \phi (1-e^{-i \theta})}$.
Projective equivalence will be denoted by the symbol $\sim$.

The second one is $P$, defined by $P=R_1R_2$.
Its matrix is:
\[
P=\frac{1}{\sin \phi (1-e^{-i \theta})}
\begin{bmatrix}
-\sin \theta e^{-i \phi} & -\sin \phi - \sin(\theta-\phi) & \sin \phi+\sin(\theta-\phi) \\
-\sin \phi e^{i \theta} & -\sin \phi & \sin \phi e^{i \theta}\\
-\sin(\theta+\phi) & -\sin(\theta+\phi) & \sin \phi +\sin \theta e^{i \phi}
\end{bmatrix}.
\]
Note that $J$ previously defined can also be written as $J=PA_1$. 
The transformation $P$ will be used here to give a new set of coordinates different from the $\textbf z$-coordinates used until now.

The new coordinates are defined by 
\[
\textbf{w} = \left[ P^{-1}(\textbf{z})\right].
\]
This gives us the formulae
\begin{align}
w_1&= \frac{-\sin \theta e^{i \phi}z_1
-(\sin \phi +\sin (\theta-\phi))e^{-i \theta} z_2
+\sin \phi +\sin(\theta-\phi)}
{-\sin(\theta+\phi)z_1
-\sin (\theta+\phi)e^{-i \theta}z_2
+\sin \phi +\sin \theta e^{-i \phi}}, \label{w1}\\
w_2&= \frac{-\sin \phi z_1
-\sin \phi z_2
+\sin \phi}
{-\sin(\theta+\phi)z_1
-\sin (\theta+\phi)e^{-i \theta}z_2
+\sin \phi +\sin \theta e^{-i \phi}}, \label{w2}
\end{align}
with inverses
\begin{align}
z_1&= \frac{-\sin \theta e^{-i \phi}w_1
-(\sin \phi +\sin (\theta-\phi))w_2
+\sin \phi +\sin(\theta-\phi)}
{-\sin(\theta+\phi)w_1
-\sin (\theta+\phi)w_2
+\sin \phi +\sin \theta e^{i \phi}}, \label{z1} \\
z_2&= \frac{-\sin \phi e^{i \theta} w_1
-\sin \phi w_2
+\sin \phi e^{i \theta}}
{-\sin(\theta+\phi)w_1
-\sin (\theta+\phi)w_2
+\sin \phi +\sin \theta e^{i \phi}}. \label{z2}
\end{align}

The new set of coordinates makes it easier to describe the polyhedron, that will be defined by imposing that the arguments of the coordinates $z_1,z_2,w_1,w_2$ vary in a certain range.

We will often consider another transformation, which is the antiholomorphic isometry $\iota$ defined by $\iota(\textbf{z})=R_1R_2R_1 (\overline{\textbf{z}})$.
Equivalently, $\iota(\textbf{z})=PR_1(\overline{\textbf{z}})$. 
By definition, 
\begin{equation} \label{iota}
\iota \begin{bmatrix} z_1 \\ z_2 \\1 \end{bmatrix}
\sim \begin{bmatrix}
\overline w_1 \\ \overline w_2 e^{i \theta} \\1
\end{bmatrix}.
\end{equation}
This transformation will give us a symmetry of the polyhedron that we will construct (see Lemma \ref{iotaaction}). 

\begin{rk}\label{iotaconjug}
A simple computation shows that $\iota$ is consistent with the maps defined previously. 
In other words, we have
\begin{align*}
J \iota&=\iota J^{-1}, & P \iota &= \iota P^{-1} &
R_1 \iota &= \iota R_2^{-1} & R_2 \iota &= \iota R_1^{-1}.
\end{align*}
\end{rk}

\section{The polyhedron}\label{constr}

In this section we will construct the polyhedron that we will later prove to be a fundamental domain for the action of $\Gamma$. 
This is a general construction which contains all cases of lattices with three fold symmetry on Deligne and Mostow's list. 
The polyhedron as we will describe it here will be a fundamental domain in some of the cases described in Section \ref{list}.
In the other cases, the fundamental polyhedron will be obtained from this one by collapsing some triplets of vertices. 
Section \ref{degenerate} will be dedicated to the analysis of these cases. 

\subsection{Vertices}

We will now explain which points of $\hc$ are the special points which will represent the vertices of the polyhedron. 
For each of them we will give both $\textbf z$-coordinates and $\textbf w$-coordinates.
As before, $\textbf w= P^{-1}(\textbf z)$.

All these points will be obtained by making some cone points approach, until, in the limit, they coalesce. 
In this case, each vertex will be obtained by separately coalescing two distinct pairs of cone points. 
On the octagon $\Pi$, this corresponds to fixing the triangle $T_3$ and considering the cone metrics on the sphere corresponding to configurations when $T_1$ and $T_2$ are as small and as big as possible, in different directions, until pairs of vertices coincide. 
This is shown in Figure \ref{vertices}.
Every time that we make two points coalesce, we turn two cone points into a new one. 
Its curvature (complement of the cone angle), will be the sum of the curvatures of the two points that have coalesced.

\begin{figure}
\centering
\includegraphics[width=1\textwidth]{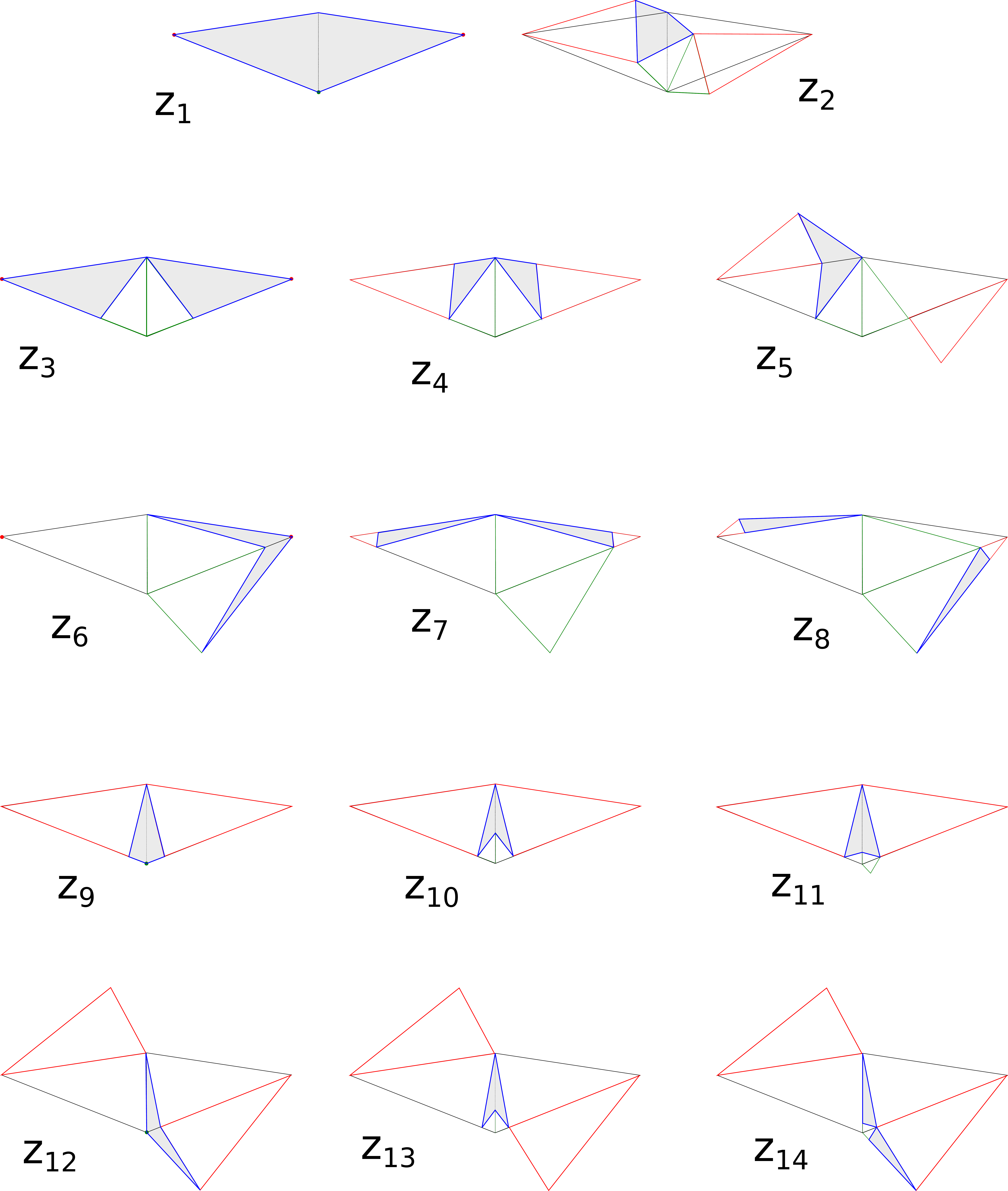}
\begin{quote}\caption{The degenerate configurations giving the vertices of the polyhedron. \label{vertices}} \end{quote}
\end{figure}

In the following tables we describe the vertices of the polyhedron.
The first one tells us, for each vertex, which cone points coalesced. 

\begin{center}
\begin{tabular}{|c|c|c||c|c|c||c|c|c|}
\hline
Vert. & \multicolumn{2}{|c||}{Cone points}  & Vert. & \multicolumn{2}{|c||}{Cone points} & Vert. & \multicolumn{2}{|c|}{Cone points}\\
\hline
$\textbf{z}_1$ & $v_0, v_{\pm 1}$ & $v_{\pm 2}, v_{\pm 3}$ & 
$\textbf{z}_6$ & $v_*,v_{\pm1} $ & $v_{\pm 2}, v_{\pm 3}$ &
$\textbf{z}_{11}$ & $v_*,v_{\pm3} $ & $v_0, v_{\pm 2}$ \\
$\textbf{z}_2$ & $v_0, v_{\pm 3}$ & $v_{\pm 1}, v_{\pm 2}$ &
$\textbf{z}_7$ & $v_*,v_{\pm1} $ & $v_0, v_{\pm 2}$ & 
$\textbf{z}_{12}$ & $v_*,v_{\pm2} $ & $v_0, v_{\pm 1}$ \\
$\textbf{z}_3$ & $v_*, v_0$ & $v_{\pm 2}, v_{\pm 3}$ &
$\textbf{z}_8$ & $v_*,v_{\pm1} $ & $v_0, v_{\pm 3}$ & 
$\textbf{z}_{13}$ & $v_*,v_{\pm2} $ & $v_{\pm1}, v_{\pm 3}$ \\
$\textbf{z}_4$ & $v_*,v_0 $ &  $v_{\pm 1}, v_{\pm 2}$ & 
$\textbf{z}_9$ & $v_*,v_{\pm3} $ & $v_0, v_{\pm 1}$ & 
$\textbf{z}_{14}$ & $v_*,v_{\pm2} $ & $v_0, v_{\pm 3}$ \\
$\textbf{z}_5$ & $v_*,v_0 $ & $v_{\pm 1}, v_{\pm 3}$ & 
$\textbf{z}_{10}$ & $v_*,v_{\pm3} $ & $v_{\pm1}, v_{\pm 2}$ &&&\\
\hline
\end{tabular}
\end{center}


When two cone points collapse, we get a complex line in $\hc$.
We will label these lines in the following way:
\[
L_{ij}= \text{ line obtained by making the cone points $v_i,v_j$ coalesce},
\]
for $i,j=0,1,2,3,*$.
We will also call $\textbf n_{ij}$ the polar vector to the line $L_{ij}$.
These complex lines are described by the following equations.

\begin{center}
\begin{tabular}{|c|c|c|c|}
\hline
$L_{ij}$ & Cone pts 
& $\textbf{z}$-coordinates equation
& $\textbf{w}$-coordinates equation \\
\hline

$L_{*0}$ & $v_*,v_0$
& $z_1= \frac{\sin \phi+ \sin(\theta - \phi)}{\sin (\theta +\phi)}$ 
& $w_1=\frac{\sin \phi + \sin (\theta - \phi)}{\sin(\theta+\phi)}$ \\

$L_{*1}$ & $v_*,v_{-1}$
& $z_1=e^{-i\phi}\frac{\sin \theta}{\sin(\theta +\phi)}$
& $w_2=e^{i \theta} \frac{\sin \phi}{\sin(\theta+\phi)}$\\

$L_{*2}$ & $v_*,v_{-2}$
& $z_2=e^{i \theta} \frac{\sin \phi}{\sin(\theta+\phi)}$
& $w_2=\frac{\sin \phi}{\sin(\theta+\phi)}$\\

$L_{*3}$ & $v_*,v_3$
& $z_2=\frac{\sin \phi}{\sin(\theta+\phi)}$
& $w_1=e^{i \phi}\frac{\sin \theta}{\sin(\theta +\phi)}$\\

$L_{01}$ & $v_0,v_1$
& $z_1=0$
& $\frac{\sin \theta}{\sin \phi+ \sin (\theta-\phi)} e^{-i \phi}w_1+w_2=1$\\

$L_{02}$ & $v_0,v_2$
& $\frac{\sin \theta}{\sin \phi+ \sin (\theta-\phi)}e^{i \phi}z_1+z_2=1$
& $\frac{\sin \theta}{\sin \phi+ \sin (\theta-\phi)} e^{-i \phi}w_1+e^{-i\theta}w_2=1$\\

$L_{03}$ & $v_0,v_3$
& $\frac{\sin \theta}{\sin \phi+ \sin (\theta-\phi)}e^{i \phi}z_1+e^{-i \theta}z_2=1$
& $w_1=0$\\

$L_{12}$ & $v_1,v_2$
& $z_1+z_2=1$
& $w_2=0$\\

$L_{23}$ & $v_2,v_3$
& $z_2=0$
& $w_1+e^{-i \theta}w_2=1$\\

$L_{13}$ & $v_1,v_3$
& $z_1+e^{-i \theta}z_2=1$
& $w_1+w_2=1$\\
\hline
\end{tabular}
\end{center}

With these equations, we can calculate the coordinates of the vertices by making the complex lines intersect or, equivalently, two pairs of points coalesce at the same time.
The first table will give us the $\textbf{z}$ coordinates of all the vertices, while the second one will give us their $\textbf{w}$ coordinates.

\begin{center}
\begin{tabular}{|c|c|c|c|}
\hline
Vertex 
& coordinate $z_1$ 
&  coordinate $z_2$ \\
\hline
$\textbf{z}_1$ 
& $0$ 
& $0$\\

$\textbf{z}_2$ 
& $ \frac{\sin \phi+ \sin(\theta - \phi)}{\sin \phi +e^{i \phi} \sin \theta}$
& $\frac{ e^{i \theta} \sin \phi}{\sin \phi + e^{i \phi} \sin \theta}$\\

$\textbf{z}_3$ 
& $ \frac{\sin \phi+ \sin(\theta - \phi)}{\sin (\theta +\phi)}$ 
& $0$ \\

$\textbf{z}_4$ 
& $ \frac{\sin \phi+ \sin(\theta - \phi)}{\sin (\theta +\phi)}$ 
& $\frac{\sin \phi (2 \cos \theta -1)}{\sin(\theta+\phi)}$ \\

$\textbf{z}_5$ 
& $ \frac{\sin \phi+ \sin(\theta - \phi)}{\sin (\theta +\phi)}$ 
& $ e^{i \theta}\frac{\sin \phi (2 \cos \theta -1)}{\sin(\theta+\phi)}$ \\

$\textbf{z}_6$ 
& $e^{-i\phi}\frac{\sin \theta}{\sin(\theta +\phi)}$
& $0$\\

$\textbf{z}_7$ 
&  $e^{-i\phi}\frac{\sin \theta}{\sin(\theta +\phi)}$ 
& $ 1-\frac{\sin^2 \theta}{\sin(\theta+\phi)(\sin \phi+ \sin(\theta-\phi))}$\\

$\textbf{z}_8$ 
& $e^{-i \phi} \frac{\sin \theta}{\sin(\theta+\phi)}$ 
& $e^{i \theta} \left( 1-\frac{\sin^2 \theta}{\sin(\theta+\phi)(\sin \phi+ \sin(\theta-\phi))}\right)$ \\

$\textbf{z}_9$ 
& $0$ 
& $\frac{\sin \phi}{\sin(\theta+\phi)}$ \\

$\textbf{z}_{10}$ 
& $\frac{\sin(\theta+\phi) -\sin \phi}{\sin(\theta+\phi)}$
& $\frac{\sin \phi}{\sin(\theta+\phi)}$ \\

$\textbf{z}_{11}$ 
& $e^{-i\phi}\frac{\sin \phi+\sin(\theta-\phi)}{\sin \theta} \left( 1- \frac{\sin \phi}{\sin (\theta+\phi)}\right)$
& $\frac{\sin \phi}{\sin(\theta+\phi)}$ \\

$\textbf{z}_{12}$ 
& $0$
& $e^{i \theta} \frac{\sin \phi}{\sin(\theta+\phi)}$ \\

$\textbf{z}_{13}$ 
& $\frac{\sin(\theta+\phi) -\sin \phi}{\sin(\theta+\phi)}$
& $e^{i \theta} \frac{\sin \phi}{\sin(\theta+\phi)}$\\

$\textbf{z}_{14}$ 
& $e^{-i\phi}\frac{\sin \phi+\sin(\theta-\phi)}{\sin \theta} \left( 1- \frac{\sin \phi}{\sin (\theta+\phi)}\right)$
& $e^{i \theta} \frac{\sin \phi}{\sin(\theta+\phi)}$\\
\hline
\end{tabular}
\end{center}

\begin{center}
\begin{tabular}{|c|c|c|c|}
\hline
Vertex 
& coordinate $w_1$ 
&  coordinate $w_2$ \\
\hline
$\textbf{z}_1$ 
& $\frac{\sin \phi + \sin (\theta - \phi)}{\sin \phi + e^{-i \phi} \sin \theta}$
& $\frac{ \sin \phi}{\sin \phi +e^{-i \phi} \sin \theta}$ \\

$\textbf{z}_2$ 
& $0$ 
& $0$ \\

$\textbf{z}_3$ 
& $\frac{\sin \phi + \sin (\theta - \phi)}{\sin(\theta+\phi)}$  
& $e^{i \theta} \frac{\sin \phi (2 \cos \theta -1)}{\sin(\theta+\phi)} $  \\

$\textbf{z}_4$ 
& $\frac{\sin \phi + \sin (\theta - \phi)}{\sin(\theta+\phi)}$  
& $0$\\

$\textbf{z}_5$ 
& $\frac{\sin \phi + \sin (\theta - \phi)}{\sin(\theta+\phi)}$  
& $\frac{\sin \phi (2 \cos \theta -1)}{\sin(\theta+\phi)}$\\

$\textbf{z}_6$ 
& $\frac{\sin(\theta+\phi) -\sin \phi}{\sin (\theta+\phi)}$ 
& $e^{i \theta} \frac{\sin \phi}{\sin(\theta+\phi)}$\\

$\textbf{z}_7$ 
& $e^{i \phi} \frac{\sin \phi+\sin(\theta-\phi)}{\sin \theta}\left( 1- \frac{\sin \phi}{\sin (\theta+\phi)}\right)$ 
&  $e^{i \theta} \frac{\sin \phi}{\sin(\theta+\phi)}$\\

$\textbf{z}_8$ 
& $0$
&  $e^{i \theta} \frac{\sin \phi}{\sin(\theta+\phi)}$\\

$\textbf{z}_9$ 
& $e^{i \phi} \frac{\sin \theta}{\sin(\theta+\phi)}$
& $ 1-\frac{\sin^2 \theta}{\sin(\theta+\phi)(\sin \phi+ \sin(\theta-\phi))}$ \\

$\textbf{z}_{10}$ 
& $e^{i \phi} \frac{\sin \theta}{\sin(\theta+\phi)}$
& $0$\\

$\textbf{z}_{11}$ 
& $e^{i \phi} \frac{\sin \theta}{\sin(\theta+\phi)}$
& $e^{i \theta} \left(1-\frac{\sin^2 \theta}{\sin(\theta+\phi)(\sin \phi+ \sin(\theta-\phi))}\right)$\\

$\textbf{z}_{12}$ 
& $e^{i \phi} \frac{\sin \phi+\sin(\theta-\phi)}{\sin \theta}\left( 1- \frac{\sin \phi}{\sin (\theta+\phi)}\right)$ 
& $\frac{\sin \phi}{\sin(\theta+\phi)}$\\

$\textbf{z}_{13}$ 
& $\frac{\sin(\theta+\phi)-\sin \phi}{\sin(\theta+\phi)}$
& $\frac{\sin \phi}{\sin(\theta+\phi)}$\\

$\textbf{z}_{14}$ 
& $0$
& $\frac{\sin \phi}{\sin(\theta+\phi)}$\\
\hline
\end{tabular}
\end{center}

These vertices present a symmetry given by the transformation $\iota$. 
In fact, as we can immediately verify on the coordinates in the table, the following lemma holds:

\begin{lemma}\label{iotaaction}
The isometry $\iota$ defined by \eqref{iota} has order 2 and acts on the vertices in the following way:
\emph{
\begin{align*}
\iota( \textbf{z}_1) &= \textbf{z}_2, 
& \iota( \textbf{z}_3)&= \textbf{z}_4,
& \iota( \textbf{z}_5)&= \textbf{z}_5, 
& \iota( \textbf{z}_6)&= \textbf{z}_{10}, \\
\iota( \textbf{z}_7)&= \textbf{z}_{11}, 
& \iota( \textbf{z}_8)&= \textbf{z}_9,
& \iota( \textbf{z}_{12})&= \textbf{z}_{14}, 
& \iota( \textbf{z}_{13})&= \textbf{z}_{13}.
\end{align*}}
\end{lemma}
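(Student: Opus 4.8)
The plan is to verify the statement by a direct computation, relying on the explicit descriptions already set up. First I would establish that $\iota$ has order $2$. Since $\iota(\textbf{z}) = PR_1(\overline{\textbf{z}})$, its square is $\iota^2(\textbf{z}) = PR_1\overline{PR_1(\overline{\textbf{z}})} = PR_1\,\overline{PR_1}\,(\textbf{z})$. The matrices $P = R_1R_2$ and $R_1$ have explicit entries given earlier, so $\overline{PR_1}$ is obtained by conjugating those entries, and one checks that $PR_1\,\overline{PR_1}$ is a scalar multiple of the identity, i.e.\ the identity in $PU(H)$. Alternatively, and more in keeping with the spirit of the paper, one can use Remark \ref{iotaconjug}: from $R_1\iota = \iota R_2^{-1}$ and $R_2 \iota = \iota R_1^{-1}$ one gets $P\iota = R_1R_2\iota = R_1\iota R_1^{-1} = \iota R_2^{-1}R_1^{-1} = \iota P^{-1}$, and then $\iota^2 = PR_1\overline{PR_1}$ can be simplified using these intertwining relations together with the fact that $R_1, R_2$ are unitary, until it collapses to a scalar; I would present whichever of these two routes is shortest. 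It is also worth noting $\iota$ is genuinely antiholomorphic and nontrivial, so its order is exactly $2$, not $1$.

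Next I would verify the action on vertices. The cleanest way is to use formula \eqref{iota}: in $\textbf{z}$-coordinates normalised with $z_3 = 1$, if a vertex has $\textbf{w}$-coordinates $(w_1, w_2)$ then $\iota$ of it has $\textbf{z}$-coordinates $(\overline{w_1}, \overline{w_2}e^{i\theta})$. So for each vertex $\textbf{z}_m$ I read off $(w_1, w_2)$ from the second table, form $(\overline{w_1}, \overline{w_2}e^{i\theta})$, and check against the first table that this equals the $\textbf{z}$-coordinates of the claimed image $\textbf{z}_n$. For instance, $\textbf{z}_1$ has $w_1 = \frac{\sin\phi + \sin(\theta-\phi)}{\sin\phi + e^{-i\phi}\sin\theta}$ and $w_2 = \frac{\sin\phi}{\sin\phi + e^{-i\phi}\sin\theta}$; conjugating gives $\overline{w_1} = \frac{\sin\phi+\sin(\theta-\phi)}{\sin\phi + e^{i\phi}\sin\theta}$ and $\overline{w_2}e^{i\theta} = \frac{e^{i\theta}\sin\phi}{\sin\phi + e^{i\phi}\sin\theta}$, which are exactly the $z_1, z_2$ coordinates of $\textbf{z}_2$. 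The remaining twelve cases (seven pairs swapped, plus the two fixed points $\textbf{z}_5$ and $\textbf{z}_{13}$) are checked the same way; for the fixed points one verifies the conjugation-and-twist operation returns the same coordinates, which amounts to checking that the relevant entries are real after the twist.

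As a sanity cross-check that the combinatorial pattern is forced rather than coincidental, I would observe that the vertex pairing is consistent with the coalescing data in the first descriptive table: $\iota$ is built from the moves that swap the cone points $v_1, v_2, v_3$ among themselves while fixing $v_0$ and $v_*$ and conjugating, so it must send the degenerate configuration defining $\textbf{z}_m$ to another degenerate configuration obtained by a permutation of $\{v_1, v_2, v_3\}$ combined with complex conjugation (which interchanges $v_i \leftrightarrow v_{-i}$); matching these up predicts exactly the listed pairing, so the coordinate computation is merely confirming it. The main obstacle here is purely bookkeeping: there is no conceptual difficulty, only the need to carry the trigonometric identities (chiefly $\sin(\theta+\phi)$, $\sin(\theta-\phi)$ expansions and the relation $\sin\phi + \sin(\theta-\phi) = 2\sin\frac{\theta}{2}\cos\frac{\theta-2\phi}{2}$) carefully through fourteen cases so that conjugation visibly produces the table entries. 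I would do one representative swap and one representative fixed point in detail and state that the others are identical, which is the standard level of detail for such a verification lemma.
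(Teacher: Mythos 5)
Your proposal is correct and matches the paper's approach: the paper offers no separate proof beyond the remark that the lemma "can immediately be verified on the coordinates in the table," which is exactly your strategy of applying \eqref{iota} to the $\textbf{w}$-coordinate table entries and comparing with the $\textbf{z}$-coordinate table. Your additional verification of the order-2 claim via $PR_1\overline{PR_1}$ being a scalar, and the cross-check against the coalescing cone-point data, are both sound and slightly more explicit than what the paper records.
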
 

\subsection{The polyhedron and its sides}\label{sides}

In this section we will construct a polyhedron $D$ in complex hyperbolic space.
Later on, in Section \ref{poincare}, we will prove that this is a fundamental polyhedron for the group $\Gamma$ we are considering. 
The degenerate configurations of cone points on the sphere described in the previous section will indeed be the vertices of the polyhedron $D$. 

On the boundary of the polyhedron we have cells of different dimensions. 
The codimension 1 cells (3-dimensional cells) are called \emph{sides}. 
The 2-dimensional cells are called \emph{ridges} and the 1-dimensional are the \emph{edges}.
The \emph{vertices} are the 0-dimensional cells in the boundary of the polyhedron.
The sides of the polyhedron will be contained in bisectors, described in Section \ref{bisectors}. 

As we can easily see just by looking at the tables, if we consider one column of the first or second coordinates table (i.e. fixing one of $z_1,z_2,w_1,w_2$), most vertices have that particular coordinate either real or a real number multiplied by a unit complex number of the same argument along the column (respectively $e^{-i \phi}, e^{i \theta}, e^{i \phi},e^{i \theta}$).
More specifically, the only ones not following this rule are $\textbf{z}_1$ for the $\textbf{w}$-coordinates and $\textbf{z}_2$ for the $\textbf{z}$-coordinates.
This makes it natural to consider the portion of complex hyperbolic space consisting of all points with arguments of the coordinates included in the ranges bounded by these values. 
In fact, the two that, as we said, do not follow this rule, are still within the bounded ranges (even if strictly in the interior).
We hence define our polyhedron to be such region, in the following way:
\begin{equation} \label{polyhedron}
D= \left\{ \textbf{z}=P(\textbf{w}) \colon
\begin{array}{l l}
\arg(z_1) \in (-\phi,0), & \arg(z_2) \in (0,\theta),\\
\arg(w_1) \in (0,\phi), & \arg(w_2) \in (0,\theta)
\end{array} \right\}.
\end{equation}


The sides of the polyhedron will then be contained in bisectors, which are defined as in the following table.

\begin{center}
\begin{tabular}{|c|c|c|}
\hline
Bisector & Equation & Points in the bisector \\
\hline
$B(P)$ & $\im (z_1)=0$ 
& $\textbf{z}_1, \textbf{z}_3,\textbf{z}_4, \textbf{z}_5, \textbf{z}_{9}, \textbf{z}_{10}, \textbf{z}_{12}, \textbf{z}_{13}$ \\
$B(P^{-1})$ & $\im (w_1)=0 $
& $\textbf{z}_2, \textbf{z}_3,\textbf{z}_4, \textbf{z}_5, \textbf{z}_{6}, \textbf{z}_{8}, \textbf{z}_{13}, \textbf{z}_{14}$ \\
$B(J)$ & $\im (e^{i \phi} z_1)=0$ 
& $\textbf{z}_1, \textbf{z}_6,\textbf{z}_7, \textbf{z}_8, \textbf{z}_{9}, \textbf{z}_{11}, \textbf{z}_{12}, \textbf{z}_{14}$ \\
$B(J^{-1})$ & $\im (e^{-i \phi} w_1)=0 $
& $\textbf{z}_2, \textbf{z}_7,\textbf{z}_8, \textbf{z}_9, \textbf{z}_{10}, \textbf{z}_{11}, \textbf{z}_{12}, \textbf{z}_{14}$\\
$B(R_1)$ & $\im (z_2)=0$ 
& $\textbf{z}_1, \textbf{z}_3,\textbf{z}_4, \textbf{z}_6, \textbf{z}_{7}, \textbf{z}_{9}, \textbf{z}_{10}, \textbf{z}_{11} $\\
$B(R_1^{-1})$ & $\im (e^{-i \theta} z_2)=0$ 
& $\textbf{z}_1, \textbf{z}_3,\textbf{z}_5, \textbf{z}_6, \textbf{z}_{8}, \textbf{z}_{12}, \textbf{z}_{13}, \textbf{z}_{14}$\\
$B(R_2)$ & $\im (w_2)=0 $
& $\textbf{z}_2, \textbf{z}_4,\textbf{z}_5, \textbf{z}_9, \textbf{z}_{10}, \textbf{z}_{12}, \textbf{z}_{13}, \textbf{z}_{14}$\\
$B(R_2^{-1})$ & $\im (e^{-i \theta} w_2)=0 $
& $\textbf{z}_2, \textbf{z}_3,\textbf{z}_4, \textbf{z}_6, \textbf{z}_{7}, \textbf{z}_{8}, \textbf{z}_{10}, \textbf{z}_{11}$\\
\hline
\end{tabular}
\end{center}

The choice of the name of the bisectors has been made in such a way that the bisector $B(T)$ is sent by $T$ to the bisector $B(T^{-1})$, for $T \in \{ P,P^{-1},J, J^{-1}, R_1, R_1^{-1}, R_2, R_2^{-1}\}$. 

Finally, the following lemma proves that the subspaces defined are bisectors and that we named them following the convention just described. 

\begin{lemma}\label{lemmapoly}
In \emph{$\textbf{z}$} and \emph{$\textbf{w}$} coordinates, we have 
\begin{itemize}
\item $\im(z_1)<0$ if and only if \emph{
$\lvert \langle \textbf{z}, \textbf n_{*1} \rangle \rvert 
< \lvert \langle \textbf{z}, P^{-1}(\textbf n_{*3}) \rangle \rvert$},
\item $\im(w_1)>0$ if and only if \emph{
$\lvert \langle \textbf{w}, \textbf n_{*3} \rangle \rvert 
< \lvert \langle \textbf{w}, P(\textbf n_{*1}) \rangle \rvert$},
\item $\im(e^{i \phi} z_1)>0$ if and only if \emph{
$\lvert \langle \textbf{z}, \textbf n_{*0} \rangle \rvert 
< \lvert \langle \textbf{z}, J^{-1}(\textbf n_{*0}) \rangle \rvert$},
\item $\im(e^{-i \phi} w_1)<0$ if and only if \emph{
$\lvert \langle \textbf{w}, \textbf n_{*0} \rangle \rvert 
< \lvert \langle \textbf{w}, J(\textbf n_{*0}) \rangle \rvert$},
\item $\im(z_2)>0$ if and only if \emph{
$\lvert \langle \textbf{z}, \textbf n_{*2} \rangle \rvert 
< \lvert \langle \textbf{z}, R_1^{-1}(\textbf n_{*3}) \rangle \rvert$},
\item $\im(e^{-i \theta} z_2)<0$ if and only if \emph{
$\lvert \langle \textbf{z}, \textbf n_{*3} \rangle \rvert 
< \lvert \langle \textbf{z}, R_1(\textbf n_{*2}) \rangle \rvert$},
\item $\im(w_2)>0$ if and only if \emph{
$\lvert \langle \textbf{w}, \textbf n_{*1} \rangle \rvert 
< \lvert \langle \textbf{w}, R_2^{-1}(\textbf n_{*2}) \rangle \rvert$},
\item $\im(e^{-i \theta} w_2)<0$ if and only if \emph{
$\lvert \langle \textbf{w}, \textbf n_{*2} \rangle \rvert 
< \lvert \langle \textbf{w}, R_2(\textbf n_{*1}) \rangle \rvert$}.
\end{itemize}
\end{lemma}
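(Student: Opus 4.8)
The plan is to establish all eight biconditionals by a direct computation in coordinates; the same computation also shows that the loci $B(P),\dots,B(R_2^{-1})$ are genuine bisectors and that the labelling respects the convention stated just before the lemma. First I would read off, from the table of complex lines, the polar vector of each $L_{ij}$: if $L_{ij}$ is cut out by the linear equation $\langle\mathbf z,\mathbf n_{ij}\rangle=\mathbf c^{T}\mathbf z=0$, then $\mathbf n_{ij}=H^{-1}\overline{\mathbf c}$, and I would fix once and for all a normalisation of the $\mathbf n_{ij}$ so that $\langle\mathbf n_{ij},\mathbf n_{ij}\rangle$ takes one common (negative) value. Using the explicit matrices of $P=R_1R_2$, $J=R_1R_2A_1$, $R_1$ and $R_2$ from Section~\ref{constr}, I would then compute the eight auxiliary vectors occurring on the right-hand sides, namely $P^{-1}(\mathbf n_{*3})$, $P(\mathbf n_{*1})$, $J^{-1}(\mathbf n_{*0})$, $J(\mathbf n_{*0})$, $R_1^{-1}(\mathbf n_{*3})$, $R_1(\mathbf n_{*2})$, $R_2^{-1}(\mathbf n_{*2})$ and $R_2(\mathbf n_{*1})$.

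The heart of the argument is, for each item, to expand the difference $\lvert\langle\mathbf z,\mathbf n_1\rangle\rvert^{2}-\lvert\langle\mathbf z,\mathbf n_2\rangle\rvert^{2}=\mathbf z^{*}H(\mathbf n_1\mathbf n_1^{*}-\mathbf n_2\mathbf n_2^{*})H\mathbf z$ as a real quadratic form in $z_1,z_2,z_3$ (respectively $w_1,w_2,w_3$) and to check that all the ``diagonal'' terms $\lvert z_i\rvert^{2}$ and all the Hermitian cross-terms cancel, leaving exactly a nonzero real constant times $\im(z_1\overline z_3)$ in the first case, and correspondingly $\im(w_1\overline w_3)$, $\im(e^{i\phi}z_1\overline z_3)$, $\im(e^{-i\phi}w_1\overline w_3)$, $\im(z_2\overline z_3)$, $\im(e^{-i\theta}z_2\overline z_3)$, $\im(w_2\overline w_3)$ and $\im(e^{-i\theta}w_2\overline w_3)$ in the other seven; setting $z_3=1$ (resp. $w_3=1$) this is precisely the linear expression appearing in the statement. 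Since $\langle\mathbf z,\mathbf z\rangle>0$ on $\hc$, once the cancellation is done the sign of the surviving constant — which I would pin down by evaluating at a single vertex lying off the locus, e.g. $\mathbf z_2$ for the first item — fixes both the equivalence and the direction of the inequality, and equality recovers the defining equation of the corresponding side listed in the table of bisectors. The cancellations reduce to elementary identities among $\sin\theta$, $\sin\phi$ and $\sin(\theta\pm\phi)$; moreover the eight cases are entirely analogous, and several are related by the change of coordinates $\mathbf w=P^{-1}(\mathbf z)$ and by the antiholomorphic symmetry $\iota$ of Remark~\ref{iotaconjug}, so it is enough to carry one or two of them out in full.

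It remains to note that the right-hand loci are bisectors and that $T(B(T))=B(T^{-1})$. With the chosen normalisation, $H$-unitarity of the moves gives $\langle\mathbf n_1,\mathbf n_1\rangle=\langle\mathbf n_2,\mathbf n_2\rangle$ in each of the eight pairs — for instance $\langle P^{-1}(\mathbf n_{*3}),P^{-1}(\mathbf n_{*3})\rangle=\langle\mathbf n_{*3},\mathbf n_{*3}\rangle=\langle\mathbf n_{*1},\mathbf n_{*1}\rangle$ — so each locus $\{\mathbf z:\lvert\langle\mathbf z,\mathbf n_1\rangle\rvert=\lvert\langle\mathbf z,\mathbf n_2\rangle\rvert\}$ is a bisector exactly as in Section~\ref{bisectors} (its complex spine being allowed to lie outside $\hc$). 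Finally each move $T$ carries the unordered pair of polar vectors defining $B(T)$ to the one defining $B(T^{-1})$ — $P$ sends $\{\mathbf n_{*1},P^{-1}(\mathbf n_{*3})\}$ to $\{P(\mathbf n_{*1}),\mathbf n_{*3}\}$, $J$ sends $\{\mathbf n_{*0},J^{-1}(\mathbf n_{*0})\}$ to $\{J(\mathbf n_{*0}),\mathbf n_{*0}\}$, $R_1$ sends $\{\mathbf n_{*2},R_1^{-1}(\mathbf n_{*3})\}$ to $\{R_1(\mathbf n_{*2}),\mathbf n_{*3}\}$, and similarly for $R_2$ — whence $T(B(T))=B(T^{-1})$. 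The only real obstacle is bookkeeping: propagating the chosen normalisation of the $\mathbf n_{ij}$ and the correct ``$\pm$'' copies of the cone points through the computation so that the vectors appearing really are the stated ones and not merely scalar multiples, and confirming that every unwanted term in the quadratic form genuinely drops out.
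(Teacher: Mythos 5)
Your proposal is correct and follows essentially the same route as the paper: the paper also proves the lemma by writing down the explicit polar vectors (e.g.\ $\textbf n_{*1}$ and $P^{-1}(\textbf n_{*3})$), computing the two inner products against $\textbf z$, and observing that their moduli differ only through a term $\re(e^{\pm i\phi}z_1)$, so that the comparison reduces to the sign of $\im(z_1)$ --- exactly the cancellation of diagonal and cross terms you describe, with one case done in full and the others declared analogous. Your additional remarks on normalising the $\textbf n_{ij}$, on the loci being genuine bisectors, and on $T(B(T))=B(T^{-1})$ match what the paper records in the discussion surrounding the lemma, so there is no gap.
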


The proof of the lemma goes simply by calculation. 
We will show just the first case and the others ones are done in similar ways. 
This is very similar as the proof of the equivalent statement for Livné lattices in \cite{livne}.

\begin{proof}
Let's take 
\[
\textbf n_{*1}=\begin{bmatrix}
e^{-i \phi} \frac{\sin \phi +\sin(\theta -\phi)}{\sin \theta} \\0 \\1
\end{bmatrix}
\]
and
\[
P^{-1}(\textbf n_{*3})=P^{-1}\left(\begin{bmatrix}
0 \\1 \\1
\end{bmatrix}\right)=
\begin{bmatrix}
e^{i \phi} \frac{\sin \phi +\sin(\theta -\phi)}{\sin \theta} \\0 \\1
\end{bmatrix}.
\]
It is immediate to verify that $\textbf n_{*1}$ is a vector normal to $L_{*1}$ and $\textbf n_{*3}$ is normal to $L_{*3}$.

Furthermore, we have 
\[
\lvert \langle \textbf{z}, \textbf n_{*1} \rangle \rvert=
\left\lvert -\sin \phi e^{i \phi}z_1+\frac{\sin \phi \sin \theta}{\sin(\theta+\phi)} \right\rvert
\]
and
\[
\lvert \langle \textbf{z}, P^{-1}(\textbf n_{*3}) \rangle \rvert=
\left\lvert -\sin \phi e^{-i \phi}z_1+\frac{\sin \phi \sin \theta}{\sin(\theta+\phi)} \right\rvert.
\]

So, to have 
$\lvert \langle \textbf{z}, \textbf n_{*1} \rangle \rvert < \lvert \langle \textbf{z}, P^{-1}(\textbf n_{*3}) \rangle \rvert$,
we need to have a point which verifies $-\re(e^{i \phi}z_1)<-\re(e^{-i \phi}z_1)$, which is equivalent to require that $\im(z_1)<0$.
\end{proof}

\begin{rk}\label{polydef}
By definition, a point is in the polyhedron $D$ if and only if it satisfies all the conditions on the left hand side in the lemma.
\end{rk}

As we mentioned, the lemma explains the name given to the bisectors. 
In fact, for example the bisector $B(P)$ is, by definition, given by $\im(z_1)=0$, which corresponds, by the lemma, to the points satisfying
\[
\lvert \langle \textbf{z}, \textbf n_{*1} \rangle \rvert 
= \lvert \langle \textbf{z}, P^{-1}(\textbf n_{*3}) \rangle \rvert.
\]
Applying $P$ to both sides of the equality, we get a point in the bisector defined by 
\[
\lvert \langle \textbf{w}, P(\textbf n_{*1}) \rangle \rvert 
=\lvert \langle \textbf{w}, \textbf n_{*3} \rangle \rvert,
\]
which is indeed $B(P^{-1})$.
The sides of the polyhedron are contained in the bisectors. 
We will define the side $S(T)$ to be the one contained in the bisector $B(T)$ and it will be obtained by intersecting it with $\overline D$. 

\subsection{Ridges and edges of the polyhedron}

\subsubsection{Useful inequalities}

In this section we will present some trigonometric inequalities that will be used all through the following sections. 
Some of them are equivalent to the inequalities found in \cite{livne} and \cite{boadiparker}.

\begin{lemma}\label{lemmasquare}
Let \emph{$\textbf{z} \in \hc$}. 
Then
\begin{align*}
\lvert z_1 \rvert^2, \lvert w_1 \rvert^2 &\leq \frac{\sin \phi+\sin(\theta-\phi)}{\sin(\theta+\phi)}, 
& \text{and} & &
\lvert z_2 \rvert^2, \lvert w_2 \rvert^2 &\leq \frac{\sin \phi}{\sin(\theta+\phi)}.
\end{align*}
\end{lemma}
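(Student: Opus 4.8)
The plan is to bound each of the four coordinates in terms of the fact that $\textbf z \in \hc$, i.e.\ $\langle \textbf z, \textbf z\rangle = \textbf z^* H \textbf z > 0$. Recall from \eqref{hypspace} that this positivity reads
\[
\frac{\sin \theta \sin \phi}{\sin(\theta+\phi)} \;>\; \frac{\lvert z_1 \rvert^2 \sin \theta \sin \phi}{\sin \phi + \sin(\theta-\phi)} \;+\; \lvert z_2 \rvert^2 \sin \theta.
\]
Since every term on the right-hand side is non-negative (the coefficients $\sin\theta$, $\sin\phi$, $\sin\phi+\sin(\theta-\phi)$, $\sin(\theta+\phi)$ are all positive for the range of $\theta,\phi$ we consider), dropping the $\lvert z_2\rvert^2$ term gives $\lvert z_1\rvert^2 \sin\theta\sin\phi/(\sin\phi+\sin(\theta-\phi)) < \sin\theta\sin\phi/\sin(\theta+\phi)$, hence $\lvert z_1\rvert^2 < (\sin\phi+\sin(\theta-\phi))/\sin(\theta+\phi)$; similarly dropping the $\lvert z_1\rvert^2$ term yields $\lvert z_2\rvert^2 < \sin\phi/\sin(\theta+\phi)$. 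This establishes the two bounds for the $\textbf z$-coordinates (with strict inequality, hence certainly $\le$).

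For the $\textbf w$-coordinates, the key observation is that $\textbf w = P^{-1}(\textbf z)$ and $P$ is unitary with respect to $H$ (since $P = R_1 R_2$ and both $R_1,R_2$ preserve the area, as noted after \eqref{hypspace}). Therefore $\langle \textbf w, \textbf w\rangle = \langle \textbf z, \textbf z\rangle > 0$, so $\textbf w$ also represents a point of $\hc$ and satisfies the very same defining inequality \eqref{hypspace} with $z_1,z_2$ replaced by $w_1,w_2$. Running the identical argument on $\textbf w$ gives $\lvert w_1\rvert^2 < (\sin\phi+\sin(\theta-\phi))/\sin(\theta+\phi)$ and $\lvert w_2\rvert^2 < \sin\phi/\sin(\theta+\phi)$, completing the proof.

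The only genuine content beyond bookkeeping is the sign check: one must verify that all the trigonometric coefficients appearing are positive so that the two summands on the right of the inequality really are non-negative and can be discarded one at a time. For the parameters in Table \ref{tablelist} one has $0 < \theta < \pi$, $0 < \phi$, and $\theta + \phi < \pi$ (indeed $\phi < \theta/2$-type constraints from the $\mu_i \in (0,1)$ conditions), which makes $\sin\theta$, $\sin\phi$, $\sin(\theta+\phi)$ positive; positivity of $\sin\phi + \sin(\theta-\phi)$ follows since it equals $2\sin(\theta/2)\cos(\theta/2-\phi)$ up to the relevant sum-to-product identity and both factors are positive in range. This is the step I expect to be the mild obstacle — not difficult, but it is where the hypotheses on $p$ and $k$ are actually used — whereas the extraction of the bounds from \eqref{hypspace} is immediate once positivity is in hand.
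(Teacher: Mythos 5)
Your proposal is correct and follows exactly the route the paper indicates: the paper's entire proof is the remark that the bounds are ``straightforward considering the condition on the area for points of $\hc$'', i.e.\ dropping one non-negative term at a time from the positivity of $\textbf z^* H \textbf z$ as in \eqref{hypspace}, with the $\textbf w$-coordinate bounds following because $P$ is unitary with respect to $H$. Your additional sign verifications (in particular $\sin\phi+\sin(\theta-\phi)=2\sin(\theta/2)\cos(\theta/2-\phi)>0$, which matters since $\theta-\phi$ can be negative) are exactly the bookkeeping the paper leaves implicit.
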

The proof is straightforward considering the condition on the area for points of $\hc$, in a similar spirit as the inequalities in \cite{boadiparker}. 

The second useful lemma is the following, divided in two cases according to the values of $p$ and $l$, the latter as defined in Section \ref{ldef} in terms of $p$ and $k$.

\begin{lemma}\label{lemmaone}
Let \emph{$\textbf{z} \in \hc$}. 
Then we have 
\begin{enumerate}
\item If $p > 6$, then 
\[\lvert z_1 \rvert, \lvert w_1 \rvert < 1,\] 
\item If $l \geq 0$, then 
\[\lvert z_2 \rvert, \lvert w_2 \rvert \leq 1.\]
\end{enumerate}
\end{lemma}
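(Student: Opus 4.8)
The strategy is to exploit the inequalities of Lemma \ref{lemmasquare} together with the explicit relations $\mu_i = \mu_i(p,k)$ from \eqref{pkangle} and the identifications $\theta = 2\pi/p$, $\phi = \pi/k$. For part (1), I would start from the bound $\lvert z_1\rvert^2, \lvert w_1\rvert^2 \leq \frac{\sin\phi + \sin(\theta-\phi)}{\sin(\theta+\phi)}$ and show that the right-hand side is $< 1$ precisely when $p > 6$. Clearing denominators, the inequality $\sin\phi + \sin(\theta - \phi) < \sin(\theta + \phi)$ is equivalent, after the sum-to-product identity $\sin(\theta+\phi) - \sin(\theta-\phi) = 2\cos\theta\sin\phi$, to $\sin\phi < 2\cos\theta\sin\phi$, i.e. to $\cos\theta > \tfrac12$, i.e. to $\theta < \pi/3$, i.e. to $2\pi/p < \pi/3$, which is exactly $p > 6$. (One should note $\sin\phi > 0$ since $0 < \phi < \pi$, so division is legitimate, and $\sin(\theta+\phi) > 0$ needs checking — this follows because $\theta + \phi < \pi$ in all the relevant ranges, which can be read off the relation $\mu_2 = \tfrac12 - \tfrac1p > 0$ together with $\mu_5 = \tfrac2p + \tfrac1k < 1$; alternatively $\theta+\phi = \pi(2/p + 1/k) = \pi\mu_5 \in (0,\pi)$.) This settles part (1).

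For part (2), I would similarly combine $\lvert z_2\rvert^2, \lvert w_2\rvert^2 \leq \frac{\sin\phi}{\sin(\theta+\phi)}$ with the goal $\frac{\sin\phi}{\sin(\theta+\phi)} \leq 1$, i.e. $\sin\phi \leq \sin(\theta+\phi)$. Writing $\theta + \phi = \pi\mu_5$ and $\phi = \pi/k$, and recalling from \eqref{ldef} that $\tfrac1l = \tfrac12 - \tfrac1p - \tfrac1k$, the hypothesis $l \geq 0$ is equivalent to $\tfrac1l \geq 0$ (when $l$ is finite and positive) or to the limiting case $l = \infty$ giving $\tfrac12 - \tfrac1p - \tfrac1k = 0$; in both cases $\tfrac1p + \tfrac1k \leq \tfrac12$. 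Then $\mu_5 = \tfrac2p + \tfrac1k = \tfrac1p + (\tfrac1p + \tfrac1k) \leq \tfrac1p + \tfrac12$. The inequality $\sin\phi \leq \sin(\theta+\phi)$ with both arguments in $(0,\pi)$ holds iff $\phi \leq \theta+\phi$ and $\phi \leq \pi - (\theta+\phi)$; the first is automatic, and the second reads $\pi/k \leq \pi - \pi\mu_5 = \pi(1 - \mu_5) = \pi(\tfrac12 + \tfrac1p - \tfrac2p - \tfrac1k + \text{...})$ — this simplifies to $\tfrac1k \leq \tfrac12 - \tfrac1p - \tfrac1k + \tfrac1p$, hence to $\tfrac2k \le \tfrac12 - \ldots$; I would carry out this arithmetic carefully and check it reduces exactly to $\tfrac1p + \tfrac1k \le \tfrac12$, i.e. to $l \ge 0$. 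The one subtlety is the sign conventions for $l = \infty$ and for $l < 0$ versus $l > 0$, which must be matched against how the table encodes these; the cleanest formulation is in terms of $1/l \ge 0$.

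The main obstacle I anticipate is not any single trigonometric manipulation but rather the bookkeeping of which of $\sin(\theta\pm\phi)$, $\cos\theta$ etc. are positive across the full parameter range of Table \ref{tablelist}: several entries have $\theta$ close to $\pi/2$ (e.g. $p = 4$, $\theta = \pi/2$) and $\phi$ not small, so one must be sure that $\theta + \phi < \pi$ and $\theta - \phi$ has a controlled sign before invoking monotonicity of $\sin$. I would handle this uniformly by translating every trigonometric argument into $\pi$ times an entry of the ball quintuple ($\theta + \phi = \pi\mu_5$, $\theta - \phi = \pi(\tfrac2p - \tfrac1k)$, etc.) and then using only the defining inequalities $0 < \mu_i < 1$ together with the hypotheses $p > 6$ (resp. $l \ge 0$), so that positivity of all the relevant sines is immediate and the argument is case-free. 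The inequalities are non-strict in part (2) precisely because equality $\frac{\sin\phi}{\sin(\theta+\phi)} = 1$ occurs exactly when $l = \infty$ (the boundary case $1/l = 0$), which is consistent with the statement as written.
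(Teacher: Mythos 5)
Your proposal is correct and follows essentially the same route as the paper: both arguments start from Lemma \ref{lemmasquare} and reduce part (1) to $\cos\theta>\tfrac12$ via the identity $\sin(\theta+\phi)-\sin(\theta-\phi)=2\cos\theta\sin\phi$, and part (2) to $\theta+2\phi\leq\pi$, which is exactly $1/l\geq 0$. Your extra care about the positivity of $\sin(\theta+\phi)$ (via $\theta+\phi=\pi\mu_5\in(0,\pi)$) is a sound supplement but does not change the argument.
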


\begin{proof}
Obviously if the square of the modulus of a coordinate is smaller than 1, so is the modulus of the coordinate itself.
We then just need to prove that the square of such moduli are smaller than 1.
By the previous Lemma \ref{lemmasquare}, we have 
\[
\lvert z_1 \rvert^2, \lvert w_1 \rvert^2 \leq \frac{\sin \phi+\sin(\theta-\phi)}{\sin(\theta+\phi)}.
\]

For the first part, we then just need to show that 
\[
\frac{\sin \phi+\sin(\theta-\phi)}{\sin(\theta+\phi)} < 1.
\]
But we have 
\[
\frac{\sin \phi+\sin(\theta-\phi)}{\sin(\theta+\phi)}=
\frac{\sin \phi-2\sin \phi \cos \theta}{\sin(\theta+\phi)} +1=
1- \frac{\sin \phi}{\sin(\theta+\phi)} (2\cos \theta -1) < 1,
\]
where the last inequality comes from the fact that $\frac{\sin \phi}{\sin(\theta+\phi)} (2\cos \theta -1)$ is positive when $0<\theta < \frac{\pi}{3}$.
Since $\theta=\frac{2\pi}{p}$, this is the case when $p > 6$, as required.

For the second inequality, by the same Lemma \ref{lemmasquare}, we just need to prove that 
\[
\frac{\sin \phi}{\sin(\theta+\phi)} \leq 1.
\]
But this is true as long as $\sin \phi \leq \sin(\theta+\phi)$. 
Moreover, this condition is equivalent to the statement 
\[
\theta+\phi \leq \pi- \phi \Longleftrightarrow 
0 \leq \pi-2\phi-\theta \Longleftrightarrow
0 \leq \frac{2 \pi}{2} -\frac{2 \pi}{k}-\frac{2\pi}{p} \Longleftrightarrow,
0 \leq l
\]
where the second equivalence comes from the fact that $\theta=\frac{2\pi}{p}$, $\phi=\frac{\pi}{k}$ and $\frac{1}{l}=\frac{1}{2}-\frac{1}{p}-\frac{1}{k}$.
This implies that the condition in the second inequality corresponds to $l \geq 0$ and hence we are done. 
\end{proof}

\subsubsection{Ridges}\label{ridges}

In this section we will present the dimension 2 facets of our polyhedron, i.e. the ridges. 
We will divide the ridges in two types. 
The first type of ridge is obtained by intersecting two bisectors containing either the vertex $\textbf{z}_1$ or $\textbf{z}_2$ in their intersection. 
We will get from these intersections some pentagonal ridges and some triangular ones. 
The former will be contained in Lagrangian planes, while the latter are contained in complex lines. 

The second type of ridge comes from the intersection of bisectors defined by one condition on the $\textbf{z}$-coordinates and one on the $\textbf{w}$-coordinates.
We will again get some triangular ridges, contained in complex lines, but this time we will also get hexagonal ridges, contained in Giraud discs.

We will name the ridges according to the following convention. 
The ridge named $F(T,S)$, for $T,S \in \{ P,P^{-1},J, J^{-1}, R_1, R_1^{-1}, R_2, R_2^{-1}\}$, will be the ridge contained in the intersection of the bisector $B(T)$ and $B(S)$. 

The following table summarizes the ridges of the first type.
In the first group there are ridges in the intersection of two bisectors, both containing the vertex $\textbf{z}_1$ (in other words, bisectors defined by conditions on the $\textbf{z}$-coordinates).
In the second group are ridges contained in two bisectors defined by conditions on the $\textbf{w}$-coordinates.
The last column says if the ridge is contained in a complex line, marked with S as it is a common slice of the two bisector, or in a Lagrangian plane, marked with M because it is a common meridian of the two bisectors.

\begin{center}
\begin{tabular}{|c|c|c|c|}
\hline 
Ridge  & Vertices in the ridge & Coordinates &\\
\hline
$F(P,J)$ & $\textbf{z}_{1}, \textbf{z}_{9}, \textbf{z}_{12}$ & $z_1=0$& S\\
$F(R_1,R_1^{-1})$ & $\textbf{z}_{1}, \textbf{z}_{3}, \textbf{z}_{6}$ & $z_2=0$& S \\
$F(P,R_1)$ & $\textbf{z}_{1}, \textbf{z}_{3}, \textbf{z}_{4}, \textbf{z}_{9}, \textbf{z}_{10}$ & $\im(z_1)=\im(z_2)=0$ & M\\
$F(P,R_1^{-1})$ & $\textbf{z}_{1}, \textbf{z}_{3}, \textbf{z}_{5}, \textbf{z}_{12}, \textbf{z}_{13}$ & $\im(z_1)=\im(e^{-i\theta}z_2)=0$ & M \\
$F(J,R_1)$ & $\textbf{z}_{1}, \textbf{z}_{6}, \textbf{z}_{7}, \textbf{z}_{9}, \textbf{z}_{11}$ & $\im(e^{i\phi}z_1)=\im(z_2)=0$ & M\\
$F(J,R_1^{-1})$ & $\textbf{z}_{1}, \textbf{z}_{6}, \textbf{z}_{8}, \textbf{z}_{12}, \textbf{z}_{14}$ & $\im(e^{i\phi}z_1)=\im(e^{-i\theta}z_2)=0$ & M \\
\hline
$F(P^{-1},J^{-1})$ & $\textbf{z}_{2}, \textbf{z}_8, \textbf{z}_{14}$ & $w_1=0$ & S\\
$F(R_2,R_2^{-1})$ & $\textbf{z}_{2}, \textbf{z}_4, \textbf{z}_{10}$ & $w_2=0$ & S \\
$F(P^{-1},R_2)$ & $\textbf{z}_{2}, \textbf{z}_{4}, \textbf{z}_{5}, \textbf{z}_{13}, \textbf{z}_{14}$ & $\im(w_1)=\im(w_2)=0$ & M\\
$F(P^{-1},R_2^{-1})$ & $\textbf{z}_{2}, \textbf{z}_{3}, \textbf{z}_{4}, \textbf{z}_{6}, \textbf{z}_{8}$ & $\im(w_1)=\im(e^{-i\theta}w_2)=0$ & M\\
$F(J^{-1},R_2)$ & $\textbf{z}_{2}, \textbf{z}_{9}, \textbf{z}_{10}, \textbf{z}_{12}, \textbf{z}_{14}$ & $\im(e^{-i\phi}w_1)=\im(w_2)=0$ & M\\
$F(J^{-1},R_2^{-1})$ & $\textbf{z}_{2}, \textbf{z}_{7}, \textbf{z}_{8}, \textbf{z}_{10}, \textbf{z}_{11}$ & $\im(e^{-i\phi}w_1)=\im(e^{-i\theta}z_2)=0$ & M\\
\hline
\end{tabular}
\end{center}

The second type of ridges are the ones not containing the vertices $\textbf{z}_1$ or $\textbf{z}_2$ and they are listed in the following table.
In this case the ridges are contained either in a Giraud disc or in a complex line.
The last column of the table will hence have a G in the first case and, as before, an S in the latter. 

\begin{center}
\begin{tabular}{|c|c|c|c|}
\hline
Ridge  & Vertices in the ridge & Coordinates &\\
\hline
$F(P,R_2)$ & $\textbf{z}_4, \textbf{z}_5, \textbf{z}_9, \textbf{z}_{10}, \textbf{z}_{12}, \textbf{z}_{13}$ & $\im (z_1)=\im(w_2)=0$ & G \\
$F(J,J^{-1})$ & $\textbf{z}_7, \textbf{z}_8, \textbf{z}_9, \textbf{z}_{11}, \textbf{z}_{12}, \textbf{z}_{14}$ & $\im(e^{i\phi} z_1)=\im(e^{-i \phi} w_1)=0$ & G \\
$F(R_1,R_2^{-1})$ & $\textbf{z}_3, \textbf{z}_4, \textbf{z}_6, \textbf{z}_7, \textbf{z}_{10}, \textbf{z}_{11}$ & $\im(z_2)=\im(e^{-i\theta}w_2)=0$ & G \\
$F(R_1^{-1},P^{-1})$ & $\textbf{z}_3,\textbf{z}_5,\textbf{z}_6, \textbf{z}_8, \textbf{z}_{13}, \textbf{z}_{14}$ & $\im(e^{-i\theta} z_2)=\im(w_1)=0$ & G \\
\hline
$F(P,P^{-1})$ & $\textbf{z}_3, \textbf{z}_4,\textbf{z}_5$ & $\im(z_1)=\im(w_1)=0$ & S \\
$F(J,R_2^{-1})$ & $\textbf{z}_6, \textbf{z}_7,\textbf{z}_8$ & $\im(e^{i\phi}z_1)=\im(e^{-i\theta}w_2)=0$ & S \\
$F(R_1,J^{-1})$ & $\textbf{z}_9, \textbf{z}_{10},\textbf{z}_{11}$ & $\im(z_2)=\im(e^{-i\phi}w_1)=0$ & S \\
$F(R_1^{-1},R_2)$ & $\textbf{z}_{12}, \textbf{z}_{13},\textbf{z}_{14}$ & $\im(e^{-i\theta}z_2)=\im(w_2)=0$ & S \\
\hline
\end{tabular}
\end{center}

From now on the ridges contained in a common slice will be called S-ridges, the ones contained in a meridian will be the M-ridges and the ones contained in a Giraud disk will be the G-ridges.

\subsubsection{Edges}
We so far discussed most facets of the polyhedron: the vertices, the ridges, the sides. 
In this section we will present the last missing ones, the 1-dimensional facets of $D$, called edges.  
The edge between two vertices $\textbf{z}_i$ and $\textbf{z}_j$ will be denoted by $\gamma_{i,j}=\gamma_{j,i}$.
The edges of the polyhedron $D$ arise as 1-dimensional intersection of three or more sides. 
In the following table we will list them, pointing out in which ridges they are contained. 

\begin{center}
\begin{longtable}{|c|c|c|c|c|c|}
\hline
Edge & S-ridge &M-ridge & M-ridge & G-ridge & G-ridge \\
\hline
$\gamma_{1,3}$ & $F(R_1,R_1^{-1})$ & $F(P,R_1)$ & $F(P,R_1^{-1})$ && \\
$\gamma_{1,6}$ & $F(R_1,R_1^{-1})$ & $F(J,R_1)$ & $F(J,R_1^{-1})$ && \\
$\gamma_{1,9}$ & $F(P,J)$ & $F(P,R_1)$ & $F(J,R_1)$ && \\
$\gamma_{1,12}$ & $F(P,J)$ & $F(P,R_1^{-1})$ & $F(J,R_1^{-1})$ && \\
$\gamma_{2,4}$ & $F(R_2,R_2^{-1})$ & $F(P^{-1},R_2^{-1})$ & $F(P^{-1},R_2)$ && \\
$\gamma_{2,8}$ & $F(P^{-1}, J^{-1})$ & $F(P^{-1},R_2^{-1})$ & $F(J^{-1},R_2^{-1})$ && \\
$\gamma_{2,10}$ & $F(R_2,R_2^{-1})$ & $F(J^{-1},R_2)$ & $F(J^{-1},R_2^{-1})$ && \\
$\gamma_{2,14}$ & $F(P^{-1},J^{-1})$ & $F(P^{-1},R_2)$ & $F(J^{-1},R_2)$ && \\
$\gamma_{5,13}$ && $F(P,R_1^{-1})$ & $F(P^{-1},R_2)$ & $F(P,R_2)$ & $F(R_1^{-1},P^{-1})$ \\
$\gamma_{7,11}$ && $F(J,R_1)$ & $F(J^{-1}, R_2^{-1})$ & $F(J,J^{-1})$ & $F(R_1,R_2^{-1})$ \\
$\gamma_{9,10}$ & $F(R_1,J^{-1})$ & $F(P,R_1)$ & $F(J^{-1},R_2)$ & $F(P,R_2)$ & \\
$\gamma_{3,4}$ & $F(P,P^{-1})$ & $F(P,R_1)$ & $F(P^{-1},R_2^{-1})$ & $F(R_1,R_2^{-1})$ & \\
$\gamma_{6,8}$ & $F(J,R_2^{-1})$ & $F(J,R_1^{-1}$ & $F(P^{-1},R_2)$ & $F(R_1^{-1},P^{-1})$ & \\
$\gamma_{12,14}$ & $F(R_1^{-1},R_2)$ & $F(J,R_1^{-1})$ & $F(J^{-1},R_2)$ & $F(J,J^{-1})$ & \\
$\gamma_{4,10}$ & $F(R_2,R_2^{-1})$ & $F(P,R_1)$ && $F(P,R_2)$ & $F(R_1,R_2^{-1})$ \\
$\gamma_{8,14}$ & $F(P^{-1},J^{-1})$ &  $F(J,R_1^{-1})$ && $F(J,J^{-1})$ & $F(R_1^{-1},P^{-1})$ \\
$\gamma_{9,12}$ & $F(P,J)$ & $F(J^{-1},R_2)$ && $F(P,R_2)$ & $F(J,J^{-1})$ \\
$\gamma_{3,6}$ & $F(R_1,R_1^{-1})$ & $F(P^{-1},R_2^{-1})$ && $F(R_1,R_2^{-1})$ & $F(R_1^{-1},P^{-1})$ \\
$\gamma_{13,14}$ & $F(R_1^{-1},R_2)$ & $F(P^{-1},R_2)$ && $F(R_1^{-1},P^{-1})$ & \\
$\gamma_{12,13}$ & $F(R_1^{-1},R_2)$ & $F(P,R_1^{-1})$ && $F(P,R_2)$ & \\
$\gamma_{10,11}$ & $F(R_1,J^{-1})$ & $F(J^{-1},R_2^{-1})$ && $F(R_1,R_2^{-1})$ & \\
$\gamma_{9,11}$ & $F(R_1,J^{-1})$ & $F(J,R_1)$ && $F(J,J^{-1})$ & \\
$\gamma_{7,8}$ & $F(J,R_2^{-1})$ & $F(J^{-1},R_2^{-1})$ && $F(J,J^{-1})$ & \\
$\gamma_{6,7}$ & $F(J,R_2^{-1})$ & $F(J,R_1)$ && $F(R_1,R_2^{-1})$ & \\
$\gamma_{4,5}$ & $F(P,P^{-1})$ & $F(P^{-1},R_2)$ && $F(P,R_2)$ & \\
$\gamma_{3,5}$ & $F(P,P^{-1})$ & $F(P,R_1^{-1})$ && $F(R_1^{-1},P^{-1})$ & \\
\hline
\end{longtable}
\end{center}

The edges verify the following proposition:
\begin{prop}
Each edge $\gamma_{i,j}$ of the polyhedron is a geodesic segment joining the two vertices \emph{$\textbf{z}_i$} and \emph{$\textbf{z}_j$}.
\end{prop}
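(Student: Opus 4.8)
The plan is to realise each one-cell $\gamma_{i,j}$ as a subset of the unique geodesic $g_{i,j}$ joining the two vertices $\textbf z_i$ and $\textbf z_j$, by exhibiting two totally geodesic submanifolds that both contain these vertices. First I would assemble the facts about totally geodesic submanifolds of $\hc$ that I need. Complex lines and Lagrangian planes are totally geodesic, hence each contains the geodesic through any two of its points. Moreover: two distinct complex lines meet in at most one point; a complex line meets a Lagrangian plane in the empty set, a single point, or a complete geodesic; and two distinct Lagrangian planes meet in the empty set, a single point, or a complete geodesic. The last two follow by inspecting a common point $x$: the tangent space of a Lagrangian plane is a totally real (Lagrangian) $2$-plane of $(\C^2,H)$, so it contains no complex line and meets another such $2$-plane, resp. a complex line, in a real subspace of dimension at most $1$, which by total geodesy exponentiates to the whole intersection near $x$. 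Consequently, if two subspaces of complex-line or Lagrangian type are distinct and share two distinct points, their intersection is exactly the geodesic through those points, and in that case at least one of the two must be a Lagrangian plane.

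Next I would use the dictionary between ridges and totally geodesic carriers from Section~\ref{ridges}: an S-ridge, being a common slice of its two bisectors, lies in a complex line — one of the $L_{ij}$, or the line cut out by the explicit equation in the ridge tables — while an M-ridge, being a common meridian, lies in a Lagrangian plane, given there by the two conditions $\im(\cdot)=0$; Giraud discs are not totally geodesic, so the G-ridges play no role. Inspecting the edge table, for every edge $\gamma_{i,j}$ the ridges through it include either two M-ridges or one M-ridge and one S-ridge; let $X_1,X_2$ be the carriers of such a pair, so each is a complex line or a Lagrangian plane and at least one is Lagrangian. By the incidences recorded in the tables, both $\textbf z_i$ and $\textbf z_j$ lie in every ridge through $\gamma_{i,j}$, hence in $X_1\cap X_2$, and the coordinate tables of Section~\ref{constr} show $\textbf z_i\neq\textbf z_j$.

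To conclude, I still need $X_1\neq X_2$; this is automatic unless both carriers are Lagrangian planes (the case of two M-ridges), and in that case I would rule out $X_1=X_2$ by comparing the defining equations, since a single Lagrangian plane cannot be a common meridian of three or four of the bisectors $B(P^{\pm1}),B(J^{\pm1}),B(R_1^{\pm1}),B(R_2^{\pm1})$: under the standing choices $\theta=\tfrac{2\pi}{p}$, $\phi=\tfrac{\pi}{k}$ with $(p,k)$ in Table~\ref{tablelist} one has $\theta,\phi,\theta\pm\phi\notin\pi\Z$, so the phases $1,e^{i\phi},e^{-i\theta},e^{i(\theta+\phi)},\dots$ occurring there are pairwise non-proportional in $\C$. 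Granting this, the first paragraph gives $X_1\cap X_2=g_{i,j}$, the geodesic through $\textbf z_i$ and $\textbf z_j$. Since $\gamma_{i,j}$ lies in the carriers of all its ridges, in particular $\gamma_{i,j}\subseteq X_1\cap X_2=g_{i,j}$; as the two vertices incident to $\gamma_{i,j}$ are $\textbf z_i$ and $\textbf z_j$, the edge is the sub-arc of $g_{i,j}$ between them, i.e. the geodesic segment $[\textbf z_i,\textbf z_j]$ — equivalently, restricting the argument inequalities of \eqref{polyhedron} to $g_{i,j}$ carves out exactly that segment.

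The only step that needs genuine attention rather than bookkeeping is the finite verification behind $X_1\neq X_2$ — that no two meridian ridges through a common edge lie in one Lagrangian plane — which comes down to the phase non-coincidences above and must be checked to hold for every value of $(p,k)$ in Table~\ref{tablelist}. The remaining ingredients — identifying the carriers, checking that $\textbf z_i$ and $\textbf z_j$ lie in them, and reading off that the endpoints are precisely these two vertices — are immediate from the tables of Section~\ref{constr}.
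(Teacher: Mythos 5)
Your proposal is correct and follows essentially the same route as the paper: the paper's proof likewise reads off from the edge--ridge incidence table that every edge lies in at least two S- or M-ridges (two M-ridges, or one S- and one M-ridge), hence in the intersection of two totally geodesic carriers (complex lines or Lagrangian planes), which forces the edge to be a geodesic arc. The only difference is that you make explicit the distinctness of the two carriers and the classification of their intersections, details the paper leaves implicit.
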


\begin{proof}
We claim that each edge is contained in the common intersection of at least two totally geodesic subspaces of two bisectors. 
This implies that such edge is a geodesic arc.
Remember, from Section \ref{bisectors}, that slices and meridians are totally geodesic subspaces of bisectors.

To prove the claim, let us consider for each edge the ridges it is contained in, as in the previous table.
Just looking at the list we can easily remark the following information:
\begin{itemize}
\item Each edge containing either $\textbf{z}_1$ or $\textbf{z}_2$ is contained in two M-ridges and one S-ridge;
\item Two edges, namely $\gamma_{7,11}$ and $\gamma_{5,13}$, are contained in two M-ridges and two G-ridges;
\item All other edges are contained in an S-ridge, an M-ridge and a G-ridge; some of them are contained also in one more ridge, that is either an M-ridge or a G-ridge.
\end{itemize}
\end{proof}

\begin{rk}
For the edges containing either $\textbf{z}_1$ or $\textbf{z}_2$ we have additional information. 
Each of these edges is contained in two M-ridges of the same bisector.
This implies that such edges are in the spine of the bisectors. 
\end{rk}

\subsubsection{Other bisector intersections}

We will now analyse all the other intersections between pairs of bisectors, to show that the ones we listed in Section \ref{ridges} are the only possible ridges. 
We will first analyse certain bisector intersections which are made of the union of two edges of the polyhedron. 
In all the cases there will be three vertices inside the intersection and we will prove that the intersection actually consist in each case of the union of the only two edges connecting two of these points to a central one. 
We remark that we are always considering the parts of the intersection that are inside or on the boundary of our polyhedron.

The proofs for the following propositions go on the lines of the ones that can be found in the appendix of \cite{livne} and in \cite{boadiparker}.
For each case we will give one example and the others will be done in the exact same way.

\begin{prop}
The following bisector intersections consist of the union of two edges: 
\begin{align*}
B(P) \cap B(J^{-1})&= \gamma_{10,9} \cup \gamma_{9,12}, &
B(J^{-1}) \cap B(R_1^{-1}) &= \gamma_{8,14} \cup \gamma_{14,12}, \\
B(P) \cap B(R_2^{-1})&= \gamma_{3,4} \cup \gamma_{4,10}, &
B(J) \cap B(R_2)&= \gamma_{9,12} \cup \gamma_{12,14}, \\
B(R_1) \cap B(R_2) &= \gamma_{4,10} \cup \gamma_{10,9}, &
B(J) \cap B(P^{-1}) &= \gamma_{6,8} \cup \gamma_{8,14}, \\
B(R_1) \cap B(P^{-1}) &= \gamma_{4,3} \cup \gamma_{3,6}, &
B(R_1^{-1}) \cap B(R_2^{-1}) &= \gamma_{3,6} \cup \gamma_{6,8}.
\end{align*}
\end{prop}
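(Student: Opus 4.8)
The plan is to treat each of the eight cases the same way, so it suffices to describe the method on one of them, say $B(P) \cap B(J^{-1}) = \gamma_{10,9} \cup \gamma_{9,12}$, and to note that the remaining seven follow by applying the symmetry $\iota$ of Lemma \ref{iotaaction} together with the action of the maps on bisectors recorded after the bisector table, or by a verbatim repetition of the computation. The basic observation is that $B(P)$ is a condition on a $\textbf{z}$-coordinate ($\im(z_1)=0$) and $B(J^{-1})$ is a condition on a $\textbf{w}$-coordinate ($\im(e^{-i\phi}w_1)=0$), so this is a \emph{mixed} intersection; by Section \ref{ridges} such an intersection generically yields a G-ridge, and we must show that here the $2$-dimensional Giraud-type piece degenerates inside $\overline{D}$ to a $1$-complex-dimensional, i.e. real $2$-dimensional, set, namely the union of the two listed edges.

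First I would write down the two defining real-analytic equations on $\overline D$: $\im(z_1)=0$ from $B(P)$ and $\im(e^{-i\phi}w_1)=0$ from $B(J^{-1})$, using the explicit change of coordinates \eqref{w1}–\eqref{z1} to express $w_1$ as a function of $z_1,z_2$. Imposing $\im(z_1)=0$ makes $z_1$ real, and then $\im(e^{-i\phi}w_1)=0$ becomes a single real equation in the real variable $z_1$ and the complex variable $z_2$, whose argument is constrained to $[0,\theta]$ by the definition of $D$ in \eqref{polyhedron}. The key step is to show that, on this locus, $z_1$ and $z_2$ are forced to lie on the boundary strata: concretely, I expect the real equation coming from $\im(e^{-i\phi}w_1)=0$ (once $z_1\in\R$) to factor, after clearing denominators and using $\im(z_1)=0$, as a product of linear real forms, each of which cuts out one of the complex lines $L_{ij}$ from the $L_{ij}$-table (here the candidates are among the lines through $\textbf z_9,\textbf z_{10},\textbf z_{12}$, i.e. $L_{01}$, $L_{*3}$, $L_{03}$ read off from the vertex/cone-point tables). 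Identifying which linear factors actually meet $\overline D$, and checking that the surviving solution set is exactly the two geodesic segments $\gamma_{10,9}\cup\gamma_{9,12}$ — which we already know are edges by the edge table and are geodesic by the preceding Proposition — finishes the case. The inclusion $\supseteq$ is immediate since each listed edge lies in both ridges/bisectors by the edge table; the content is the inclusion $\subseteq$.

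Throughout I would lean on the inequalities of Lemma \ref{lemmasquare} and Lemma \ref{lemmaone} to discard the spurious branches: after the factorization, several factors vanish only outside the region $\{|z_1|^2\le(\sin\phi+\sin(\theta-\phi))/\sin(\theta+\phi),\ |z_2|^2\le\sin\phi/\sin(\theta+\phi)\}$ or force $\arg(z_2)$ out of $[0,\theta]$, and those are exactly ruled out for $\textbf z\in\overline D\cap\hc$. The main obstacle I anticipate is purely computational bookkeeping: producing the factorization of the mixed equation cleanly enough to see that no $2$-dimensional component survives inside $\overline D$ — in other words, certifying that the Giraud disc $B(P,J^{-1},\text{third point})$ does not contribute a hexagonal ridge here but collapses onto the spine-type configuration — and then doing the sign/argument analysis on each factor. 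Once one case is carried out in this style, the other seven are structurally identical (three of them are $\iota$-images of the ones already done, by Remark \ref{iotaconjug} and Lemma \ref{iotaaction}), so I would present one in detail and assert the rest "in the exact same way", consistent with the paper's stated convention.
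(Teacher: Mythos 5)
Your overall strategy---carry out one mixed intersection explicitly and obtain the remaining cases by the $\iota$-symmetry or by verbatim repetition---matches the paper's, and your observation that the eight cases pair up under $\iota$ is correct. But there is a genuine gap in the mechanism you propose for the one case you discuss. The two conditions $\im(z_1)=0$ and $\im(e^{-i\phi}w_1)=0$ are two \emph{real} equations on $\hc$, so together they cut out a $2$-real-dimensional set (the Giraud disc); no factorisation of the second equation ``into linear real forms each cutting out a complex line'' can reduce this to the $1$-dimensional set $\gamma_{10,9}\cup\gamma_{9,12}$, since a product of real factors in the three real variables $(z_1,\re z_2,\im z_2)$ still has a $2$-dimensional zero locus. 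A third real equation is needed, and your outline never identifies where it comes from: the inequalities of Lemmas \ref{lemmasquare} and \ref{lemmaone} and the argument ranges in \eqref{polyhedron} only trim the disc to a region, they do not collapse its dimension.

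The paper supplies that third equation as follows. It parametrises $B(P)\cap B(J^{-1})$ by the two real numbers $x=z_1$ and $u=e^{-i\phi}w_1$, solves \eqref{w1} and \eqref{z1} to express $z_2$ and $w_2$ as functions of $(x,u)$, and then observes that the three vertices $\textbf z_9,\textbf z_{10},\textbf z_{12}$ of the intersection all lie on the third bisector $B(R_2)$, from which it deduces that the relevant part of the intersection also satisfies $\im(w_2)=0$. It is the numerator of $\im(w_2)$, as a function of the Giraud parameters $(x,u)$, that factors: it equals $x\sin\phi\,\bigl(u\sin(\theta+\phi)-\sin\theta\bigr)$. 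The branch $x=0$ gives $z_1=0$, hence membership in $B(J)$ and the edge $\gamma_{9,12}$; the branch $u=\sin\theta/\sin(\theta+\phi)$ puts the point on $L_{*3}$, which forces $z_2\in\R$, hence membership in $B(R_1)$ and the edge $\gamma_{10,9}$. So a factorisation of the kind you hope for does exist, but it is a factorisation in $(x,u)$ of the \emph{extra} condition contributed by the third bisector of the Giraud disc, not of the defining equation of $B(J^{-1})$ itself. Without importing that step (or an equivalent sign argument in the style of the subsequent proposition, where two quantities constrained by Lemma \ref{lemmapoly} to have opposite signs are shown to have equal signs and hence to vanish), your outline does not close.
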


\begin{proof}
Let us consider the intersection $B(P) \cap B(J^{-1})$ and a point $\textbf{z} \in D$ in it. 
It contains the three vertices $\textbf{z}_9, \textbf{z}_{10}, \textbf{z}_{12}$. 
From the table of the sides we can easily see that all three belong also to $B(R_2)$, which implies that also $\textbf{z}$ is in $B(R_2)$. 

Now, the coordinates of a point in $B(P) \cap B(J^{-1})$, satisfy $w_1=ue^{i \phi}$ and $z_1=x$. 
Using then the formulas for $w_1$ as given in (\ref{w1}) and the one for $z_1$ as given in (\ref{z1}), we have
\begin{align*}
ue^{i \phi} &=\frac{-xe^{i \phi} \sin \theta -z_2 e^{-i \theta} (\sin \phi+ \sin (\theta - \phi)) + \sin \phi+ \sin(\theta - \phi)}{-x \sin(\theta + \phi) -z_2e^{-i \theta} \sin(\theta+\phi)+\sin \phi+ e^{-i \phi} \sin \theta}, \\
x&= \frac{-u \sin \theta -w_2 (\sin \phi+\sin(\theta-\phi))+\sin \phi +\sin(\theta - \phi)}{-u \sin(\theta+\phi)e^{i \phi} - \sin(\theta +\phi)w_2+\sin \phi +e^{i \phi} \sin \theta}.
\end{align*}
We can solve the equations and find formulas for $z_2$ and $w_2$. 
They will be as follows. 
\begin{align*}
z_2 &= e^{i \theta} \frac{xu e^{i \phi} \sin(\theta + \phi) -u (e^{i \phi} \sin \phi+\sin \theta) - xe^{i \phi} \sin \theta + \sin \phi +\sin(\theta -\phi)}{\sin(\theta - \phi)+\sin \phi -ue^{i \phi}\sin(\theta+\phi)}, \\
w_2 &= \frac{xue^{i \phi} \sin(\theta+\phi) -x(e^{i \phi} \sin \theta +\sin \phi)-u \sin \theta +\sin \phi+\sin(\theta - \phi)}{\sin \phi+\sin(\theta - \phi) -x \sin(\theta + \phi)}.
\end{align*}

The condition for $\textbf{z}$ to be also in $B(R_2)$ gives us that $\im (w_2)=0$. 
We can hence apply this to the expression for $w_2$ that we just found and we get
\[
\im w_2 = \frac{x \sin \phi (u \sin(\theta+ \phi)-\sin \theta)}{\sin \phi+\sin(\theta - \phi) -x \sin(\theta + \phi)}=0.
\]

In order for this to be true we need the numerator to be 0 and since $\sin \phi \neq 0$ for our values of $\phi$, then we have either 
\begin{align*}
x&=0, & &\text{ or } & &u \sin(\theta+ \phi)-\sin \theta=0 \\
&&&& &\Longleftrightarrow u= \frac{\sin \theta}{\sin(\theta+\phi)}.
\end{align*}

This means that either $z_1=0$ or $w_1= e^{i \phi} \frac{\sin \theta}{\sin(	\theta+\phi)}$. 
In the first case, the condition implies that $\textbf{z} \in B(J)$ too, so it is on the edge $\gamma_{9,12}$.
In the second case the condition implies that we are on the line $L_{*3}$. 
But then, in the table defining the lines, we can read the $\textbf{z}$-coordinates of such lines and see that this implies that $z_2 \in \R$. 
Then $\textbf{z} \in B(R_1)$ and hence we are on the edge $\gamma_{10,9}$.
\end{proof}

In some of the ridges contained in a complex line the intersection actually consists of the union of a triangle, which is the ridge itself, and an extra edge connected to one of the vertices of the ridge and not belonging to it. 
We will now see this for the remaining intersections. The proposition will state that if we have a point in the bisector intersection, but not belonging to the complex line containing the ridge, then it is on an edge with one vertex on the ridge and one outside.

\begin{prop}
\begin{itemize}
The bisectors verify:
\item A point \emph{$\textbf{z}$} in the bisectors intersection $B(P) \cap B(P^{-1})$, with $z_1 \neq \frac{\sin \phi +\sin (\theta - \phi)}{\sin (\theta+\phi)}, w_1 \neq \frac{\sin \phi +\sin (\theta - \phi)}{\sin (\theta+\phi)}$, belongs to the edge $\gamma_{5,13}$.
\item A point \emph{$\textbf{z}$} in the bisectors intersection $B(J) \cap B(R_2^{-1})$, with $z_1 \neq e^{-i \phi} \frac{\sin \theta}{\sin (\theta+\phi)}$ and $ w_2 \neq e^{i \theta} \frac{\sin \phi}{\sin (\theta+\phi)}$, belongs to the edge $\gamma_{7,11}$.
\item Moreover, a point \emph{$\textbf{z}$} in the bisectors intersection $B(R_2) \cap B(R_1^{-1})$, with $z_2 \neq e^{i \theta}\frac{\sin \phi}{\sin (\theta+\phi)}$ and $w_2 \neq \frac{\sin \phi}{\sin (\theta+\phi)}$, belongs to the edge $\gamma_{5,13}$.
\item Finally, a point \emph{$\textbf{z}$} in the bisectors intersection $B(R_1) \cap B(J^{-1})$, with $z_2 \neq \frac{\sin \phi}{\sin (\theta+\phi)}$ and $w_1 \neq e^{i \phi} \frac{\sin \theta}{\sin (\theta+\phi)}$, belongs to the edge $\gamma_{7,11}$.
\end{itemize}
\end{prop}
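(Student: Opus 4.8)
\emph{Overall strategy.} The plan is to prove the first item in full and to observe that the other three follow by the identical argument after cyclically relabelling the coordinates $z_1,z_2,w_1,w_2$ and the complex lines $L_{*0},L_{*1},L_{*2},L_{*3}$. In each of the four cases the bisector intersection splits as a triangular S-ridge lying in one of the $L_{*j}$ together with one extra geodesic arc meeting that triangle in a single vertex, and the excluded values in the hypothesis are precisely the equations of that $L_{*j}$ in $\textbf z$- and $\textbf w$-coordinates; so the hypothesis says exactly ``$\textbf z\notin L_{*j}$''. As in the text preceding the proposition, ``point of the intersection'' means a point of $\overline D$.

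\emph{Reduction step for $B(P)\cap B(P^{-1})$.} Let $\textbf z\in B(P)\cap B(P^{-1})\cap\overline D$. By the equations of these bisectors in Section \ref{sides} this means $\im(z_1)=0$ and $\im(w_1)=0$; set $z_1=x\in\R$. I would substitute $z_1=x$ into the formula \eqref{w1} for $w_1=N/M$ (with $N,M$ its numerator and denominator) and impose $N\overline M=\overline N M$. Expanding, the coefficient of $z_2\overline{z_2}$ cancels, so the relation is affine in $\re z_2,\im z_2$ with $x$-dependent coefficients, and each of these coefficients carries the common factor $\sin(\theta+\phi)\,x-\sin\phi-\sin(\theta-\phi)$ — transparently, because on the line $L_{*0}$, which is $\{z_1=\tfrac{\sin\phi+\sin(\theta-\phi)}{\sin(\theta+\phi)}\}$ in $\textbf z$-coordinates and $\{w_1=\tfrac{\sin\phi+\sin(\theta-\phi)}{\sin(\theta+\phi)}\}$ in $\textbf w$-coordinates, $w_1$ is automatically real. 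Factoring this out, a short trigonometric computation (checked against the coordinates of $\textbf z_3,\textbf z_5,\textbf z_{13}$) identifies the complementary factor as a nonzero multiple of $\im(e^{-i\theta}z_2)$. Hence $\textbf z\in B(P)\cap B(P^{-1})$ forces $\textbf z\in L_{*0}$ or $\im(e^{-i\theta}z_2)=0$, i.e. $\textbf z\in B(R_1^{-1})$; the hypothesis kills the first alternative, so $\textbf z\in B(P)\cap B(P^{-1})\cap B(R_1^{-1})$.

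\emph{Conclusion step.} Now $B(P)\cap B(R_1^{-1})=\{\im(z_1)=\im(e^{-i\theta}z_2)=0\}\cap\hc$ is a totally real Lagrangian plane $\mathcal L$ — the common meridian carrying the ridge $F(P,R_1^{-1})$ — so $\textbf z\in\mathcal L\cap B(P^{-1})$. Parametrising $\mathcal L$ by $(x,u)\in\R^2$ with $z_1=x$, $z_2=e^{i\theta}u$ and substituting into \eqref{w1}, the condition $\im(w_1)=0$ becomes a real equation of degree at most two in $(x,u)$; it already vanishes on the geodesic $\mathcal L\cap L_{*0}$ through $\textbf z_3,\textbf z_5$ and on the geodesic through $\textbf z_5,\textbf z_{13}$, two distinct lines of $\mathcal L$, so it is exactly their union. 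Since $\textbf z\notin L_{*0}$, the point $\textbf z$ lies on the second line; imposing the argument inequalities of \eqref{polyhedron} (together with Lemma \ref{lemmasquare}) cuts that geodesic down to the segment $\gamma_{5,13}$, so $\textbf z\in\gamma_{5,13}$.

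\emph{Remaining cases and the main obstacle.} The intersections $B(J)\cap B(R_2^{-1})$, $B(R_2)\cap B(R_1^{-1})$, $B(R_1)\cap B(J^{-1})$ are treated the same way, with $L_{*0}$ replaced by $L_{*1},L_{*2},L_{*3}$, the triangular S-ridge inside that line being $F(J,R_2^{-1})$, $F(R_1^{-1},R_2)$, $F(R_1,J^{-1})$, and the third bisector produced by the reduction step being one of the bisectors through $\gamma_{7,11}$ (resp. $\gamma_{5,13}$, resp. $\gamma_{7,11}$) that contains two of the three triangle vertices, namely $B(R_1)$ (resp. $B(P)$, resp. $B(J)$); alternatively, the symmetry $\iota$ of Lemma \ref{iotaaction} together with the relations of Remark \ref{iotaconjug} transports some of these cases into one another. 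The only genuinely delicate point is the factorisation in the reduction step: one must recognise the common factor $\sin(\theta+\phi)z_1-\sin\phi-\sin(\theta-\phi)$ (or its analogue for the other lines), verify that the remaining factor is precisely the defining linear functional of the third bisector, and then match the surviving geodesic with the named edge through the coordinates of two of its endpoints — exactly along the lines of the corresponding lemmas in \cite{livne} and \cite{boadiparker}.
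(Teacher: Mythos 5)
Your reduction step contains a genuine error: the claimed factorisation is false. Writing $w_1=N/M$ with $z_1=x\in\R$, the function $\im(N\overline M)$ has the form $a(x)\re z_2+b(x)\im z_2+c(x)$, and while all three coefficients do vanish at $x_0=\frac{\sin\phi+\sin(\theta-\phi)}{\sin(\theta+\phi)}$ (because $w_1$ is identically real on $L_{*0}$), the constant term $c(x)=\im(B\overline D)$ is a genuine quadratic in $x$ with a \emph{second} root. After dividing out $(x-x_0)$ the residue is $a_1\re z_2+b_1\im z_2+\tilde c(x)$ with $\tilde c$ affine and not identically zero, so it is not a multiple of $\im(e^{-i\theta}z_2)$. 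A decisive counterexample is the vertex $\textbf z_1$, with $(z_1,z_2)=(0,0)$: it satisfies $\im(z_1)=\im(e^{-i\theta}z_2)=0$ and $z_1\neq x_0$, yet its $w_1$-coordinate $\frac{\sin\phi+\sin(\theta-\phi)}{\sin\phi+e^{-i\phi}\sin\theta}$ is not real. Consequently $B(P)\cap B(P^{-1})$ is \emph{not} contained in $L_{*0}\cup B(R_1^{-1})$; away from $L_{*0}$ it is a two-dimensional surface (one further real equation inside the three-dimensional bisector $B(P)$), and no purely algebraic identity can collapse it to the Lagrangian plane, let alone to a single geodesic. Your subsequent "conclusion step" is therefore built on a premise that was never established.

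The statement is only true for points of $\overline D$, and the constraint of lying in the polyhedron must enter the argument in an essential way — this is exactly what your proof omits. The paper's route is: set $z_1=x$, $w_1=u$ both real, solve the coupled relations \eqref{w1} and \eqref{z1} to express $z_2$ and $w_2$ in terms of $(x,u)$, and observe that $\im(e^{-i\theta}z_2)$ and $\im(w_2)$ are both equal to $\sin\theta\sin\phi\,(u-x)$ divided by the denominators $\sin\phi+\sin(\theta-\phi)-u\sin(\theta+\phi)$ and $\sin\phi+\sin(\theta-\phi)-x\sin(\theta+\phi)$ respectively. Lemma \ref{lemmapoly} forces these two quantities to have opposite (weak) signs for $\textbf z\in\overline D$, and the inequality $\im(e^{-i\theta}w_2)\le 0$ together with $|u|<1$ from Lemma \ref{lemmaone} (this is where the hypothesis $p>6$ of the non-degenerate case is used — another ingredient entirely absent from your argument) shows the two denominators have the same sign, whence both imaginary parts vanish, $\textbf z\in B(R_1^{-1})\cap B(R_2)$, and $\textbf z\in\gamma_{5,13}$. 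You should rebuild the proof around these sign constraints rather than around a factorisation of the bisector intersection.
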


\begin{proof}
Take a point $\textbf{z} \in B(P) \cap B(P^{-1})$.
The condition $z_1 \neq \frac{\sin \phi +\sin (\theta - \phi)}{\sin (\theta+\phi)}, w_1 \neq \frac{\sin \phi +\sin (\theta - \phi)}{\sin (\theta+\phi)}$ means that we are not on the line $L_{*0}$, so we are out of the triangular ridge of vertices $\textbf{z}_3, \textbf{z}_4, \textbf{z}_5$. 
Since we are on the intersection $B(P) \cap B(P^{-1})$, then both $z_1$ and $w_1$ have to be real. 
We will hence write $z_1=x$ and $w_1=u$. 
The conditions in the hypothesis implies that we have $x \neq \frac{\sin \phi +\sin (\theta - \phi)}{\sin (\theta+\phi)}, u \neq \frac{\sin \phi +\sin (\theta - \phi)}{\sin (\theta+\phi)}$. 

Using the formulas for $z_1$ in terms of $w_1$ and $w_2$ as given by (\ref{z1}) and the one for $w_1$ in terms of $z_1$ and $z_2$ as given in the formula (\ref{w1}), we can write:
\begin{align*}
x&= \frac{-ue^{-i\phi} \sin \theta -w_2 (\sin \phi +\sin (\theta-\phi))+ \sin \phi +\sin (\theta - \phi)}{-u \sin (\theta+\phi)-w_2 \sin (\theta+\phi) +\sin \phi +e^{i \phi} \sin \theta}, \\
u&= \frac{-xe^{i \phi} \sin \theta -z_2 e^{-i \theta}(\sin \phi +\sin (\theta - \phi))+\sin \phi +\sin (\theta - \phi)}{-x \sin (\theta +\phi)-z_2 e^{-i \theta}\sin(\theta + \phi)+ \sin \phi+ e^{-i \phi}\sin \theta}.
\end{align*}
By solving the equations we can find formulas for $z_2$ and $w_2$ and we get the following:
\begin{align*}
w_2&= \frac{xu \sin(\theta+\phi) -x(\sin \phi +e^{i \phi} \sin \theta)-ue^{-i \phi} \sin \theta +\sin \phi +\sin(\theta- \phi)}{\sin \phi+ \sin (\theta - \phi) -x \sin (\theta +\phi)}, \\
z_2&=e^{i \theta} \frac{xu \sin(\theta+\phi) -xe^{i \phi} \sin \theta -u(\sin \phi+e^{-i \phi}\sin \theta)+\sin \phi+\sin(\theta - \phi)}{\sin \phi+ \sin (\theta - \phi)-u \sin (\theta+ \phi)}.
\end{align*}
Taking only the imaginary part of both expressions, we have 
\begin{align*}
0 &\geq \im (e^{-i \theta} z_2)=\frac{\sin \theta \sin \phi (u-x)}{\sin \phi +\sin (\theta - \phi) -u \sin(\theta+ \phi)}, \\
0 &\leq \im (w_2) = \frac{\sin \theta \sin \phi (u-x)}{\sin \phi +\sin (\theta - \phi) -x \sin(\theta+ \phi)},
\end{align*}
where the first inequalities come from Lemma \ref{lemmapoly}, which holds because we are talking about points on the boundary of the polyhedron. 

We claim that the two quantities on the right side have same sign. 
In this case, since by the lemma they also need to have opposite sign, they must both be 0. 
But this means that the initial point must have also $\im (e^{-i \theta} z_2)=\im (w_2)=0$ and hence be in the bisectors $B(R_1^{-1})$ and $B(R_2)$, which implies that it is on the edge $\gamma_{5,13}$.

To show that the two quantities have the same sign, it is enough to show that the two denominators have same sign. 
The other informations we have about points in the polyhedron, is that $\im (z_2) \geq 0$ and $\im (e^{-i \theta} w_2) \leq 0$. 
From the second inequality, we can write
\[
0 \geq \im (e^{-i \theta} w_2)=
\frac{\sin \phi (u-1)(\sin \phi+\sin(\theta-\phi)-x \sin (\theta+\phi))}{\sin\phi+\sin(\theta-\phi) -u \sin(\theta+\phi)}.
\]
Since $\phi$ is positive and smaller than $\pi$, we have that $\sin \phi$ is positive. 
Also, by the first part of Lemma \ref{lemmaone}, $u-1$ is negative. 
For the whole expression to remain negative, the rest of it must then be positive and hence
\[
\frac{\sin \phi+ \sin(\theta-\phi) -x\sin (\theta+\phi)}{\sin \phi+\sin(\theta-\phi) -u \sin(\theta+\phi)} \geq 0.
\]
But this implies that the numerator and the denominator must have the same sign and this concludes our proof.
\end{proof}

In the proof of this proposition, we use Lemma \ref{lemmaone}. 
In Section \ref{degenerate} it will be clear why only for the values in the lemma that precise analysis of bisectors intersection makes sense, due to the collapsing of some ridges.

\section{Main theorem}\label{mainthm}

In this section, we will use the Poincaré polyhedron theorem to prove that $\Gamma=\langle R_1,R_2,A_1 \rangle$ is discrete, give a presentation for it and prove that $D$ constructed in the previous sections is its fundamental domain.
More precisely, we will prove the following:

\begin{theo} \label{main}
Let $\Gamma$ be the subgroup of $PU(H)$ characterised by $p$ and $k$ as explained in Section \ref{list} and such that the two parameters have any of the values in Table \ref{tablelist}. 
Then the polyhedron $D$ of the previous section is a fundamental domain for $\Gamma$, up to making some vertices collapse according to the following rule:
\begin{center}
\begin{tabular}{|m{3cm}|m{3.5cm}|m{6.8cm}|}
\hline
Value of $p$ & Value of $k$ & Fundamental polyhedron \\
\hline
$0<p\leq 6$ \newline $(d<0)$
& $k \leq \frac{2p}{p-2}$ \newline $(l<0)$ \newline (large phase shift)
& The polyhedron $D$ constructed in Section \ref{constr} with triples of vertices
$\textbf z_3, \textbf z_4, \textbf z_5$;  
$\textbf z_6, \textbf z_7, \textbf z_8$;  
$\textbf z_9, \textbf z_{10}, \textbf z_{11}$ and 
$\textbf z_{12}, \textbf z_{13}, \textbf z_{14}$ each collapsed to a single vertex is a fundamental domain.
This is the same as the polyhedron constructed in \cite{boadiparker}.  \\
\hline
$0<p\leq 6$ \newline $(d<0)$
& $k> \frac{2p}{p-2}$ \newline $(l>0)$ \newline (small phase shift)
& The polyhedron $D$ constructed in Section \ref{constr} with triples of vertices
$\textbf z_3, \textbf z_4, \textbf z_5$ each collapsed to a single vertex is a fundamental domain.
This is the same polyhedron obtained in \cite{type2}, as we will explain in Section \ref{type2} \\
\hline
$p>6$ \newline $(d>0)$
& $k \leq \frac{2p}{p-2}$ \newline $(l<0)$ \newline (large phase shift)
& The polyhedron $D$ constructed in Section \ref{constr} with triples of vertices 
$\textbf z_6, \textbf z_7, \textbf z_8$;  
$\textbf z_9, \textbf z_{10}, \textbf z_{11}$ and 
$\textbf z_{12}, \textbf z_{13}, \textbf z_{14}$ each collapsed to a single vertex is a fundamental domain.
This is the same as the polyhedron constructed in \cite{livne}.  \\
\hline
$p>6$ \newline $(d>0)$ 
& $k> \frac{2p}{p-2}$ \newline $(l>0)$ \newline (small phase shift)
& The polyhedron $D$ constructed in Section \ref{constr} is a fundamental domain. \\
\hline
\end{tabular}
\end{center}
\end{theo}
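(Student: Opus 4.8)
The plan is to apply the Poincaré polyhedron theorem to $D$ (and, in the degenerate regimes of the table, to its collapse), with the eight maps $P^{\pm 1}, J^{\pm 1}, R_1^{\pm 1}, R_2^{\pm 1}$ as side-pairing maps. Note first that $A_1 = P^{-1}J$, so $\langle P, J, R_1, R_2\rangle = \langle R_1, R_2, A_1\rangle = \Gamma$, hence the side-pairing group is $\Gamma$ itself; once Poincaré's theorem applies we simultaneously obtain discreteness of $\Gamma$, finiteness of $\vol(\Gamma\backslash\hc)$, that $D$ (suitably collapsed) is a fundamental domain, and a presentation of $\Gamma$. The antiholomorphic involution $\iota$ of Lemma~\ref{iotaaction}, whose conjugation behaviour is recorded in Remark~\ref{iotaconjug}, is a symmetry of the vertex set normalizing $\Gamma$; since it is a symmetry of $D$ exchanging the eight bisectors in pairs, it halves all the verifications below.

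\textbf{Side pairings.} By the naming convention fixed after Lemma~\ref{lemmapoly}, the map $T$ sends the bisector $B(T)$ to $B(T^{-1})$, and $S(T) := B(T)\cap\overline D$. What must be checked is that each $T$ carries the \emph{side} $S(T)$ isometrically onto $S(T^{-1})$, and that $T(D)$ lies locally on the opposite side of $B(T^{-1})$ from $D$; this is the condition allowing the images of $D$ to tile a neighbourhood of the interior of each side. Both reduce to tracking the images of the vertices (using the vertex tables and the line equations $L_{ij}$) together with the coordinate inequalities collected in Remark~\ref{polydef}. For instance $P$ fixes the $z_1$-coordinate condition's boundary and, via \eqref{w1}--\eqref{z2}, exchanges the $\mathbf w$-ranges with the $\mathbf z$-ranges in the way dictated by \eqref{polyhedron}. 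Using $\iota$ only four of the eight cases need genuine computation.

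\textbf{Ridge cycles and the main obstacle.} For each ridge $F(T,S)$ one follows the cycle of side-pairing maps obtained by alternately reflecting across the two bisectors meeting along it, computes the cycle transformation, and checks that it is elliptic of finite order $m$ fixing the complex line (S-ridge), Lagrangian plane (M-ridge), or Giraud disc (G-ridge) containing the ridge, with the dihedral angles around the ridge summing to $2\pi/m$. For the S- and M-ridges this is a finite computation inside a totally geodesic subspace, using the explicit matrices for $P, J, R_1, R_2$; the orders $m$ that occur are exactly the parameters $p,k,l,d$ of Table~\ref{tablelist} (and small fixed values $2,3$), which is why these were flagged as the orders of distinguished elements of $\Gamma$. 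The hard part is the local tessellation condition for the G-ridges, which lie in Giraud discs that are \emph{not} totally geodesic, so one cannot argue inside a hyperbolic subspace: here I would invoke Giraud's theorem (Section~\ref{bisectors}) to confirm that the three bisectors through such a disc are precisely those the cycle visits, parametrise the disc, and verify directly that the successive images of $D$ sweep out a full neighbourhood of the interior of the ridge without overlap, controlling the relevant arguments by Lemmas~\ref{lemmasquare} and~\ref{lemmaone} and the bisector-intersection propositions of Section~\ref{constr}. When $k\notin\Z$ the naive cycle transformation about one of these ridges fails to be of finite order, and a slightly different argument is required --- passing to the appropriate power, or recognising the local stabiliser as generated by a complex reflection of the correct order, exactly as in the analogous situation in \cite{livne} and \cite{boadiparker}.

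\textbf{Completeness, degenerate cases, presentation.} Completeness of $D$ is checked at the ideal vertices, which occur precisely when $l$ or $d$ equals $\infty$ in Table~\ref{tablelist}; there one verifies the usual parabolic consistency condition on the cusp cross-section. For the collapse table in the statement, the vertex-coordinate formulas force coincidences in the stated regimes: e.g. when $p\le 6$ one has $2\cos\theta-1\le 0$, so the $z_2$-coordinates of $\textbf z_3,\textbf z_4,\textbf z_5$ all vanish and these three vertices coalesce, and similarly for the triples $\textbf z_6,\textbf z_7,\textbf z_8$, $\textbf z_9,\textbf z_{10},\textbf z_{11}$, $\textbf z_{12},\textbf z_{13},\textbf z_{14}$ when $l\ge 0$ fails, i.e. $l<0$ (large phase shift); in these cases the affected S- and M-ridges degenerate to lower-dimensional cells, $D$ is replaced by its collapse, and the side pairings and ridge cycles descend unchanged so that Poincaré's theorem still applies. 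Matching the combinatorics of the collapsed polyhedra with those of \cite{livne}, \cite{boadiparker} and \cite{type2} yields the identifications in the last column. Finally, reading the reflection relations off the side pairings and the cycle relations off the ridge cycles gives the promised presentation of $\Gamma$, completing the proof.
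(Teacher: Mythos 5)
Your overall architecture --- Poincar\'e's polyhedron theorem applied to $D$ with $P^{\pm1}, J^{\pm1}, R_1^{\pm1}, R_2^{\pm1}$ as side pairings, the involution $\iota$ halving the verifications, and the collapse of vertex triples read off from the degeneration of the coordinates --- is the paper's. But you have misplaced the difficulty in the ridge analysis. The G-ridges in Giraud discs are the \emph{easiest} case here: Lemma \ref{lemmapoly} already expresses each of the two defining inequalities of $D$ near such a ridge as ``closer to a complex line $L$ than to its images,'' so $D$ and the two relevant images of $D$ sit in the three regions of points closest to one of $L$, $T(L)$, $T^{-1}(L)$ and tessellate a neighbourhood essentially for free; no parametrisation of the Giraud disc and no appeal to its failure to be totally geodesic is needed. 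The genuinely hard case, which your proposal dismisses as ``a finite computation inside a totally geodesic subspace,'' is the S-ridges contained in complex lines ($F(P,P^{-1})$, $F(R_1,J^{-1})$ and their orbits): there the cycle transformation is a complex reflection through an angle $2\pi/m$ with $m\in\{p,k,l,d\}$, and one must pass to coordinates adapted to the polar vector of the line (the $\xi$-coordinates of the paper), show that $D$ together with specific images fills a sector of angular width exactly $2\pi/m$ in $\arg\xi_1$, prove disjointness of these images using Lemma \ref{lemmaone} and the structure of the sides, and only then rotate by the cycle map, using integrality of $m$ to close up. That is where most of the computation in the paper lives.

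Second, your treatment of the non-integer $k$ case does not work as stated. The obstruction is that $D\cup P^{-1}(D)$ fills a sector of width $2\phi$ in $\arg z_1$ around $F(P,J)$ while the cycle map $A_1$ rotates by $2\phi$, so a full neighbourhood is tiled only if $k=\pi/\phi$ is an integer; ``passing to an appropriate power'' of the cycle transformation cannot repair this, since no power of a rotation by $2\phi$ produces a rotation by $\phi$. The paper's fix is structural: when $k=p/2$ one has $\phi=\theta$, the cone points $v_0$ and $v_1$ acquire equal angles, and a new move $S_1$ with $S_1^2=A_1$ becomes available; one then invokes the Poincar\'e theorem \emph{for coset decompositions} with the finite cyclic group $\Upsilon=\langle K\rangle$, $K=R_1R_2S_1$ of projective order $4$, which preserves $D$, permutes its sides compatibly with the side pairings, reduces the tessellation checks to the ridges of a single side, and yields a different presentation. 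This extra symmetry and the coset version of the theorem are genuinely new ingredients that your proposal does not supply.
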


The table in the theorem is strictly related to Table \ref{tablelist}.
The first three groups, in fact, correspond exactly to the values of the Deligne-Mostow lattices of first, second and third (Livné lattices) type presented in the table. 
Lattices of the fourth and fifth type are in the fourth line of the table in the theorem.

\begin{rk} 
The condition $k \lesseqgtr \frac{2p}{p-2}$ is equivalent to saying that the phase shift parameter, as described in Section \ref{list}, is smaller or greater than $\frac{1}{2}- \frac{1}{p}$. 
\end{rk}

We also remark that the equality cases have to be treated a bit more carefully. 
For $p=6$ the vertex obtained collapsing $\textbf z_3, \textbf z_4, \textbf z_5$ is on the boundary of the complex hyperbolic space. 
These values are discussed in \cite{boadiparker} and can be included in the case of the lower values. 
The same discussion is true for the critical value of $k$ and the first group is the only case where such an equality actually holds. 

\subsection{Group presentations and Euler characteristic}

To prove Theorem \ref{main} we will use the Poincaré polyhedron theorem.
Its power lies not only in the fact that it allows to prove that $D$ is a fundamental domain for $\Gamma$, but because it also gives a presentation for the group. 

\begin{theo}\label{presentation}
Suppose $(p,k)$ is one of the pairs in Table \ref{tablelist}. 
Then the group $\Gamma$ generated by the side pairing maps of $D$, i.e. $P,J,R_1,R_2$ as described has presentation 
\[
\Gamma=\left\langle J,P,R_1,R_2 \colon
\begin{array}{l l}
J^3=P^{3d}=R_1^p=R_2^p=(P^{-1}J)^k=(R_2R_1J)^l=I, \\
R_2=P R_1P^{-1}=JR_1J^{-1}, \ P=R_1R_2
\end{array} \right\rangle,
\]
with each relation in the first line holding only when the order of the map is positive and finite.
\end{theo}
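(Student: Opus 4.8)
The plan is to derive the presentation directly from the Poincaré polyhedron theorem, which is the tool already being used to prove Theorem \ref{main}. Recall that the theorem, applied to a polyhedron with side-pairing maps satisfying the hypotheses, yields a presentation in which the generators are the side-pairing maps and the relators are exactly the \emph{reflection relations} (one for each side that is paired to itself, or more generally for each pair $\{S(T),S(T^{-1})\}$ identified by $T$) together with the \emph{cycle relations} (one for each ridge cycle). So the first step is to list the side-pairing maps: by construction in Section \ref{sides} the eight sides are $S(P),S(P^{-1}),S(J),S(J^{-1}),S(R_1),S(R_1^{-1}),S(R_2),S(R_2^{-1})$, paired so that $T$ sends $S(T)$ to $S(T^{-1})$; hence the generators are $P,J,R_1,R_2$, which are indeed the maps appearing in the statement, and these suffice because $A_1=P^{-1}J$ and the original generators $R_1,R_2,A_1$ of $\Gamma$ are recovered.

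Next I would read off the cycle relations from the ridge analysis of Section \ref{ridges}. For each ridge $F(T,S)$ one forms the cycle of side-pairing maps that cyclically permutes the copies of $D$ around that ridge, and the cycle relation says a suitable power of the resulting word is the identity, with the power equal to the number of copies needed to close up the cycle (an integer determined by the ridge angle, i.e.\ by $p$, $k$, $l$ or $d$ through the trigonometry of Section \ref{constr}). The S-ridges lying in the complex lines $L_{ij}$ produce the relations of finite order: the ridges around which $P$ (respectively $J$, $R_1$, $R_2$) acts as a complex reflection give $P^{3d}=I$ (resp.\ $J^3=I$ — this also follows from $J$ having zero trace, as noted after the definition of $J$ — $R_1^p=R_2^p=I$), while the G-ridges and the mixed ridges produce the relations $(P^{-1}J)^k=I$ (the order-$k$ element $A_1^{-1}$-type map, matching the definition $1/k$ via $\phi=\pi/k$) and $(R_2R_1J)^l=I$ (order $l$, matching $1/l = 1/2-1/p-1/k$). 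The word-level identities $R_2 = PR_1P^{-1}=JR_1J^{-1}$ and $P=R_1R_2$ come partly from the reflection relations and partly from the defining relations among the moves already established in Section~\ref{list}: $P=R_1R_2$ is the definition of $P$, and $R_2=PR_1P^{-1}$, $R_2 = JR_1J^{-1}$ express that conjugating the reflection $R_1$ by $P$ or $J$ gives the reflection $R_2$ in the image line, which is exactly what the side-pairing $P:S(P)\to S(P^{-1})$ (resp.\ $J:S(J)\to S(J^{-1})$) records about the ridges on those sides.

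The final step is bookkeeping: check that every ridge cycle relation obtained from the tables in Section \ref{ridges} is a consequence of the seven relations listed (so that no relation is missing), and conversely that each listed relation does arise as a genuine cycle or reflection relation (so that none is spurious). Here one uses Remark \ref{iotaconjug} and the $\iota$-symmetry of Lemma \ref{iotaaction} to halve the work — the symmetry pairs up ridges and hence identifies pairs of cycle relations — and one uses the degenerate-vertex discussion of Section \ref{degenerate} to handle the cases where a triple of vertices has collapsed, so that some S-ridges have shrunk to a point and the corresponding relation either disappears or has its exponent become infinite; this is precisely the meaning of the clause ``each relation in the first line holding only when the order of the map is positive and finite'' (the orders $p,k$ are always finite and positive, while $3d$ and $l$ can be negative or infinite, in which case $P^{3d}=I$ or $(R_2R_1J)^l=I$ is simply dropped). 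The main obstacle I anticipate is not any single calculation but the completeness check: verifying that the full list of ridge cycles — there are many ridges of three combinatorial types (S, M, G) — collapses down to exactly these seven relations, i.e.\ that the M-ridge and G-ridge cycles contribute only consequences of the S-ridge relations. This is where the careful combinatorial structure of the polyhedron, together with the $\iota$-symmetry, has to be exploited, and it is the part that most closely parallels (and extends) the corresponding verifications in \cite{livne} and \cite{boadiparker}.
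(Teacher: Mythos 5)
Your proposal follows essentially the same route as the paper: the presentation is read directly off the Poincar\'e polyhedron theorem, with the side-pairing maps $P,J,R_1,R_2$ as generators, empty reflection relations, and exactly one cycle relation per ridge cycle, as tabulated in Section~\ref{cyclerel}. Only your bookkeeping is off in detail --- for instance $(P^{-1}J)^k=I$ comes from the S-ridge $F(P,J)$ and $J^3=I$ from the G-ridge $F(J,J^{-1})$, and the conjugation relations $R_2=PR_1P^{-1}=JR_1J^{-1}$ and $P=R_1R_2$ are themselves cycle relations of the M- and G-ridges (not consequences of reflection relations or of the S-ridge relations, so there is no ``collapsing'' to check: the nine cycles yield precisely the nine listed relations) --- but these mislabellings do not affect the argument.
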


A proof of this theorem comes out automatically while using the Poincaré polyhedron theorem  to prove Theorem \ref{main} and is given in Section \ref{cyclerel}.

To conclude this section, we calculate the orbifold Euler characteristic $\chi( \hc / \Gamma)$.
The standard Euler characteristic is calculated taking the alternating sum of the number of cells of each dimension.
As explained in \cite{survey}, the orbifold Euler characteristic is calculated similarly, with the difference that now each orbit of cells is counted with a weight, which is the reciprocal of the order of its stabiliser. 
To do that we consider the following table, in which we consider the polyhedron $D$ constructed in Section \ref{constr} and we list the orbits of facets by dimension, calculate the stabiliser of the first element in the orbit and give its order. 

\begin{longtable}{|c|c|c|}
\hline
Orbit of the facet & Stabiliser & Order \\
\hline 
$\textbf{z}_1, \textbf z_2$ & $\langle A_1,R_1 \rangle$ & $kp$ \\
$\textbf{z}_3, \textbf z_4, \textbf z_5$ & $\langle P^3,R_1 \rangle$ & $pd$ \\
$\textbf{z}_6, \textbf z_{10}, \textbf z_{13}$ & $\langle A_1',R_1 \rangle$ & $pl$ \\
$\textbf{z}_{8}, \textbf z_{7}, \textbf z_{9}, \textbf{z}_{11}, \textbf z_{12}, \textbf z_{14}$ & $\langle A_1, A_1' \rangle$ & $kl$ \\
\hline
$\gamma_{1,3}, \gamma_{2,4}$ & $\langle R_1 \rangle$ & $p$\\
$\gamma_{1,6}, \gamma_{2,10}$ & $\langle R_1 \rangle$ & $p$\\
$\gamma_{3,6}, \gamma_{5,13}, \gamma_{4,10}$ & $\langle R_1 \rangle$ & $p$\\
$\gamma_{2,8}, \gamma_{1,9}, \gamma_{1,12}, \gamma_{2,14}$ & $\langle A_1 \rangle$ & $k$ \\
$\gamma_{7,11}, \gamma_{9,12}, \gamma_{8,14}$ & $\langle JR_1 \rangle$ & $2k$\\
$\gamma_{9,10}, \gamma_{12,13}, \gamma_{6,7}, \gamma_{13,14}, \gamma_{6,8}, \gamma_{10,11}$ & $\langle A_2' \rangle$ & $l$\\
$\gamma_{7,8}, \gamma_{12,14}, \gamma_{9,11}$ & $\langle JR_1^{-1} \rangle$ & $2l$ \\
$\gamma_{4,5}, \gamma_{3,5}, \gamma_{3,4}$ &  $\langle R_2P \rangle$& $2d$ \\
\hline
$\begin{array}{l l}
F(P,J), & F(P^{-1}, J^{-1}) 
\end{array}$ 
& $A_1$ & $k$ \\
$F(R_1, R_1^{-1}) $
& $R_1$ &  $p$ \\
$F(R_2, R_2^{-1}) $
& $R_2$ & $p$ \\
$\begin{array}{l l l l}
F(P,R_1), & F(P,R_1^{-1}), & F(P^{-1}, R_2), & F(P^{-1}, R_2^{-1})
\end{array}$
& 1 & 1 \\
$\begin{array}{l l l l}
F(J,R_1), & F(J,R_1^{-1}), & F(J^{-1}, R_2), & F(J^{-1}, R_2^{-1})
\end{array}$
& 1 & 1 \\
$\begin{array}{l l l}
F(P,R_2), & F(R_1,R_2^{-1}), & F(R_1^{-1}, P^{-1})
\end{array}$
& 1 & 1 \\
$\begin{array}{l l l}
F(J,R_2^{-1}), & F(R_1,J^{-1}), & F(R_1^{-1}, R_2)
\end{array}$
& $A_1'$ & $l$ \\
$F(J,J^{-1})$ & $J$ & 3 \\
$F(P,P^{-1})$ & $P$ & $3d$ \\
\hline 
$S(J),S(J^{-1})$ & 1 & 1 \\
$S(R_1),S(R_1^{-1})$ & 1 & 1 \\
$S(R_2),S(R_2^{-1})$ & 1 & 1 \\
$S(P),S(P^{-1})$ & 1 & 1 \\
\hline 
$D$ & 1 & 1 \\
\hline
\end{longtable}

The vertices are all contained in two orthogonal complex lines, which implies that the stabiliser is a direct product of two cyclic groups generated each by the complex reflections in these lines. 
The ridges are stabilised by the cycle relations, while the sides are fixed only by the identity, as the side pairing maps send the sides one in the other. 

To find the stabiliser of the edges requires slightly more work. 
If the map $T$ stabilises an edge, then either it will fix the endpoints or it will swap them. 
If we can find a map that swaps them, then it will generate the maps that fix them. 
If the vertices are not in the same orbit, then there is no map that swaps them and analysing the action of the side pairing maps (i.e. the generators of the group) of the vertices, we can verify that the stabilisers are as in the table. 
If they are, the same analysis will tell us if there are maps swapping the endpoints or just fixing them.
In this way it is easy to check that the stabilisers are the above. 

From the table it follows that the Euler orbifold characteristic is
\begin{align} \label{vol}
\chi (\hc / \Gamma)&= && \frac{1}{kp}+\frac{1}{pd}+ \frac{1}{pl} +\frac{1}{kl} 
-\frac{1}{p}-\frac{1}{p}-\frac{1}{p} -\frac{1}{k} -\frac{1}{2k} -\frac{1}{l} -\frac{1}{2l} -\frac{1}{2d} \notag\\ \notag
&&&+\frac{1}{k} +\frac{1}{p} +\frac{1}{p} +1 +1 +1 +\frac{1}{l} +\frac{1}{3} +\frac{1}{3d} \notag\\
&=&& \frac{1}{kp}+\frac{1}{2p} -\frac{3}{p^2}+\frac{1}{2p} -\frac{1}{p^2} -\frac{1}{pk} +\frac{1}{2k}- \frac{1}{pk} -\frac{1}{k^2} \notag\\
&&& -\frac{1}{p} -\frac{1}{2k} -\frac{1}{4} +\frac{1}{2p} +\frac{1}{2k} -\frac{1}{4} +\frac{3}{2p} +\frac{1}{3} +\frac{1}{6} -\frac{1}{p} \notag\\
&=&& -\frac{4}{p^2} -\frac{1}{pk} -\frac{1}{k^2}+ \frac{1}{2k}+\frac{1}{p} \notag\\
&=&& \frac{p^2 +12p -60}{16p^2}-\frac{t^2}{4},
\end{align}
where for the second equality we used $\frac{1}{l}=\frac{1}{2}-\frac{1}{p}-\frac{1}{k}$ and 
$\frac{1}{d}=\frac{1}{2} -\frac{3}{p}$, while in the last one we used $t=-\frac{1}{2}+\frac{1}{p}+\frac{2}{k}$.

This value is coherent with the one found by Sauter in Theorem 5.3 of \cite{sauter}, up to a scalar multiplicative factor.
This factor is related to the fact that he considers different groups related to the ones we have. 
In Section 7 of \cite{sauter} he explains the exact relation between the different groups he considers and shows how the multiplicative factor appears by calculating the volume for the groups we are considering, too.
Let us remark that this value is also consistent with those found in \cite{survey} for the polyhedra obtained by collapsing vertices as in Theorem \ref{main}.

\subsection{Poincaré's polyhedron theorem} \label{poincare}

We will now present the version of the  Poincaré polyhedron theorem that we will use, following the one in \cite{livne}.

\begin{defin}
A \emph{combinatorial polyhedron} is a cellular space homeomorphic to a compact polytope, with ridges contained in exactly two sides.
A \emph{polyhedron} $D$ is the realisation of a combinatorial polyhedron as a cell complex in a manifold $X$.
A polyhedron is \emph{smooth} if its cells are smooth.
By convention, we will take the polyhedron to be open. 
\end{defin}

For the Poincaré polyhedron theorem we will need some conditions on the sides and on the ridges of the polyhedron. 
We will now present such conditions. 
A smooth polyhedron satisfying all of them is called a \emph{Poincaré polyhedron}.

Let $D$ be a smooth polyhedron in $X$ with sides $S_j$, side pairing maps $T_j \in \is (X)$ such that:

(S.1) For each side $S_i$ of $D$, there is another side $S_j$ of $D$ and a side-pairing map $T_i$ such that $T_i(S_i)=S_j$.

(S.2)[\emph{reflection relation}] If $T_i(S_i)=S_j$, then $T_i=T_j^{-1}$. This implies that if $i=j$, then $T_i^2=\id$, so if a map sends a bisector to itself, then it fixes it pointwise. The relations $T_i=T_j^{-1}$ are called reflection relations.

(S.3) $T_i^{-1}(D) \cap D= \emptyset$.

(S.4) $T_i^{-1}(\overline D) \cap \overline D= S_i$.

(S.5) There are only finitely many sides in $D$ and each side contains only finitely many ridges.

(S.6) There exists $\delta >0$ such that for each pair of disjoint sides, they are at distance at least $\delta$.

To list the conditions on the ridges we first need to explain what the cycle transformations are. 
Let $S_1$ be a side of $D$ and $F$ be a ridge in the boundary of $S_1$. 
Let also $T_1$ be the side pairing map associated to $S_1$ and consider the image under $T_1$ of the ridge $F$. 
As we remarked in the definition, each ridge is contained in the boundary of exactly two sides. 
$T_1(F)$ will hence be in the boundary of $T_1(S_1)$, but also in the one of some other side $S_2$. 
We call $T_2$ the side-pairing map associated to $S_2$ and we apply it to the ridge $T_1(F)$. 
Iterating this procedure, we get a sequence of ridges, a sequence of sides $S_i$ and a sequence of maps $T_i$. 
Since we know that the amounts of sides and of ridges are finite, these sequences must be periodic. 
Let $k$ be the smallest integer such that all three sequences are periodic of period $k$. 
Then $T_k \circ \dots \circ T_2 \circ T_1(F)=F$ and we call $T_k \circ \dots \circ T_2 \circ T_1$ the \emph{cycle transformation} at the ridge $F$.
Now, for $T=T_k \circ \dots \circ T_2 \circ T_1$ and $m$ an integer, we define:
\begin{align*}
U_0&=1, & U_1&=T_1, 
& \dots & & U_{k-1}&=T_{k-1} \circ \dots \circ T_2 \circ T_1,\\
U_k&=T, & U_{k+1}&=T_1 \circ T 
& \dots & & U_{2k-1}&=T_{k-1} \circ \dots \circ T_2 \circ T_1 \circ T,\\
& \vdots & & \vdots & & & & \vdots \\
U_{(m-1)k}&=T^{m-1}, &  U_{(m-1)k+1}&=T_1 \circ T^{m-1},
& \dots & & U_{mk-1}&=T_{k-1} \circ \dots \circ T_1 \circ T^{m-1}.
\end{align*}
The ridge conditions are then the following.

(F.1) Every ridge is a submanifold of $X$, homeomorphic to a ball of codimension 2.

(F.2) For each ridge $F$ with cycle transformation $T$, there exists an integer $\ell$ such that $T^\ell$ restricted to $F$ is the identity. This means that a power of $T$ fixes $F$ pointwise.

(F.3)[\emph{cycle relations}] For each ridge $F$ with cycle transformation $T$, it exists an integer $m$ so that $(T^\ell)^m$ is the identity on the whole space $X$.  Moreover, for the $U_i$ defined previously, the preimages $U_i^{-1}(D)$, for $i=0, \dots, m \ell k-1$ are disjoint and the closures of such polyhedra $U_i^{-1}(\overline D)$ cover a neighbourhood of the interior of $F$. In this case we say that $D$ and its images tessellate a neighbourhood of $F$. The relations $T^{\ell m}=\id$ are called cycle relations. 

The Poincaré polyhedron theorem now states

\begin{theo}
Let $D$ be a Poincaré polyhedron with side-pairing transformations $T_j\in \Sigma$, satisfying side conditions \emph{(S.1)--(S.6)} and ridge conditions \emph{(F.1)--(F.3)}. 
Then the group $\Gamma$ generated by the side-pairing transformations is a discrete subgroup of $\is(X)$ and $D$ is a fundamental domain for its action.
A presentation for such group is given by
\[
\Gamma=\left\langle \Sigma \colon
\begin{array}{l l}
\text{reflection relations} \\
\text{cycle relations}
\end{array} \right\rangle.
\]
\end{theo}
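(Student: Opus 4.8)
The plan is to prove this by the classical strategy for Poincaré-type theorems: construct an abstract space by gluing copies of $\overline D$ according to the side-pairing maps, show this space is a manifold that is isometrically covered by (in fact, via a developing map, covers) $X$, and then use the simple connectedness of $X$ to conclude that the copies of $\overline D$ tessellate $X$.

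First I would form the group $\Gamma_0$ presented abstractly by the generators $\Sigma$ subject only to the reflection relations and the cycle relations, and consider the disjoint union $\coprod_{g \in \Gamma_0} \{g\} \times \overline D$. On this union I would impose the equivalence relation generated by $(g, x) \sim (g T_i, x)$ whenever $x \in S_i$, where $T_i$ is the side-pairing map carrying $S_i$ to $S_j = T_i(S_i)$; condition (S.2) guarantees this relation is symmetric, and conditions (S.3)--(S.4) guarantee that in the interior of $D$ and in the relative interior of each side the identification is exactly as expected, so that the quotient $\widetilde X$ is, away from the codimension-$2$ skeleton, a manifold locally modelled on $X$, on which $\Gamma_0$ acts by left translation with quotient $\overline D$.

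The heart of the argument is to check that $\widetilde X$ is a manifold also along the ridges and the lower-dimensional cells. Along a ridge $F$ this is precisely what the cycle conditions are designed for: following the cycle of sides through $F$ produces the cycle transformation $T$, condition (F.2) gives an $\ell$ with $T^\ell$ fixing $F$ pointwise, and condition (F.3) asserts $(T^\ell)^m = \id$ together with the statement that the finitely many images $U_i^{-1}(\overline D)$ fit around $F$ without overlap and cover a neighbourhood of its interior — which is exactly the statement that $\widetilde X$ is locally $X$ near such points. For cells of codimension $\geq 3$ one argues by induction on codimension, using (F.1) and the fact that the link of such a cell is assembled from the links coming from the ridges containing it. Then I would build the \emph{developing map} $\mathcal D \colon \widetilde X \to X$ by analytic continuation starting from the inclusion $\overline D \hookrightarrow X$; it is a local isometry by construction and is equivariant for the tautological homomorphism $\Gamma_0 \to \is(X)$ sending each generator to its associated isometry.

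Finally I would show $\mathcal D$ is a covering map. Completeness of $\widetilde X$ follows from (S.5) and (S.6): the uniform lower bound $\delta$ on distances between disjoint sides prevents a Cauchy sequence from escaping through infinitely many copies of $\overline D$ in bounded time, so $\widetilde X$ is complete, and a local isometry out of a complete space onto a connected Riemannian manifold is a covering. Since $X = \hc$ is simply connected, $\mathcal D$ is a homeomorphism; hence the translates $g\,\overline D$, $g \in \Gamma_0$, tile $X$, the tautological map $\Gamma_0 \to \is(X)$ is injective with discrete image the group $\Gamma$ generated by the $T_j$, and $D$ is a fundamental domain. The injectivity of $\Gamma_0 \to \Gamma$ is exactly the assertion that the only relations among the side-pairing maps are the reflection and cycle relations, which yields the stated presentation. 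I expect the main obstacle to be the manifold-point verification along the ridges combined with the completeness/covering step: making precise that conditions (F.1)--(F.3) and (S.5)--(S.6) are exactly what is needed for "$\widetilde X$ is a complete manifold isometrically covering $X$" is the technical core, while the rest is bookkeeping.
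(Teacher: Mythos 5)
The paper does not actually prove this theorem: it is stated as a known result, imported in the form needed here from \cite{livne} (with closely related versions in \cite{mostow} and \cite{nonarithm}), and the author's contribution consists in verifying its hypotheses for the specific polyhedron $D$, not in proving the theorem itself. So there is no proof in the paper to compare yours against step by step. That said, your sketch is the standard classical argument that underlies all of those references: form the abstract group $\Gamma_0$ presented by the reflection and cycle relations, glue $\Gamma_0\times\overline D$ along the side pairings, check that the quotient $\widetilde X$ is locally modelled on $X$ (conditions (S.1)--(S.4) handle codimension $\le 1$, conditions (F.1)--(F.3) handle the ridges), then use completeness coming from (S.5)--(S.6) to show the developing map is a covering and simple connectivity of $X=\hc$ to conclude it is a homeomorphism; tiling, discreteness, injectivity of $\Gamma_0\to\Gamma$ and hence the presentation all follow. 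The outline is correct. The place where your sketch is thinnest is the locus of codimension $\ge 3$: "induction on codimension via links" is genuinely delicate to make rigorous, and the cleaner route taken in careful expositions is to delete the codimension-$\ge 3$ skeleton from both $\widetilde X$ and $X$, prove the covering statement on the complement, and use the fact that removing a subset of real codimension $\ge 3$ from $\hc$ does not affect simple connectivity before extending back over the deleted cells. Two smaller remarks: for the polyhedron of this paper (S.6) is vacuous since every pair of sides meets, and because $D$ is homeomorphic to a compact polytope the completeness of $\widetilde X$ reduces to local finiteness of the tiling, which is exactly where (S.5)--(S.6) enter and should be spelled out rather than invoked via the informal "Cauchy sequences cannot cross infinitely many tiles" picture.
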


\subsection{Proof of the main Theorem \ref{main}}

In this section we will prove that all the hypothesis of the Poincaré polyhedron theorem hold and explain how to use it to prove Theorem \ref{main}.

\subsubsection{Side pairing maps}

Let us now consider the maps $J,P,R_1$ and $R_2$. 
These maps pair the eight sides of the polyhedron, as shown in Figure \ref{sidepairing}. 
In this section we want to show that these side pairing maps verify the conditions (S.1)--(S.6). 

Conditions (S.1), (S.2), (S.5) follow clearly from our construction of the sides. 
Also, (S.6) is an empty condition, because each pair of sides of our polyhedron intersects. 
The following proposition shows that conditions (S.3) and (S.4) are verified by the sides of $D$. 

\begin{prop}
Let $T$ be one of $J^{\pm1},P^{\pm1},R_1^{\pm1}$ and $R_2^{\pm1}$. 
Then $T^{-1}(D) \cap D = \emptyset$.
Moreover, $T^{-1}(\overline D) \cap \overline D =S(T)$. 
\end{prop}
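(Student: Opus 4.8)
The plan is to use the description of $D$ from Lemma \ref{lemmapoly} and Remark \ref{polydef}: $D$ is cut out by eight inequalities, one attached to each of the eight bisectors $B(P^{\pm1}),B(J^{\pm1}),B(R_1^{\pm1}),B(R_2^{\pm1})$, and these split into four conditions on the argument of $z_1$ (from $B(P),B(J)$), of $z_2$ (from $B(R_1),B(R_1^{-1})$), of $w_1$ (from $B(P^{-1}),B(J^{-1})$) and of $w_2$ (from $B(R_2),B(R_2^{-1})$). The naming convention means $T$ carries $B(T)$ to $B(T^{-1})$, and $D$ sits on opposite sides of $B(T)$ and $B(T^{-1})$ under the identification below. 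I would write out the case $T=P$ in detail and note the remaining seven are identical; the antiholomorphic symmetry $\iota$ of Remark \ref{iotaconjug}, which preserves $D$, also lets one deduce the $T^{-1}$ cases from the $T$ cases.

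For the disjointness $T^{-1}(D)\cap D=\emptyset$ the key observation is that $T^{-1}$ moves a single coordinate-function by a fixed unimodular factor: directly from $\textbf{w}=P^{-1}(\textbf{z})$, the $\textbf{z}$-coordinates of $P^{-1}(\textbf{z})$ are $(w_1,w_2)$ and the $\textbf{w}$-coordinates of $P(\textbf{z})$ are $(z_1,z_2)$; moreover $R_1$ and $A_1=\mathrm{diag}(e^{2i\phi},1,1)$ are diagonal in the $\textbf{z}$-chart, $R_2=PR_1P^{-1}$ is diagonal in the $\textbf{w}$-chart, and $J=PA_1$ gives $\textbf{w}(J(\textbf{z}))=A_1\textbf{z}$ and $\textbf{z}(J^{-1}(\textbf{z}))=A_1^{-1}\textbf{w}(\textbf{z})$. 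For $T=P$ this says the first $\textbf{z}$-coordinate of $P^{-1}(\textbf{z})$ is $w_1$; so if $\textbf{z}\in D$, then $\im(w_1)>0$ (the $B(P^{-1})$-inequality) forces $\im\bigl(z_1(P^{-1}(\textbf{z}))\bigr)>0$, which violates the $B(P)$-inequality $\im(z_1)<0$, whence $P^{-1}(\textbf{z})\notin D$. Running the same computation on closures gives $\im(z_1)\le 0$ on $\overline{D}$ and $\im(z_1)\ge 0$ on $P^{-1}(\overline{D})$, so $P^{-1}(\overline{D})\cap\overline{D}\subseteq\{\im(z_1)=0\}\cap\overline{D}=B(P)\cap\overline{D}=S(P)$; this establishes one half of $T^{-1}(\overline{D})\cap\overline{D}=S(T)$.

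The reverse inclusion $S(T)\subseteq T^{-1}(\overline{D})\cap\overline{D}$ is the remaining point. As $S(T)\subseteq\overline{D}$ is trivial, one needs $T(S(T))\subseteq\overline{D}$. Using the coordinate description, the four of the eight defining inequalities that lie on the same side of the chart chain $\textbf{z}\leftrightarrow\textbf{w}$ as the $B(T)$-inequality transform, at points of $T(S(T))$, into inequalities already forced on $S(T)$ (for $T=P$ these are the four conditions on $w_1,w_2$ of $P(\textbf{y})$, which become conditions on $z_1(\textbf{y}),z_2(\textbf{y})$ and follow from $z_1(\textbf{y})\ge0$ real and $\arg z_2(\textbf{y})\in[0,\theta]$, both valid on $S(P)$); the other four conditions on $P(\textbf{y})$ require substituting the explicit image coordinates, i.e. formulas \eqref{z1}--\eqref{z2} (equivalently the matrix of $T$), and checking that the resulting arguments stay in the prescribed ranges, a trigonometric estimate of the same nature as Lemmas \ref{lemmasquare} and \ref{lemmaone}. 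Equivalently, granting the side-pairing property (S.1) that $T$ maps $S(T)$ bijectively onto $S(T^{-1})$, one gets $T(S(T))=S(T^{-1})\subseteq\overline{D}$ at once.

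The main obstacle is precisely this last inclusion: showing that $T(S(T))$ lands inside all of $\overline{D}$ and not merely inside the bisector $B(T^{-1})$. Because the $\textbf{z}$- and $\textbf{w}$-charts are just two links of a long chain (successive charts differing by $P$, whose order $3d$ is large), one cannot avoid writing out some image coordinates explicitly and bounding their arguments, and this bookkeeping has to be carried out keeping in mind the collapsed forms of $D$ at the degenerate parameter values of Section \ref{degenerate}, where some bounding bisectors and some faces of $D$ degenerate. The disjointness statement (S.3), by contrast, needs only the one-coordinate computation sketched above and is routine.
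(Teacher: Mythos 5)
Your argument is correct and is essentially the paper's proof: the paper phrases the disjointness through the right-hand sides of Lemma \ref{lemmapoly} ($D$ and $T(D)$ lie on opposite sides of $B(T^{-1})$ because they are closer to different complex lines), while you use the equivalent left-hand sides (the signs of $\im(z_1)$, $\im(w_1)$, etc.\ under the identification $\textbf{w}=P^{-1}(\textbf{z})$), and both arguments reduce the closure statement to the containment $T^{-1}(\overline D)\cap\overline D\subseteq B(T)\cap\overline D=S(T)$. The reverse inclusion that you flag as the remaining obstacle is likewise not spelled out in the paper's proof, which relies on the side-pairing property (S.1), i.e. $T(S(T))=S(T^{-1})\subseteq\overline D$, being declared immediate from the construction of the sides.
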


\begin{proof}
Let us take a side $S(T)$. 
By definition it is contained in a bisector $B(T)$. 
By Lemma \ref{lemmapoly}, there exist two vertices $\textbf{z}_i$ and $\textbf{z}_j$ such that $B(T)$ is the set of points equidistant from $\textbf{z}_i$ and $T^{-1}(\textbf{z}_j)$. 
By applying $T$ we get that $T(B(T))$ is $B(T^{-1})$, which is the bisector equidistant from $T(\textbf{z}_i)$ and $\textbf{z}_j$.
By Remark \ref{polydef}, the points of the polyhedron are closer to $\textbf{z}_i$ than to $T^{-1}(\textbf{z}_j)$, while the ones of $T(D)$ are closer to $T(\textbf{z}_i)$ than to $\textbf{z}_j$. 
This implies that $T^{-1}(D) \cap D = \emptyset$.

If we now also consider the boundary of the polyhedron and we pass to $T^{-1}(\overline D) \cap \overline D =S(T)$, we are considering the equality cases in Lemma \ref{lemmapoly}. 
But the lemma itself guarantees that the intersections, which corresponds to the equality cases of the lemma, are always contained in $B(T)$.
Since by definition $S(T)= \overline D \cap B(T)$, we are done.
\end{proof}

\begin{figure}[!ht]
\centering
\includegraphics[width=0.9\textwidth]{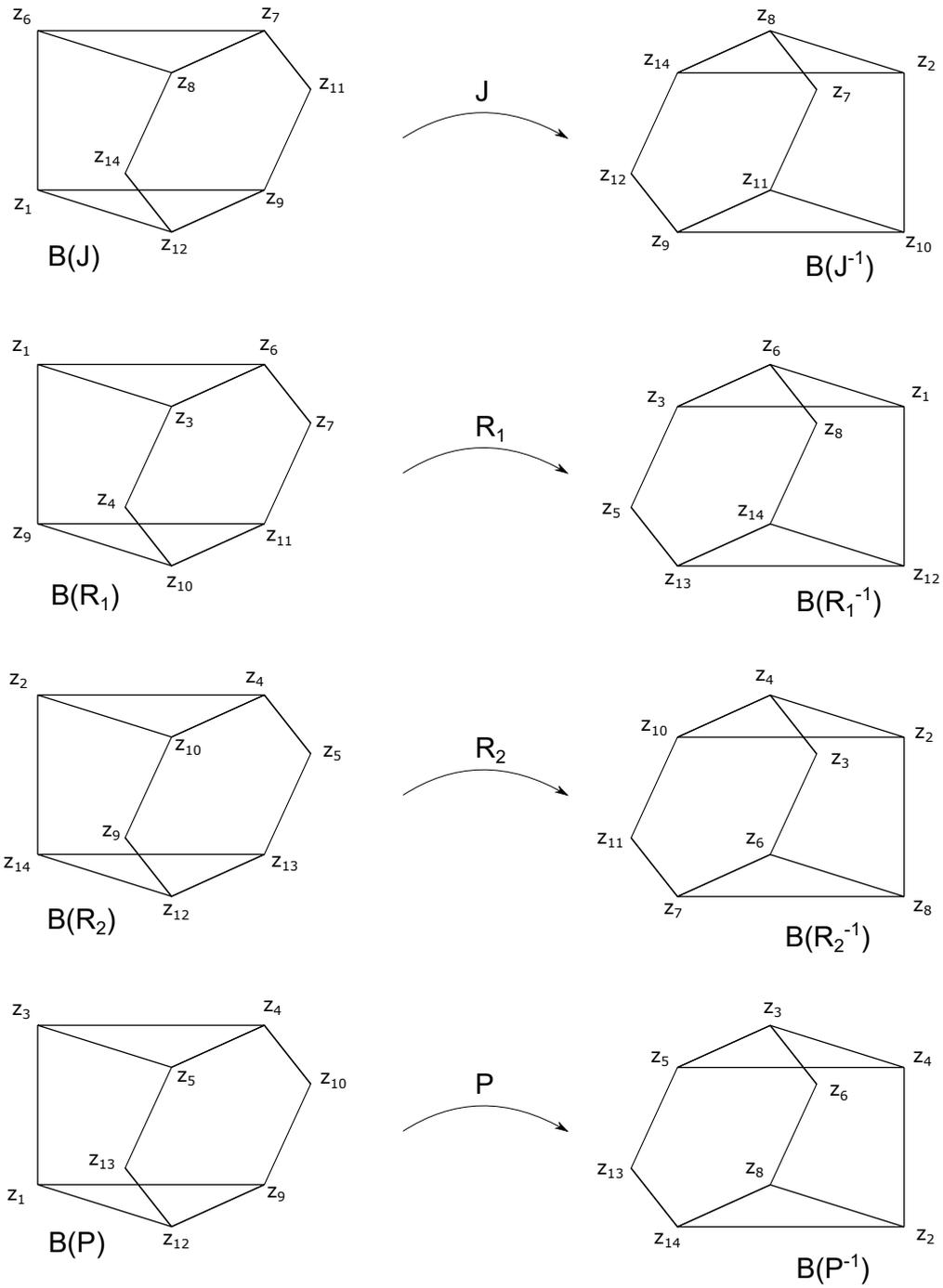}
\begin{quote}\caption{The sides of the polyhedron with the corresponding side pairing maps. \label{sidepairing}} \end{quote}
\end{figure}

\subsubsection{Cycle relations}\label{cyclerel}

It remains now to show that the ridges of the polyhedron $D$ satisfy conditions (F.1)--(F.3). 
This will be done in this and next section. 
The first condition is straightforward in this case. 
In fact it is easy to see that the edges in a ridge intersect so that they bound a polygon, giving hence a ridge homeomorphic to a ball. 
In the following table we summarise the cycle relations coming from Properties (F.2) and (F.3). Proving them is a simple calculation of the action of the transformations on the bisectors. 

\clearpage
\begin{center}
\begin{tabular}{|c|c|c|c|}
\hline
Ridges in the cycle & Transformation & $\ell$ & $m$ \\
\hline 
$\begin{array}{l l}
F(P,J), & F(P^{-1}, J^{-1}) 
\end{array}$ 
& $P^{-1}J$ & 1 & $k$ \\
$F(R_1, R_1^{-1}) $
& $R_1$ & 1 & $p$ \\
$F(R_2, R_2^{-1}) $
& $R_2$ & 1 & $p$ \\
$\begin{array}{l l l l}
F(P,R_1), & F(P,R_1^{-1}), & F(P^{-1}, R_2), & F(P^{-1}, R_2^{-1})
\end{array}$
& $R_1^{-1}P^{-1}R_2 P$ & 1 & 1 \\
$\begin{array}{l l l l}
F(J,R_1), & F(J,R_1^{-1}), & F(J^{-1}, R_2), & F(J^{-1}, R_2^{-1})
\end{array}$
& $R_1^{-1}J^{-1}R_2 J$ & 1 & 1 \\
$\begin{array}{l l l}
F(P,R_2), & F(R_1,R_2^{-1}), & F(R_1^{-1}, P^{-1})
\end{array}$
& $R_2 P^{-1} R_1 $ & 1 & 1 \\
$\begin{array}{l l l}
F(J,R_2^{-1}), & F(R_1,J^{-1}), & F(R_1^{-1}, R_2)
\end{array}$
& $R_2 R_1 J $ & 1 & $l$ \\
$F(J,J^{-1})$ & $J$ & 3 & 1 \\
$F(P,P^{-1})$ & $P$ & 3 & $d$ \\
\hline
\end{tabular}
\end{center}
This table gives immediately a proof the presentation as given in Theorem \ref{presentation}, as they correspond to the cycle relations in the Poincaré polyhedron theorem and the reflection relations are empty.
The second part of property (F.3) will be proved in the next section.

\subsubsection{Tessellation around the ridges}

We now want to prove that the images of the polyhedron under the side paring maps tessellate around neighbourhoods of the interior of the ridges. 
This is proved in different ways, depending on whether the ridges described in Section \ref{ridges} are contained in a Giraud disc, in a Lagrangian plane or in a complex line.

\paragraph*{Tessellation around ridges contained in a Giraud disc.}

The easiest case to treat is the tessellation around the ridges $F(J,J^{-1}), F(R_1,R_2^{-1}),F(P,R_2)$ and $ F(P^{-1},R_1^{-1})$, contained in Giraud discs. 
The main tool for this is Lemma \ref{lemmapoly}. The proof goes along the lines of the one in \cite{livne}.

\begin{prop}\label{JJ-}
We have the following:
\begin{itemize}
\item The polyhedron $D$ and its images under $J$ and $J^{-1}$ tessellate around the ridge $F(J,J^{-1})$.
\item The polyhedron $D$ and its images under $R_1^{-1}$ and $R_2$ tessellate around the ridge $F(R_1,R_2^{-1})$.
\item Moreover, the polyhedron $D$ and its images under $R_2^{-1}$ and $P^{-1}$ tessellate around the ridge $F(P,R_2)$.
\item Finally, the polyhedron $D$ and its images under $R_1$ and $P$ tessellate around the ridge $F(P^{-1},R_1^{-1})$.
\end{itemize}
\end{prop}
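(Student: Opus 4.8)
The plan is to reduce the tessellation statement for each of the four Giraud ridges to a purely local, one-dimensional checking argument using Lemma~\ref{lemmapoly}. Fix the ridge $F(J,J^{-1})$; the other three are handled identically after conjugating by the appropriate symmetry (or by $\iota$ via Remark~\ref{iotaconjug}). The ridge $F(J,J^{-1})$ is a hexagon contained in the Giraud disc cut out by $\im(e^{i\phi}z_1)=\im(e^{-i\phi}w_1)=0$, and it lies on the boundary of the two sides $S(J)$ and $S(J^{-1})$. By the cycle-relation table, the cycle transformation at this ridge is $J$ itself, with $\ell=3$ and $m=1$, so $J^3=\id$ and the claim to prove is that $D$, $J^{-1}(D)$, $J^{-2}(D)=J(D)$ are pairwise disjoint and their closures cover a neighbourhood of the interior of $F(J,J^{-1})$.

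The key point is that near an interior point $\textbf{z}_0$ of the ridge, being in $\overline D$ is governed, by Remark~\ref{polydef}, by a list of inequalities $\im(\cdot)\le 0$ or $\ge 0$ in the coordinates $z_1,z_2,w_1,w_2$; all of these are strict at $\textbf{z}_0$ except the two defining the Giraud disc, namely $\im(e^{i\phi}z_1)\le 0$ (the $B(J)$ condition) and $\im(e^{-i\phi}w_1)\ge 0$ (the $B(J^{-1})$ condition). So in a small neighbourhood of $\textbf{z}_0$ the polyhedron $\overline D$ is exactly the locus $\{\im(e^{i\phi}z_1)\le 0,\ \im(e^{-i\phi}w_1)\ge 0\}$, a ``quadrant'' transverse to the codimension-two ridge. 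The plan is then: first, compute how the two defining functions $\im(e^{i\phi}z_1)$ and $\im(e^{-i\phi}w_1)$ transform under $J$; since $J$ permutes the eight bisectors of the table (as recorded implicitly by the naming convention $B(T)\mapsto B(T^{-1})$ and by $J^3=\id$), $J$ sends the pair of functions defining $B(J),B(J^{-1})$ to another pair in the same list, and cyclically. Concretely one checks that $J$ maps the quadrant defining $\overline D$ locally to the adjacent quadrant defining $J(\overline D)$, sharing the face $S(J^{-1})$, and likewise $J^{-1}$ produces the third quadrant; the three quadrants have disjoint interiors and together fill a full neighbourhood of the ridge because three right-angle-type sectors with the correct angles close up. The angle bookkeeping here is exactly the content of the cycle relation $J^3=\id$ restricted to the ridge, which is already established in the table, so no genuinely new computation about $J$ is needed beyond confirming the local picture.

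The remaining ingredient, and the one requiring a little care, is to promote the \emph{infinitesimal} tessellation statement to a genuine neighbourhood: one must rule out that $D$, $J^{-1}(D)$ or $J(D)$ wraps around and re-enters the neighbourhood of $F(J,J^{-1})$ from the ``far side'', or that some other image of $D$ intrudes. This is where Lemma~\ref{lemmapoly} does the real work: disjointness of $D$ and $J^{-1}(D)$ is already contained in the side-pairing proposition ($T^{-1}(D)\cap D=\emptyset$ for $T=J$), and disjointness of $D$ and $J(D)$ follows by applying that proposition to $J^{-1}$; disjointness of $J(D)$ and $J^{-1}(D)$ follows by applying $J$ to $D\cap J^{-2}(D)=D\cap J(D)=\emptyset$. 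Covering then follows because the three closed quadrants exhaust the neighbourhood transverse to the ridge, and along the ridge the three closures agree. I expect the main obstacle to be precisely this global-versus-local issue — verifying that the local quadrant description of $\overline D$ is valid on an honest neighbourhood (not just to first order), which amounts to checking that none of the \emph{other} defining inequalities of $D$ can fail arbitrarily close to an interior ridge point; this is a compactness/continuity argument using that those other inequalities are strict on the (relatively compact) interior of the ridge, combined with the explicit vertex coordinates to see that the strictness does not degenerate. Once that is in hand, the proof for the other three Giraud ridges $F(R_1,R_2^{-1})$, $F(P,R_2)$, $F(P^{-1},R_1^{-1})$ is word-for-word the same, replacing $J$ by the cycle transformations $R_2P^{-1}R_1$, $R_2P^{-1}R_1$ (for $F(P,R_2)$) and so on as listed, each of which again has $\ell=1$, $m=1$, so the relevant product of side-pairings is the identity and the local sector count closes up trivially.
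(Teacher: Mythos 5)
Your proposal is correct in outline but takes a genuinely different route from the paper. You run the standard local Poincar\'e sector argument: near an interior point of $F(J,J^{-1})$ the closure $\overline D$ is a ``quadrant'' cut out by the two bisector inequalities (note the signs should be $\im(e^{i\phi}z_1)\ge 0$, $\im(e^{-i\phi}w_1)\le 0$ by \eqref{polyhedron}), the three images are pairwise disjoint by the side-pairing proposition, consecutive images share a face, and the three closed sectors fill the transverse disc. The paper instead makes the covering automatic: using the third and fourth items of Lemma \ref{lemmapoly} it places $D$ in the region of points closer to $L_{*0}$ than to $J(L_{*0})$ and $J^{-1}(L_{*0})$, applies $J^{\pm1}$ to place $J^{\pm1}(D)$ in the two analogous regions, and observes that these three regions partition all of $\hc$, so it only remains to check that near the ridge $\overline D$ coincides with its region. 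What the paper's Voronoi-type decomposition buys is precisely the step you flag as delicate: there is no closing-up count to perform. Conversely, your local argument must justify the covering honestly, and your stated justification is too thin: the relation $J^3=\id$ by itself does not imply that the three sectors fill a neighbourhood (an order-three rotation applied to a thin sector gives three disjoint images covering almost nothing), and the sectors are not literal right angles, so no angle-sum bookkeeping applies. The covering should instead be deduced from the combination of (i) pairwise disjointness, (ii) the face-sharing identities $\overline D\cap J(\overline D)=S(J^{-1})$, $\overline D\cap J^{-1}(\overline D)=S(J)$ and $J(\overline D)\cap J^{-1}(\overline D)=J(S(J^{-1}))$ coming from condition (S.4), each containing a relatively open neighbourhood of the ridge in the relevant bisector, and (iii) connectedness of the transverse link circle, on which the union of the three closed arcs is then both open and closed. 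With that replacement (or by adopting the paper's global decomposition) your argument is complete; note finally that $\iota$ only exchanges $F(P,R_2)$ with $F(P^{-1},R_1^{-1})$ and preserves the other two Giraud ridges, so the remaining cases are handled, as in the paper, by rerunning the same argument with the images listed in the statement rather than by conjugation alone.
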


\begin{proof}
The proof consists in dividing the space into points that are closer to one of $L_{*0}$, $J(L_{*0})$ or $J^{-1}(L_{*0})$ and showing that $D$ and its images under $J$ are contained each in a different one of these domains and coincide with them around the ridge $F(J,J^{-1})$.

More formally, by Lemma \ref{lemmapoly} we know that $D$ is contained in the part of space closer to $L_{*0}$ than to its images under $J$ and $J^{-1}$. 
We can hence write 
\begin{equation}\label{DJ}
D \subset \{\textbf z \in \hc \colon \lvert \langle \textbf{z}, \textbf n_{*0} \rangle \rvert < \lvert \langle \textbf{z}, J(\textbf n_{*0}) \rangle \rvert, \quad \lvert \langle \textbf{z}, \textbf n_{*0} \rangle \rvert < \lvert \langle \textbf{z}, J^{-1}(\textbf n_{*0}) \rangle \rvert \}.
\end{equation}
For a point $\textbf z \in J ^{\pm 1} (D)$, we also have $J^{\mp 1}(\textbf z) \in D$. 
Applying the conditions in (\ref{DJ}) to $J^\mp(\textbf z)$, we get 
\[
\lvert \langle J^{\mp 1}(\textbf{z}), \textbf n_{*0} \rangle \rvert < \lvert \langle J^{\mp 1}(\textbf{z}), J(\textbf n_{*0}) \rangle \rvert, \quad \lvert \langle J^{\mp 1}(\textbf{z}), \textbf n_{*0} \rangle \rvert < \lvert \langle J^{\mp 1}(\textbf{z}), J^{-1}(\textbf n_{*0}) \rangle \rvert.
\]
By applying $J^{\pm 1}$ to all terms of (\ref{DJ}), we obtain 
\[
J^ {\pm 1} (D) \subset \{\textbf z \in \hc : \lvert \langle \textbf{z}, J^{\pm 1}( \textbf n_{*0}) \rangle \rvert < \lvert \langle \textbf{z}, \textbf n_{*0} \rangle \rvert, \quad \lvert \langle \textbf{z}, J^{\pm 1}(\textbf n_{*0}) \rangle \rvert < \lvert \langle \textbf{z}, J^{\mp 1}\textbf n_{*0} \rangle \rvert \}.
\]
Clearly, we used the fact that $J$ has order 3, so $J^2=J^{-1}$. 
It is obvious that $D,J(D)$ and $J^{-1}(D)$ are disjoint. 

The ridge we are considering is characterized by $\im (e^{i \phi}z_1)=\im (e^{-i \phi}w_1)=0$. 
We take a neighbourhood of the interior small enough, so that it does not meet the other sides of $D$. 
Then a point of $U$ is in $\overline D$ if and only if it is closer to $L_{*0}$ than to its images.
This is because if we consider the $z_1$ and $w_1$ coordinates small enough, $D$ actually coincides with the set described in (\ref{DJ}) and same for the images.
From this, it's easy to see that $D$, $J(D)$ and $J^{-1}(D)$ tessellate around $U$. 

The cycle transformation is 
\[
F(J,J^{-1}) \xrightarrow{J} F(J,J^{-1}).
\]

The other points of the proof are done in the same way, by taking the different images mentioned in the statement and using the same proof strategy.
\end{proof}

\paragraph*{Tessellation around ridges contained in Lagrangian planes.}

The second type are the ridges $F(P,R_1), F(P,R_1^{-1}), F(J,R_1)$ and $F(J,R_1^{-1})$, contained in Lagrangian planes. 
Again, the proofs are similar to the ones in \cite{livne}.

They contain either vertex $\textbf z_1$ or $\textbf z_2$ and they are defined by conditions only on the $\textbf z$-coordinates or on the $\textbf w$-coordinates.
It is enough to show that the polyhedron and its images under the side pairing maps tessellate around the ridges containing the vertex $\textbf z_1$. 
By applying $\iota$ we will have the same for ridges containing $\textbf z_2$. 

\begin{prop} \label{P,R1}
The polyhedron $D$ and its images under $R_1^{-1}, P^{-1}$ and $R_1^{-1}P^{-1}$ tessellate around the ridge $F(P,R_1)$.
\end{prop}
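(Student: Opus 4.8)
The plan is to mimic the Giraud-disc argument from Proposition \ref{JJ-}, but now the ridge $F(P,R_1)$ lies in a Lagrangian plane (a common meridian of the bisectors $B(P)$ and $B(R_1)$), and the relevant cycle transformation is the order-one map $R_1^{-1}P^{-1}R_2P$, so the tessellating copies of $D$ are $D, R_1^{-1}(D), P^{-1}(D)$ and $R_1^{-1}P^{-1}(D)$. First I would recall, from Lemma \ref{lemmapoly} and Remark \ref{polydef}, that near the ridge $F(P,R_1)$ — which is cut out by $\im(z_1)=\im(z_2)=0$ — the polyhedron $D$ is described simply by the sign conditions $\im(z_1)<0$ and $\im(z_2)>0$ (all other defining inequalities of $D$ are strict in a small enough neighbourhood $U$ of the interior of the ridge and hence do not matter locally). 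So the four quadrants around the real two-plane $\{\im(z_1)=\im(z_2)=0\}$ in the $(z_1,z_2)$-coordinates are the natural candidates for the four tiles.

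The key steps, in order: (i) fix a neighbourhood $U$ of a point in the interior of $F(P,R_1)$ small enough to avoid all sides of $D$ not containing the ridge, so that on $U$ membership in $\overline D$ is equivalent to $\{\im(z_1)\le 0,\ \im(z_2)\ge 0\}$. (ii) Compute the local coordinate behaviour of the three maps $R_1^{-1}, P^{-1}, R_1^{-1}P^{-1}$ near the ridge; here $R_1$ acts on $z_2$ essentially by the rotation $z_2\mapsto e^{-i\theta}z_2$ (so $R_1^{-1}$ sends $\{\im z_2>0\}$ to the opposite local half-space), and $P=R_1R_2$ acts on the $\textbf w$-coordinates, which on the ridge are linked to the $\textbf z$-coordinates; translate each of $R_1^{-1}(\overline D), P^{-1}(\overline D), R_1^{-1}P^{-1}(\overline D)$ into a sign condition on $\im(z_1),\im(z_2)$ exactly as in \ref{JJ-}. (iii) Check that the four resulting sign regions are pairwise disjoint (matching condition (F.3)'s disjointness of the $U_i^{-1}(D)$) and that their closures exhaust a neighbourhood of $U$ — i.e. the four "quadrants" around the Lagrangian plane tile it. (iv) Record the cycle of ridges $F(P,R_1)\to F(P,R_1^{-1})\to F(P^{-1},R_2)\to F(P^{-1},R_2^{-1})\to F(P,R_1)$ under the side pairing maps, consistent with the cycle-transformation table, closing the argument; then invoke $\iota$ (Lemma \ref{iotaaction}) to deduce the same for the three $\textbf z_2$-ridges $F(P^{-1},R_2)$ etc.

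The main obstacle I anticipate is step (ii): because one bisector ($B(P)$, i.e. $\im z_1=0$) is phrased in $\textbf z$-coordinates and the relevant images involve $P^{-1}$, which is naturally expressed via the $\textbf w$-coordinates of \eqref{w1}–\eqref{w2}, one must carefully verify that near the ridge the transition $\textbf z \leftrightarrow \textbf w$ is, to first order, affine-with-real-coefficients along the real directions, so that "$\im z_1<0$" is genuinely transported to a single local half-space under $P^{-1}$ and the combined map $R_1^{-1}P^{-1}$; this is the point where one actually needs to use that $F(P,R_1)$ is a meridian (Lagrangian), since an anti-holomorphic involution swapping the defining points fixes it, which forces exactly the kind of real-linear local behaviour that makes the four quadrants honest. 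Once that local linearisation is in hand, the disjointness and covering are immediate for the same reason as in Proposition \ref{JJ-}.

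\begin{proof}[Proof sketch]
The argument parallels that of Proposition \ref{JJ-}, with the Giraud disc replaced by a Lagrangian plane. Let $U$ be a neighbourhood of an interior point of the ridge $F(P,R_1)$, taken small enough that $U$ meets no side of $D$ other than $S(P)$ and $S(R_1)$. By Lemma \ref{lemmapoly} and Remark \ref{polydef}, on $U$ a point lies in $\overline D$ if and only if $\im(z_1)\le 0$ and $\im(z_2)\ge 0$, and lies in $D$ if and only if both inequalities are strict. Thus, locally, $\overline D$ is one of the four closed quadrants determined by the real two-plane $\{\im(z_1)=\im(z_2)=0\}$ in the $(z_1,z_2)$-chart. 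Using the matrices of $R_1$ and $P=R_1R_2$ together with the coordinate formulae \eqref{w1}, \eqref{w2}, \eqref{z1}, \eqref{z2}, one checks that near the ridge each of $R_1^{-1}$, $P^{-1}$ and $R_1^{-1}P^{-1}$ acts on this chart so as to carry $\overline D$ onto a different one of the four quadrants; this uses that the ridge is a meridian, so an antiholomorphic involution fixing it pointwise forces the local action on the transverse directions to be real-linear. Hence $D$, $R_1^{-1}(D)$, $P^{-1}(D)$ and $R_1^{-1}P^{-1}(D)$ are pairwise disjoint in $U$ and the closures of these four regions cover $U$, i.e. $D$ and its images tessellate a neighbourhood of the interior of $F(P,R_1)$. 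The cycle of ridges is
\[
F(P,R_1) \xrightarrow{R_1^{-1}} F(P^{-1},R_2^{-1}) \xrightarrow{P^{-1}} F(P^{-1},R_2) \xrightarrow{R_1^{-1}} F(P,R_1^{-1}) \xrightarrow{P^{-1}} F(P,R_1),
\]
with cycle transformation $R_1^{-1}P^{-1}R_2P$, in agreement with the table in Section \ref{cyclerel}.
\end{proof}
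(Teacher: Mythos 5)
Your proposal follows essentially the same route as the paper: both arguments place $D$, $R_1^{-1}(D)$, $P^{-1}(D)$ and $R_1^{-1}P^{-1}(D)$ into the four quadrants determined by the signs of $\im(z_1)$ and $\im(z_2)$, deduce disjointness from a sign table, and get the covering from the fact that the four quadrants exhaust a neighbourhood of the plane $\{\im(z_1)=\im(z_2)=0\}$ away from the other sides. Two remarks. First, the difficulty you flag in step (ii) is not actually there: since $\textbf{w}=P^{-1}(\textbf{z})$, the set $P^{-1}(D)$ is cut out \emph{exactly} by the inequalities $\arg(z_1)\in(0,\phi)$ and $\arg(z_2)\in(0,\theta)$ (the $\textbf{w}$-conditions defining $D$, read off in $\textbf{z}$-coordinates), so the transport of half-spaces is exact and global; no appeal to the antiholomorphic involution fixing the meridian, nor to any first-order linearisation, is needed — this is precisely how the paper justifies its sign table. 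Second, your recorded ridge cycle is wrong: the arrows must carry the side-pairing maps of the sides containing the successive ridges, namely
\[
F(P,R_1) \xrightarrow{P} F(P^{-1},R_2) \xrightarrow{R_2} F(P^{-1},R_2^{-1}) \xrightarrow{P^{-1}} F(P,R_1^{-1}) \xrightarrow{R_1^{-1}} F(P,R_1),
\]
whose composition is the cycle transformation $R_1^{-1}P^{-1}R_2P$. The map $R_1^{-1}$ is not the side-pairing map of any side containing $F(P,R_1)$, and the composition of the maps you wrote, $P^{-1}R_1^{-1}P^{-1}R_1^{-1}$, is not the stated cycle transformation. This slip does not affect the tessellation argument itself, but it should be corrected before the cycle relation is invoked.
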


\begin{proof}
Considering that $\textbf{w}= P^{-1}(\textbf{z})$ and that applying $R_1$ means to add $\theta$ to the argument of $z_2$, we can prove the signs in the following table. 

\begin{center}
\begin{tabular}{|c|c|c|c|c|}
\hline
Image of $D$ & $\im (z_1)$ & $\im (e^{i \phi} z_1)$ & $\im (z_2)$ &  $\im (e^{-i \theta} z_2)$ \\
\hline 
$D$ & - & + & + & - \\
$R_1^{-1}(D)$ & - & + & - & - \\
$P^{-1}(D)$ & + & + & + & - \\
$R_1^{-1}P^{-1}(D)$ & + & + & - & - \\
\hline
\end{tabular}
\end{center}

We can see from the table that each pair of images have some coordinates whose imaginary part has different sign. 
This clearly implies that they are disjoint.

Now, the ridge $F(P,R_1)$ is characterised by $\im (z_1)=\im (z_2)= 0$. 
Let us now consider a neighbourhood $U$ of the ridge and a point $\textbf z \in U$. 
If $\textbf z$ has argument of $z_1$ smaller than 0, then $D$ and $R_1^{-1}$ cover $U$, in the respective cases when the argument of $z_2$ and positive or negative. 
Similarly, when $\textbf z$ has first coordinate of argument bigger than 0, then $P^{-1}(D)$ and $R_1^{-1}P^{-1}(D)$ cover $U$, when $\arg (z_2)$ is positive or negative respectively. 

The corresponding cycle transformation is 
\[
F(P,R_1) \xrightarrow{P} F(P^{-1},R_2) \xrightarrow{R_2} F(P^{-1},R_2^{-1}) \xrightarrow{P^{-1}} F(P,R_1^{-1}) \xrightarrow{R_1^{-1}} F(P,R_1).
\]
\end{proof}

By applying $R_1, PR_1$ and $P=R_2^{-1}PR_1$ we get similar results for the other ridges in the cycle, namely $F(P,R_1^{-1})$, $F(P^{-1},R_2^{-1})$ and $F(P^{-1},R_2)$ respectively.

In a similar way, we can also prove
\begin{prop}
The polyhedron $D$ and its images under $R_1^{-1}, J^{-1}= A_1^{-1}P^{-1}$ and $R_1^{-1}A_1^{-1}P^{-1}$ tessellate around the ridge $F(J,R_1)$.
\end{prop}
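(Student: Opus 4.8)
The plan is to mirror, almost verbatim, the proof of Proposition \ref{P,R1} for the ridge $F(P,R_1)$, with the role of $P$ replaced by $J$. The two facts that make the coordinate bookkeeping explicit are: (i) since $\textbf{w}=P^{-1}(\textbf{z})$ and $J=PA_1$ with $A_1=\mathrm{diag}(e^{2i\phi},1,1)$, applying $J^{-1}=A_1^{-1}P^{-1}$ to a point of $D$ amounts to passing to its $\textbf{w}$-coordinates and then multiplying $w_1$ by $e^{-2i\phi}$; and (ii) applying $R_1^{\mp1}$ adds $\mp\theta$ to $\arg(z_2)$ and fixes $z_1$. Feeding into these the argument ranges $\arg(z_1)\in(-\phi,0)$, $\arg(z_2)\in(0,\theta)$, $\arg(w_1)\in(0,\phi)$, $\arg(w_2)\in(0,\theta)$ valid on $D$ by \eqref{polyhedron}, I would compute the signs of $\im(z_1)$, $\im(e^{i\phi}z_1)$, $\im(z_2)$ and $\im(e^{-i\theta}z_2)$ on $D$, $R_1^{-1}(D)$, $J^{-1}(D)$ and $R_1^{-1}J^{-1}(D)$, producing a sign table of the same shape as the one in Proposition \ref{P,R1}. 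The one genuinely non-routine entry is $J^{-1}(D)$: because $J^{-1}=A_1^{-1}P^{-1}$ is not diagonal in the $\textbf{z}$-coordinates, one must go through the $\textbf{w}$-coordinates of $D$, where $z_1$ becomes $e^{-2i\phi}w_1$ with $\arg\in(-2\phi,-\phi)$ and $z_2$ becomes $w_2$; everything else is immediate.

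From that table the ridge $F(J,R_1)$ is exactly the locus $\im(e^{i\phi}z_1)=\im(z_2)=0$, and the pairs $\bigl(\operatorname{sign}\im(e^{i\phi}z_1),\operatorname{sign}\im(z_2)\bigr)$ realised on $D$, $R_1^{-1}(D)$, $J^{-1}(D)$, $R_1^{-1}J^{-1}(D)$ are $(+,+)$, $(+,-)$, $(-,+)$, $(-,-)$ respectively; hence these four open polyhedra are pairwise disjoint. I would then pick a neighbourhood $U$ of the interior of $F(J,R_1)$ small enough to meet no other side of $D$ (possible by the finiteness in condition (S.5) and the combinatorics of the ridge), so that on $U$ membership in $\overline D$ or in one of the three images is governed solely by the signs of $\im(e^{i\phi}z_1)$ and $\im(z_2)$; the remaining two columns of the table, which stay $\im(z_1)<0$ and $\im(e^{-i\theta}z_2)<0$ for all four images, confirm that the bisectors $B(P)$ and $B(R_1^{-1})$ are not crossed inside $U$ and that all the other defining inequalities of $D$ remain strict there. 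Consequently the four closed polyhedra cover $U$, which is precisely the asserted tessellation.

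Finally I would record the cycle transformation, reading off the ridges of the cycle as in Section \ref{cyclerel}:
\[
F(J,R_1)\xrightarrow{\ J\ }F(J^{-1},R_2)\xrightarrow{\ R_2\ }F(J^{-1},R_2^{-1})\xrightarrow{\ J^{-1}\ }F(J,R_1^{-1})\xrightarrow{\ R_1^{-1}\ }F(J,R_1),
\]
whose composite is the map $R_1^{-1}J^{-1}R_2J$ listed in Section \ref{cyclerel}; here one uses that $J$ sends $B(J)$ to $B(J^{-1})$ and conjugates $R_1$ to $R_2$ via the relation $R_2=JR_1J^{-1}$. As in Proposition \ref{P,R1}, applying $R_1$, $JR_1$ and $J=R_2^{-1}JR_1$ then transports the same argument to the remaining ridges $F(J,R_1^{-1})$, $F(J^{-1},R_2^{-1})$ and $F(J^{-1},R_2)$ of this cycle, and the symmetry $\iota$ of Lemma \ref{iotaaction}, together with Remark \ref{iotaconjug}, carries everything over to the corresponding ridges containing $\textbf{z}_2$. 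The main obstacle is thus isolated to the single verification of the $J^{-1}(D)$ row of the sign table; once that is in hand, disjointness, the choice of $U$ and the cycle diagram are bookkeeping identical in form to the already-treated case $F(P,R_1)$.
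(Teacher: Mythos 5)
Your proposal is correct and follows essentially the same route as the paper: the paper proves this proposition by the same sign-table argument it spells out for $F(P,R_1)$ (it only states that the $J$-case is "similar"), and your table of signs of $\im(e^{i\phi}z_1)$ and $\im(z_2)$ on $D$, $R_1^{-1}(D)$, $J^{-1}(D)$ and $R_1^{-1}J^{-1}(D)$, including the one non-diagonal step of passing through the $\textbf{w}$-coordinates to handle $J^{-1}=A_1^{-1}P^{-1}$, is exactly the intended computation. The cycle transformation and the transport to the other ridges of the cycle (and via $\iota$ to the ridges containing $\textbf{z}_2$) also match the paper.
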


Again, by applying the maps in the cycle transformation, which is 
\[
F(J,R_1) \xrightarrow{J} F(J^{-1},R_2) \xrightarrow{R_2} F(J^{-1},R_2^{-1}) \xrightarrow{J^{-1}} F(J,R_1^{-1}) \xrightarrow{R_1^{-1}} F(J,R_1),
\]
we can get that the tessellation property (F.3) holds also for $F(J^{-1},R_2),F(J^{-1},R_2^{-1})$ and $F(J,R_1^{-1})$.

\paragraph*{Tessellation around ridges contained in complex lines.}

In this section we will show that the images of $D$ tessellate around the ridges contained in complex lines. 
We will divide them in two parts for which we will use slightly different methods. 

We will start with the ridges contained in complex lines and defined by conditions either on the $\textbf{z}$-coordinates or on the $\textbf{w}$-coordinates.
These are ridges $F(P,J)$, $F(R_1,R_1^{-1})$, $F(P^{-1},J^{-1})$ and $F(R_2,R_2^{-1})$. 
From the first two, the others follow by applying $\iota$.
We will again omit the proofs, as they are equivalent to the ones in \cite{livne}.
These proofs strongly rely on the fact that $p$ and $k$ are integers. 
In some of the cases that we are considering, though, $k$ is of the form $p/2$, with $p$ odd. 
The proof can be adapted, as we will explain in Section \ref{rationalk}.

\begin{prop}\label{P,J}
The polyhedron $D$ and its images under $P^{-1},A_1$ and $A_1P^{-1}$ tessellate around the ridge $F(P,J)$.
Moreover, the polyhedron $D$ and its images under $R_1$ tessellate around the ridge $F(R_1,R_1^{-1})$.
\end{prop}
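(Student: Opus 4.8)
The plan is to reduce both statements to the one–dimensional picture transverse to the complex line carrying the ridge. Recall from the ridge tables that $F(P,J)$ is the triangle with vertices $\textbf z_1,\textbf z_9,\textbf z_{12}$ lying in the complex line $L_{01}=\{z_1=0\}$, which is a common slice of $B(P)$ and $B(J)$, and that $F(R_1,R_1^{-1})$ is the triangle $\textbf z_1,\textbf z_3,\textbf z_6$ in $L_{23}=\{z_2=0\}$, a common slice of $B(R_1)$ and $B(R_1^{-1})$. Near an interior point of such a ridge the only two conditions of \eqref{polyhedron} that are not automatically strict are the two defining the bisectors through it, so I would first fix a neighbourhood $U$ of the relative interior of the ridge, small enough to miss every other side of $D$. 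On $U$ the polyhedron $D$ then coincides with $\{\,\textbf z\in U:\arg(z_1)\in(-\phi,0)\,\}$ in the first case and with $\{\,\textbf z\in U:\arg(z_2)\in(0,\theta)\,\}$ in the second; that is, $D\cap U$ is a wedge about the complex line, of angular width $\phi=\pi/k$ and $\theta=2\pi/p$ respectively, with walls the two bisectors in question.

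For $F(R_1,R_1^{-1})$ this already settles the claim. The map $R_1$ fixes $L_{23}$ pointwise and multiplies the transverse coordinate $z_2$ by $e^{i\theta}$, so, because $p$ is an integer, $R_1^p=\id$ and the $p$ images $R_1^j(D)$, $j=0,\dots,p-1$, meet $U$ in the $p$ consecutive wedges $\arg(z_2)\in(j\theta,(j+1)\theta)$. These have pairwise disjoint interiors (disjoint $\arg z_2$), they each contain the ridge in their closure, and their closures cover $U$ because $j\theta$, $j=0,\dots,p$, run once around the circle; this is exactly condition (F.3) for the cycle transformation $R_1$ recorded in the table.

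For $F(P,J)$ the relevant cycle transformation is $P^{-1}J=A_1$; since $A_1$ multiplies $z_1$ by $e^{2i\phi}$ and fixes $z_2$, and $2\phi=2\pi/k$ with $k$ an integer, we have $A_1^k=\id$. The images $A_1^j(D)$, $j=0,\dots,k-1$, meet $U$ in the wedges $\arg(z_1)\in\big((2j-1)\phi,\,2j\phi\big)$, which are disjoint but leave the gaps $\arg(z_1)\in(2j\phi,(2j+1)\phi)$. The crux of the argument is to locate $P^{-1}(D)$: $D$ and $P^{-1}(D)$ are glued along the side $S(P)=\overline D\cap B(P)$, which contains $F(P,J)$, so $P^{-1}(D)$ lies on the $\im(z_1)>0$ side of $B(P)$ near the ridge, and its second wall along the ridge is $P^{-1}(S(J^{-1}))$, since $P(F(P,J))=F(P^{-1},J^{-1})\subset S(P^{-1})\cap S(J^{-1})$. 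A direct computation with the formulae \eqref{w1}--\eqref{z2} together with Lemma \ref{lemmapoly} shows that this second wall lies in the bisector $\arg(z_1)=\phi$, so $P^{-1}(D)\cap U$ is precisely the missing wedge $\arg(z_1)\in(0,\phi)$. Applying $A_1^j$ then gives $A_1^jP^{-1}(D)\cap U=\{\arg(z_1)\in(2j\phi,(2j+1)\phi)\}$, and the $2k$ polyhedra $A_1^j(D)$ and $A_1^jP^{-1}(D)$, $j=0,\dots,k-1$, have pairwise disjoint interiors and closures covering $U$, their $\arg(z_1)$–ranges partitioning a full turn of total width $2k\phi=2\pi$. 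This is condition (F.3) for the cycle transformation $P^{-1}J$ with $\ell=1$ and $m=k$, as in the table, the cycle itself being $F(P,J)\xrightarrow{J}F(P^{-1},J^{-1})\xrightarrow{P^{-1}}F(P,J)$.

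The main obstacle is the computation pinning down $P^{-1}(D)\cap U$ as exactly the adjacent wedge $\arg(z_1)\in(0,\phi)$: unlike $A_1$, the map $P^{-1}$ does not preserve $L_{01}$, so the transverse picture is not carried over by a rotation and one genuinely has to use the explicit coordinate formulae and Lemma \ref{lemmapoly}, exactly as in the corresponding argument of \cite{livne}. I should also point out that the closing-up identities $A_1^k=\id$ and $R_1^p=\id$ require $k$, respectively $p$, to be an integer; when $k=p/2$ with $p$ odd this last step must be modified, which is the content of Section \ref{rationalk}.
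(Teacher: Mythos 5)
Your argument is correct and follows exactly the strategy the paper intends (and attributes to \cite{livne}): near each ridge the polyhedron and its relevant images are wedges in the argument of the transverse coordinate ($z_2$ for $F(R_1,R_1^{-1})$, $z_1$ for $F(P,J)$), the cycle map ($R_1$, resp.\ $A_1=P^{-1}J$) rotates that coordinate by $\theta$, resp.\ $2\phi$, and integrality of $p$, resp.\ $k$, closes the picture up; your identification of $P^{-1}(D)$ with the wedge $\arg(z_1)\in(0,\phi)$ is the same gluing-along-$S(P)$ step the paper sketches in Section \ref{rationalk}. The only remark worth making is that the ``second wall'' computation is essentially immediate, since $w_1(P\mathbf{y})=z_1(\mathbf{y})$ identifies $P^{-1}(B(J^{-1}))$ with $\{\arg(z_1)=\phi\}$ directly.
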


By applying $\iota$ we have equivalent results around $F(P^{-1},J^{-1})$ and $F(R_2,R_2^{-1})$. 

Moreover, in exactly the same way as in \cite{livne} we can prove that $D$ and appropriate images tessellate around $F(P,P^{-1})$. 
%
The proof is done by showing that in some coordinates $P^3$ rotates $\textbf n_{*0}$ by $e^{i\psi}$, with $\psi=\frac{2\pi}{d}$ and $d=\frac{2p}{p-6}$, as in Table \ref{tablelist}.
At the same time, $P^3$ fixes the ridge itself. 
Then the polyhedron and its images under $P$ and $P^{-1}$ will be contained in different sectors for the arguments of at least one of the new coordinates and they will cover a sector of length $\psi$.
Applying $P^3$ it will cover a whole neighbourhood of the ridge by rationality of $\psi$, since $d$ is always an integer.

The corresponding cycle transformation is 
\[
F(P,P^{-1}) \xrightarrow{P} F(P,P^{-1}).
\]

Finally, we have the last set of ridges.

\begin{prop}
The polyhedron $D$ and its images under $J$, $JR_2$, $R_1R_2J$ and their compositions tessellate around the ridge $F(R_1,J^{-1})$.
\end{prop}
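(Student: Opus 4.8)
The plan is to establish the tessellation part of property (F.3) for $F(R_1,J^{-1})$ — everything else about this ridge, namely (F.1) and (F.2) together with the cycle relation $(R_2R_1J)^{l}=I$, is already in Section \ref{cyclerel} — by copying, essentially verbatim, the argument used for the complex-line ridges $F(P,J)$ and $F(P,P^{-1})$, the one new feature being that the cycle transformation fixes the complex line carrying the ridge. First I would fix the geometry: from the coordinate tables the vertices $\textbf{z}_9,\textbf{z}_{10},\textbf{z}_{11}$ all satisfy $z_2=\frac{\sin\phi}{\sin(\theta+\phi)}$ and $w_1=e^{i\phi}\frac{\sin\theta}{\sin(\theta+\phi)}$, which are exactly the two equations of $L_{*3}$, so $F(R_1,J^{-1})$ is a triangle lying in $L_{*3}$, cut out there by the traces of $B(R_1)$ and $B(J^{-1})$, and whose polar vector is $\textbf{n}_{*3}$. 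By Section \ref{cyclerel} the cycle transformation is $T=R_2R_1J$, it fixes the ridge pointwise, and $T^{l}=\id$ with $l$ as in Table \ref{tablelist}. It is worth noting that $2\pi/l=\pi-\theta-2\phi$ is precisely the cone angle of the singularity produced by coalescing $v_*$ with $v_3$, which is the geometric reason the ridge stabiliser is cyclic of order $l$; and that if $l=\infty$ one of $\textbf{z}_9,\textbf{z}_{10},\textbf{z}_{11}$ lies on $\partial\hc$, the ridge degenerates, and one is in the situation treated in Section \ref{degenerate}.

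The structural point is that $T$ is holomorphic and fixes the real $2$-plane $F(R_1,J^{-1})$ pointwise, and that plane spans $L_{*3}$ over $\C$; hence $T$ fixes all of $L_{*3}$ pointwise and is therefore the complex reflection with mirror $L_{*3}$. Consequently $\langle T\rangle$ is a cyclic group of order $l$ acting on the transverse complex disc through any interior point of the ridge as rotations about the centre, so the $\langle T\rangle$-orbit of any sector of angular width $2\pi/l$ in that disc tiles the punctured disc.

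Finally the covering argument, structurally the same as for $F(P,P^{-1})$. Take a neighbourhood $U$ of the interior of $F(R_1,J^{-1})$ small enough to avoid every other side of $D$, and use the transverse coordinate $\zeta=z_2-\frac{\sin\phi}{\sin(\theta+\phi)}$, which vanishes exactly on $L_{*3}$. At every interior point of the ridge the conditions $\arg z_1\in(-\phi,0)$ and $\arg w_2\in(0,\theta)$ hold strictly, hence on all of $U$; so on $U$ membership in $\overline D$ reduces to $\im(z_2)\geq 0$ and $\im(e^{-i\phi}w_1)\leq 0$, each a half-plane condition on $\zeta$, and $\overline D\cap U$ is the product of the ridge with an angular sector in $\arg\zeta$ whose width is read off from these two bisector conditions localised at $L_{*3}$. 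Then, exactly as in Propositions \ref{P,R1} and \ref{P,J}, by tracking how $J$, $R_1$ and $R_2$ move the arguments of $z_2$ and $w_1$ one checks that the images of $D$ prescribed by (F.3) — $D$ together with its images under $J$, $JR_2$, $R_1R_2J$ and their compositions with powers of $T$ — occupy consecutive sectors of $\arg\zeta$ with pairwise disjoint interiors (disjointness coming, as always, from two imaginary parts of opposite sign on two distinct images), and that one full block $D\cup J(D)\cup JR_2(D)$ sweeps out a sector of width exactly $2\pi/l$. By the previous paragraph its $\langle T\rangle$-translates then tile $U$, i.e. the $3l$ closures $U_i^{-1}(\overline D)$, $i=0,\dots,3l-1$, cover a neighbourhood of the interior of the ridge with disjoint interiors; the ridge cycle is $F(R_1,J^{-1})\to F(J,R_2^{-1})\to F(R_1^{-1},R_2)\to F(R_1,J^{-1})$. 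The main obstacle is precisely this last step of bookkeeping: confirming that the sectors of $\arg\zeta$ cut out by $D,J(D),JR_2(D),\dots$ are genuinely adjacent and close up with no gap and no overlap, which needs the same careful sign analysis of the imaginary parts of $z_2$, $w_1$ (and of their $e^{\pm i\theta},e^{\pm i\phi}$ multiples) under the generators as in Lemma \ref{lemmapoly} and the earlier tessellation propositions; a minor additional point is that when $k=p/2$ with $p$ odd the argument is run as indicated in Section \ref{rationalk}.
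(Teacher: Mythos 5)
Your architecture is exactly the paper's: you place $F(R_1,J^{-1})$ inside $L_{*3}$, recognise the cycle transformation as a complex reflection of order $l$ with mirror $L_{*3}$, let the block $D\cup J(D)\cup JR_2(D)$ play the role of a transverse sector of angular width $\psi=2\pi/l$, and rotate that block around by the cyclic group generated by the cycle map. The gap is in the step you set up and then defer as bookkeeping. Your transverse coordinate $\zeta=z_2-\frac{\sin\phi}{\sin(\theta+\phi)}$ does not linearise the problem: the condition $\im(z_2)\geq 0$ is indeed the half-plane $\im\zeta\geq 0$, but $\im(e^{-i\phi}w_1)\leq 0$ is a condition on $w_1$, which is a M\"obius-type function of $(z_1,z_2)$, so $B(J^{-1})$ meets the transverse disc through a ridge point in a curve rather than a ray of constant $\arg\zeta$, and the same is true of $JR_2(B(R_1^{-1}))$, which is supposed to be the far wall of your $\psi$-sector. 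Hence ``consecutive sectors of $\arg\zeta$ closing up with no gap and no overlap'' is only meaningful to first order in your coordinate, and the verification you postpone cannot be run as stated.

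What is missing is precisely the content of the paper's proof: the coordinate $\xi_1=\frac{\sin\phi-\sin(\theta+\phi)z_2}{1-z_2}$, obtained by decomposing $\textbf z$ in a basis consisting of the polar direction $\textbf n_{*3}$ (up to sign) and two vectors spanning $L_{*3}$. In that coordinate the needed statements are exact rather than infinitesimal: $B(R_1)$ lies in $\{\xi_1\in\R\}$; an explicit computation of $JR_2\textbf z$ shows $\arg\xi_1=-\psi$ on $JR_2(B(R_1^{-1}))$, which is what pins the block's width at exactly $2\pi/l$; and the cycle map acts by $(\xi_1,\xi_2)\mapsto(e^{-i\psi}\xi_1,\xi_2)$. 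Your parenthetical that disjointness comes ``as always, from two imaginary parts of opposite sign'' also misses the one delicate pair: the earlier propositions give $D\cap J(D)=\emptyset$ and $D\cap R_2(D)=\emptyset$ (hence $J(D)\cap JR_2(D)=\emptyset$), but $D\cap JR_2(D)=\emptyset$ requires a separate argument, which the paper carries out by showing the two images lie on opposite sides of the ray $\arg\xi_1=-\psi/2$, rewriting $\xi_1$ in $\textbf w$-coordinates and invoking Lemma \ref{lemmaone} together with the bounds on $\lvert w_1\rvert,\lvert w_2\rvert$ coming from the structure of the sides. Two small points: the cycle order is $F(R_1,J^{-1})\xrightarrow{R_1}F(R_1^{-1},R_2)\xrightarrow{R_2}F(J,R_2^{-1})\xrightarrow{J}F(R_1,J^{-1})$, the reverse of what you wrote; and your observation that $2\pi/l$ equals the cone angle at the coalesced point $v_*\equiv v_3$ is correct and is indeed the geometric reason the stabiliser is cyclic of that order.
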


\begin{proof}
The proof works similarly to those for ridges $F(P,J)$ and $F(P,P^{-1})$.
We can in fact change coordinates as in the latter case, so to have an analogous situation to the one in the former. 
In this case though, we will define $\psi=\frac{2\pi}{l}$, for $l$ defined in \eqref{ldef}.

First of all, we recall that $F(J^{-1},R_1)$ is contained in $L_{*3}$. 
Furthermore, the map $JR_2R_1$ rotates the normal vector $\textbf n_{*3}$ by $-\psi$ and it fixes pointwise the ridge.
We then change basis to new coordinates, so that the first coordinate is along the normal vector to the complex line (up to a minus sign, which will be useful in the calculations) and the other two are along two vectors spanning the complex line once we pass to projective coordinates. 

The vector in the new basis will hence be 
\[
\begin{pmatrix}
z_1 \\ z_2 \\ 1
\end{pmatrix}
=\frac{\sin \phi-\sin (\theta+\phi)z_2}{\sin(\theta+\phi)-\sin \phi}
\begin{pmatrix}
0 \\ -1 \\ -1
\end{pmatrix}
+z_1 \begin{pmatrix}
1 \\ 0 \\ 0
\end{pmatrix}
+ \frac{1-z_2}{\sin(\theta+\phi)-\sin \phi}
\begin{pmatrix}
0 \\ \sin \phi \\ \sin (\theta+\phi)
\end{pmatrix}.
\]

We define then the $\xi$-coordinates to be
\begin{align}\label{xiD}
\xi_1&=\frac{\sin \phi-\sin (\theta+\phi)z_2}{1-z_2},  \nonumber\\
\xi_2&= \frac{z_1(\sin(\theta+\phi)-\sin \phi)}{1-z_2}.
\end{align}

\begin{figure}
\centering
\includegraphics[width=1\textwidth]{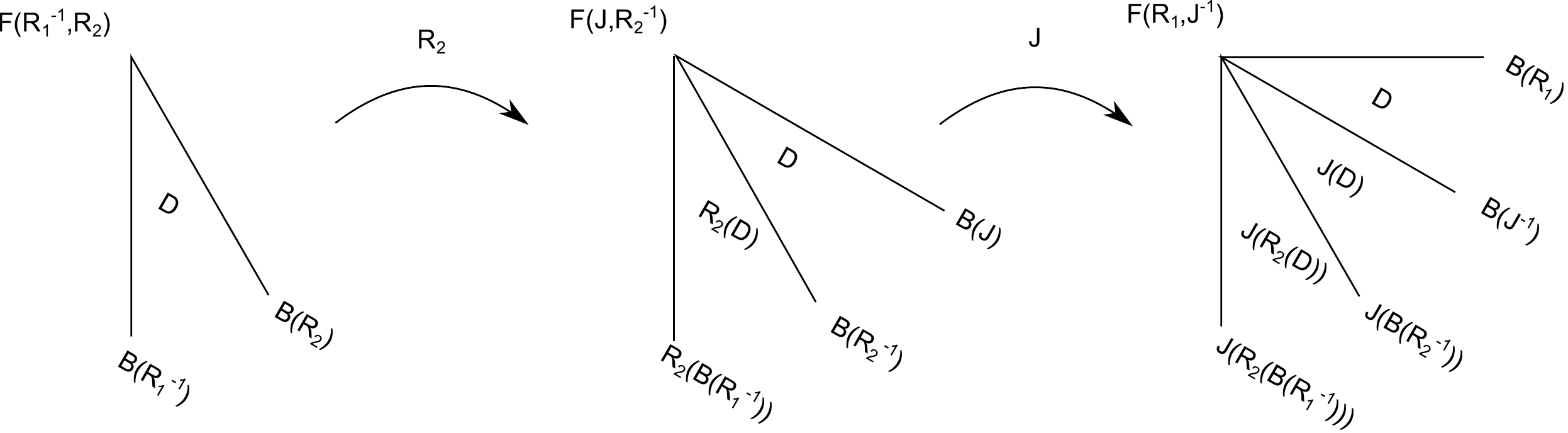}
\begin{quote}\caption{The tessellation around $F(R_1,J^{-1})$. \label{tessR1J-}} \end{quote}
\end{figure}

Let us now look at Figure \ref{tessR1J-}. 
By definition the ridge $F(R_1,J^{-1})$ is contained in the intersection of $B(R_1)$ and $B(J^{-1})$. 
It is clear that on $B(R_1)$, since $z_2$ is real, also $\xi_1$ will be real. 

If we take the ridge $F(R_1^{-1},R_2)$, we know that the polyhedron $D$ is as in the first image of Figure \ref{tessR1J-}.
By definition of the bisectors, $R_2(B(R_2))=B(R_2^{-1})$.
Also, $R_2$ sends $F(R_1^{-1},R_2)$ to $F(J,R_2^{-1})$ (see cycle relation below). 
Then we can apply the map to the first image and get the second configuration, since $F(J,R_2^{-1})$ is in $B(J)$ and $B(R_2^{-1})$ by definition but also in $R_2(B(R_1^{-1}))$ by construction. 
We can do the same thing applying $J$ and we get the third configuration in the figure. 

We now want to prove that in the argument of the coordinate $\xi_1$, $D$, $J(D)$ and $JR_2(D)$ make a sector of length $\psi$. 
Once we prove this, we can apply an argument as in \ref{P,J} and apply $R_1$.
But this gives us the map $R_1R_2J$ which acts on the $\xi$ coordinates $(\xi_1,\xi_2)$ by sending to $(e^{-i \psi} \xi_1,\xi_2)$, and hence it carries the configuration all around the ridge and tessellates the space because of rationality of $\psi$, which comes from the fact that $l$ is always an integer.

To prove that the length of the sector is $\psi$, we will prove that the argument of the $\xi_1$ coordinate of a point on $JR_2(B(R_1^{-1}))$ is $-\psi$. 
This is just a calculation, as it turns out that 
\begin{align*}
JR_2 \textbf z 
&= JR_2 \begin{pmatrix}
z_1 \\ z_2 \\1
\end{pmatrix}
=J \begin{pmatrix}
-\sin \theta e^{-i \phi}z_1+(\sin \phi+\sin (\theta-\phi))(1-z_2) \\
\sin \phi (1-z_1-e^{-i \theta}z_2) \\
-\sin(\theta+\phi)(z_1+z_2)+\sin \phi+\sin \theta e^{i \phi}
\end{pmatrix} \\
&= \begin{pmatrix}
2z_1 \sin^2 \phi (1-\cos \theta) \\
2z_2 \sin^2 \phi e^{i \phi} (\cos (\theta+\phi)-\cos \phi)
+\sin^2 \phi (1-e^{i \theta})(e^{2i \phi}-1) \\
z_2 (1-e^{-i \theta}) \sin \phi \sin(\theta+\phi) (e^{2i \phi}-1)
+ \sin^2 \phi (1-e^{-i \theta})(1-e^{i(2\phi+\theta)})
\end{pmatrix}.
\end{align*}

Then we can calculate its $\xi_1$ coordinate and we have 
\begin{align}\label{xiJR2D}
\xi_1 &=\frac{-e^{i(\theta+2\phi)}\sin^2 \phi(2(1-\cos \theta)(\sin \phi-e^{-i \theta} z_2 \sin(\theta+\phi)))}
{-2 \sin^2 \phi(1-\cos\theta)e^{-i \theta}z_2+2 \sin^2 \phi(1-\cos \theta)}= \nonumber \\
&=e^{-i\psi} \frac{\sin \phi-\sin(\theta+\phi) e^{-i \theta} z_2}{1-e^{-i \theta}z_2}
\end{align}

If a point $\textbf{z}$ is in $B(R_1^{-1})$, then its $z_2$ coordinate is $z_2=e^{i \theta} u$ and hence the previous expression is 
\[
\xi_1=e^{-i \psi} \frac{\sin \phi -\sin(\theta+\phi)u}{1-u}.
\]
Clearly, the argument of the new coordinate is $-\psi$. 

The last thing we need to show is that the three images are disjoint. 
We already saw that $D$ is disjoint from $J(D)$ and $R_2(D)$ in \ref{JJ-} and in the equivalent statement of \ref{P,J} for $R_2$, respectively. 
But then also $J(D)$ and $JR_2(D)$ are disjoint because $J$ is an isometry.
To prove the disjointness of $D$ and $JR_2(D)$, we look at the expression for the $\xi_1$ coordinate of a point in $D$, as in \eqref{xiD}, and of a point in $JR_2(D)$, as in \eqref{xiJR2D}. 

To show disjointness, we will show that $D$ and $JR_2(D)$ are contained in the sector where the argument of $\xi_1$ is respectively bigger and smaller than $-\frac{\psi}{2}$.
To do that we just need to show that $B(J^{-1})$ and $J(B(R_2^{-1}))=JR_2(B(R_2))$ are as said. 

Since both these bisectors are defined by equations on the $\textbf{w}$-coordinates, it is useful to rewrite the two equations in terms of these, using Formulae \eqref{z1} and \eqref{z2}.
They will be as following. 
If $\textbf z \in D$, then 
\[
\xi_1=2\sin \frac{\theta}{2}\sin \phi e^{-i \frac{\psi}{2}}
\frac{\sin \theta -\sin(\theta+\phi)e^{-i \phi}w_1}
{-\sin \theta e^{-i \phi}w_1+(\sin \phi -\sin(\theta+\phi))w_2+\sin \phi+\sin(\theta-\phi)},
\]
with $w_1$ and $w_2$ coordinates of $\textbf z$.
We will consider points in $B(J^{-1})$, so $w_1=e^{i \phi}u$, with $u$ real and we want to show that $\im (e^{-i \frac{\psi}{2}}\xi_1)>0$.

Taking the imaginary part of the expression above, this means requiring that 
\[
(\sin \theta -\sin(\theta+\phi)u)(\sin(\theta+\phi)-\sin \phi)\im(w_2)>0.
\]
The third term is positive for points in $D$, while the second one is positive as long as $l$ is positive, which is the case where the ridge we are tessellating around does not collapse. 
The last thing we need is then to prove that in $B(J^{-1})$ the modulus of $w_2$ remains smaller than $\frac{\sin \theta}{\sin(\theta+\phi)}$. 
But looking at the structure of the side, as in Figure \ref{sidepairing}, we can see that the side is bounded by the complex lines $L_{03}$ and $L_{*3}$, so the modulus of $w_2$ is between 0 and $\frac{\sin \theta}{\sin(\theta+\phi)}$.

On the other hand, if $\textbf{z}$ is in $JR_2(B(R_2))$, its coordinate will be 
\[
\xi_1=2\sin \frac{\theta}{2}\sin \phi e^{-i \frac{\psi}{2}}
\frac{\sin \phi- \sin(\theta+\phi)w_2}
{(\sin \phi-\sin(\theta+\phi))e^{-i \phi}w_1 -\sin \theta w_2+\sin \theta},
\]
with $w_1$ and $w_2$ coordinates of a point in $D$. 
As they vary through the possible values, $\textbf{z}$ varies in $JR_2(B(D))$. 
Here we consider points in $JR_2(B(R_2))$, so where $w_2=x$, with $x$ real and we want to show this time that $\im (e^{-i \frac{\psi}{2}}\xi_1)<0$. 

We now take the imaginary part of the expression for $\xi_1$ and we obtain that such a condition is equivalent to requiring that 
\[
(\sin \phi -\sin(\theta+\phi)x)(\sin(\theta+\phi)-\sin \phi)\im(e^{-i \phi}w_1)<0.
\]
As before, this reduces to show that the first term is positive and this is true because of the structure of $B(R_2)$, which is contained between $L_{12}$ and $L_{*2}$.
This concludes the proof.

The corresponding cycle transformation is
\[
F(R_1,J^{-1}) \xrightarrow{R_1} F(R_1^{-1},R_2) \xrightarrow{R_2} F(J, R_2^{-1}) \xrightarrow{J} F(R_1,J^{-1}).
\]
\end{proof}

By applying the isometries that compose the cycle transformation, we obtain the tessellation around the last ridges, $F(R_1^{-1},R_2)$, $F(J, R_2^{-1})$ and $F(R_1,J^{-1})$.

\subsection{Polyhedra with extra symmetry}\label{rationalk}

In this section we will describe the particular case when $l$ or $k$ are equal $\frac{p}{2}$.
Considering $k$ or $l$ is equivalent, since swapping them corresponds to swapping $\mu_1$ and $\mu_5$ in the ball quintuple, which geometrically corresponds to choosing whether to have $v_*$ or $v_0$ in the origin of the coordinates and will hence give us the same construction.
In this case the polyhedron has an extra symmetry, because by definition the condition implies that $\phi=\theta$. 
The pairs $(p,k)$ in our list and satisfying this condition, are $(5,5/2)$, $(6,3)$, $(7,7/2)$, $(8,4)$, $(9,9/2)$, $(10,5)$, $(12,4)$ and $(18,3)$.
By Theorem 6.2 in \cite{sauter}, the lattice $(p,\frac{p}{2})$ is isomorphic to the one of the form $(p,2)$.

This includes the cases when $k$ is not an integer, which have not been treated previously because previous proofs for tessellation rely on the fact that $k$ was always an integer. 
When tessellating a neighbourhood of $F(P,J)$, in fact, $D$ and $P^{-1}(D)$ are contained in sectors where the argument of $z_1$ is between 0 and $\phi$ and between $\phi$ and $2\phi$ respectively. 
Then, one can apply $A_1$ to the polyhedra and translate of $2\phi$ the sector.
In order to cover exactly all the possible values of the argument of $z_1$ one then needs $k$ to be an integer. 

To avoid this problem, one can use a slightly different version of the same theorem, namely Poincaré polyhedron theorem for coset decompositions.
The statement is very similar to the one we gave and can be found in \cite{mostow} and in \cite{nonarithm}. 
The basic difference is the presence of a finite group $\Upsilon < \is \hc$ preserving the polyhedron and compatible with the side pairing maps. 

Then one just needs tessellation around one facet in each orbit of the action of $\Upsilon$ and the cosets of the polyhedron will tessellate the space. 
This also gives a different presentation for the group generated by $\Upsilon$ and the side pairings, with the additional relations given by a presentation of $\Upsilon$ and by the compatibility relations. 
Here the group $\Upsilon$ will be a finite cyclic group. 

The reason why this approach is reasonable lies in the fact that when $k=\frac{p}{2}$, by definition, $\phi=\theta$ and hence the configuration space has an extra symmetry. 
The main difference is that we do not need then a butterfly move $A_1$, because we can introduce a move that swaps points $v_0$ and $v_1$ (which now have same cone angle).
The new move, squared, is the same as $A_1$ we used so far.
This solves the problem because the new move acts on the $z_1$ coordinate by rotating by $\phi$ instead of $2\phi$ as before, so we just need $2k$ to be an integer. 

From now on, we will assume we are in the case where $k=\frac{p}{2}$, hence $\phi=\theta=\frac{2\pi}{p}$.
Clearly, the calculations to find vertices, area and moves could be simplified by adding the relation $\phi=\theta$ in the equations, but for simplicity we will leave them as they are. 
We will have the moves $R_1$ and $R_2$ defined as before, but we will also have an extra move corresponding to swapping the vertices $v_0$ and $v_1$, as we already mentioned. 
This new move, that we will call $S_1$, can be found by requiring that the images under $S_1$ of the $v_i$'s, which we denote by $v_i'$, satisfy the equations $v_3'=v_3, v_2'=v_2, v_1'=v_0$ and $v_0'=v_{-1}$. 
The move is illustrated in Figure \ref{octagonS1}. 

\begin{figure}[t]
\centering
\includegraphics[width=1\textwidth]{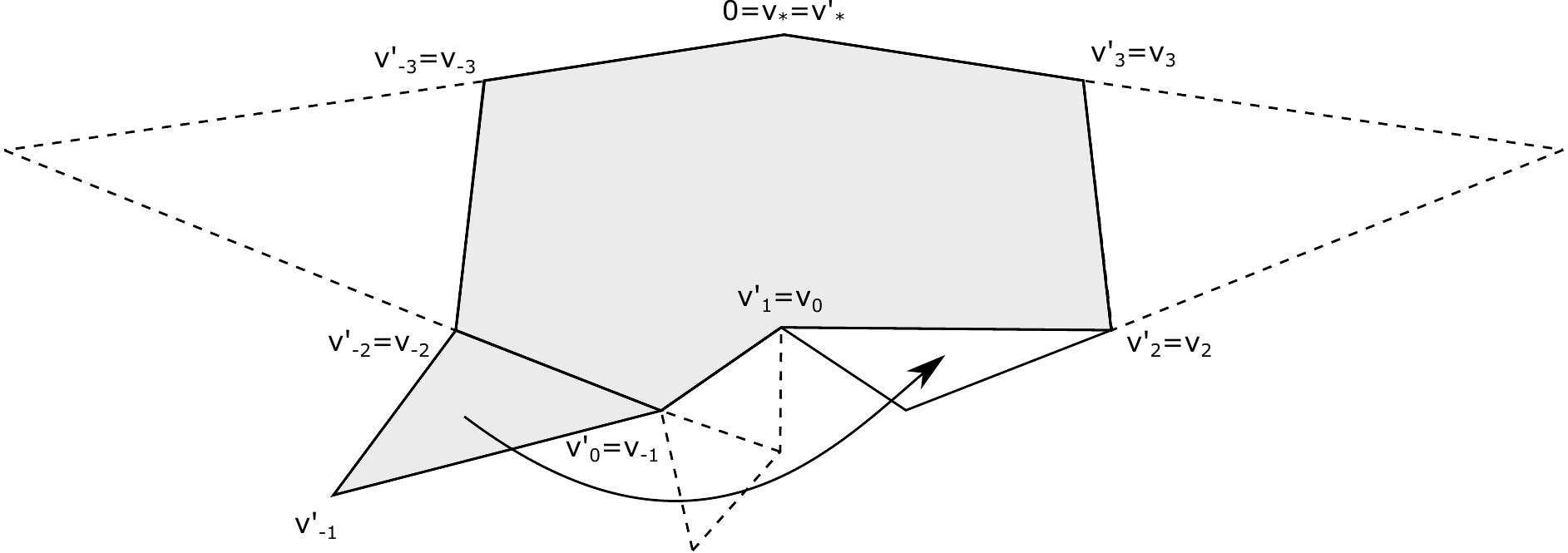}
\begin{quote}\caption{The move $S_1$. \label{octagonS1}} \end{quote}
\end{figure}

Solving the equations or looking at the geometric meaning of the move, one can deduce the matrix of $S_1$. 
The three moves will hence be
\begin{align*}
R_1&=
\begin{bmatrix}
1 & 0 & 0 \\
0 & e^{i \theta} & 0 \\
0 & 0 & 1 \\
\end{bmatrix},
& R_2&=\frac{1}{1-e^{-i\theta}}
\begin{bmatrix}
-e^{-i \theta} & -1 & 1 \\
-1 & -e^{-i \theta} & 1 \\
-2\cos \theta & -2 \cos \theta & 1+e^{i \theta} \\
\end{bmatrix},
&S_1&=
\begin{bmatrix}
e^{i \theta} & 0 & 0 \\
0 & 1 & 0 \\
0 & 0 & 1 \\
\end{bmatrix}.
\end{align*}
\begin{rk}\label{S1prop}
We remark that $S_1$ commutes with $R_1$ and satisfies the braid relation with $R_2$.
\end{rk}

By looking at the coordinates of the vertices of the polyhedron and keeping in mind that $\phi=\theta$, it is easy to see that the action of $S_1$ on the vertices is the following:
\begin{align*}
S_1& \colon \textbf{z}_1 \to \textbf{z}_1, & 
S_1& \colon \textbf{z}_6 \to \textbf{z}_3, &
S_1& \colon \textbf{z}_7 \to \textbf{z}_4, & 
S_1& \colon \textbf{z}_8 \to \textbf{z}_5, \\
S_1& \colon \textbf{z}_9 \to \textbf{z}_9, & 
S_1& \colon \textbf{z}_{11} \to \textbf{z}_{10}, &
S_1& \colon \textbf{z}_{12} \to \textbf{z}_{12}, & 
S_1& \colon \textbf{z}_{14} \to \textbf{z}_{13}.
\end{align*}
In other words, this means that $S_1 \colon B(J) \to B(P)$.

It is then natural to use $S_1$ as a side pairing map and to find another map which will map $B(J^{-1})$ and $B(P^{-1})$ to each other. 
With $P=R_1R_2$ as before, we can define $S_2=PS_1P^{-1}$, which will act on the $\textbf{w}$-coordinates in the same way as $S_1$ does on the $\textbf{z}$-coordinates. 
In this sense they have an analogous relation to the one between $R_1$ and $R_2$. 
By inspection on the table of coordinates of the vertices, one can see that the action of $S_2$ is
\begin{align*}
S_2& \colon \textbf{z}_2 \to \textbf{z}_2, & 
S_2& \colon \textbf{z}_3 \to \textbf{z}_{11}, &
S_2& \colon \textbf{z}_4 \to \textbf{z}_{10}, & 
S_2& \colon \textbf{z}_5 \to \textbf{z}_9, \\
S_2& \colon \textbf{z}_6 \to \textbf{z}_7, & 
S_2& \colon \textbf{z}_{8} \to \textbf{z}_{8}, &
S_2& \colon \textbf{z}_{13} \to \textbf{z}_{12}, & 
S_2& \colon \textbf{z}_{14} \to \textbf{z}_{14}.
\end{align*}
This means that $S_2$ sends $B(P^{-1})$ to $B(J^{-1})$ as required. 

The new side pairing maps will then be
\begin{align*}
R_1 &\colon B(R_1) \to B(R_1^{-1}), & R_2 &\colon B(R_2) \to B(R_2^{-1}), \\
S_1 &\colon B(J) \to B(P), & S_2 &\colon B(P^{-1}) \to B(J^{-1}).
\end{align*}

In order to apply the Poincaré polyhedron theorem for cosets, we now need a group $\Upsilon$ that leaves the polyhedron invariant and is compatible with the action of the side pairing maps. 
Let us then define $K=R_1R_2S_1$. 
This is similar to the definition of $J$, but using $S_1$ instead of $A_1$. 
Multiplying the matrices gives 
\[
K=\frac{1}{1-e^{-i\theta}}
\begin{bmatrix}
-1 & -1 & 1 \\
-e^{2i\theta} & -1 & e^{i\theta} \\
-2\cos \theta e^{i \theta} & -2 \cos \theta & 1+e^{i \theta} \\
\end{bmatrix}.
\]

\begin{rk}
By looking at the eigenvalues of $K$, one can see that projectively it has order 4, since $e^{i \theta}K$ has both determinant and trace equal 1.
\end{rk}

One can apply $K$ to the vertices of the polyhedron and verify that its action is the following:
\begin{align*}
K& \colon \textbf{z}_1 \to \textbf{z}_2, & 
K& \colon \textbf{z}_2 \to \textbf{z}_1, &
K& \colon \textbf{z}_3 \to \textbf{z}_{10}, & 
K& \colon \textbf{z}_4 \to \textbf{z}_9, \\
K& \colon \textbf{z}_5 \to \textbf{z}_{11}, & 
K& \colon \textbf{z}_6 \to \textbf{z}_4, &
K& \colon \textbf{z}_7 \to \textbf{z}_5, & 
K& \colon \textbf{z}_8 \to \textbf{z}_3 \\
K& \colon \textbf{z}_9 \to \textbf{z}_{14}, & 
K& \colon \textbf{z}_{10} \to \textbf{z}_{12}, &
K& \colon \textbf{z}_{11} \to \textbf{z}_{13}, & 
K& \colon \textbf{z}_{12} \to \textbf{z}_8, \\
K& \colon \textbf{z}_{13} \to \textbf{z}_7, & 
K& \colon \textbf{z}_{14} \to \textbf{z}_6. &&&&
\end{align*}
This means that $K$ preserves the polyhedron and acts on the sides as
\begin{align*}
B(R_1) &\xrightarrow{K} B(R_2) \xrightarrow{K} B(J) \xrightarrow{K} B(P^{-1}) \xrightarrow{K} B(R_1), \\
B(R_1^{-1}) &\xrightarrow{K} B(R_2^{-1}) \xrightarrow{K} B(P) \xrightarrow{K} B(J^{-1}) \xrightarrow{K} B(R_1^{-1}),
\end{align*}
namely it cyclically permutes them, preserving the two columns in Figure \ref{sidepairing}.
Using Remark \ref{S1prop}, and the braid relation between $R_1$ and $R_2$, it is easy to see that 
\begin{align*}
R_2&=KR_1K^{-1}, & S_1&=K^2 R_1K^{-2}, & S_2&=K^3 R_1K^{-3}, & R_1&=K^4 R_1K^{-4}
\end{align*}
which proves that the action of $K$ is compatible with the side pairing maps. 

We now define $\Upsilon= \langle K \rangle$ and we are in the framework of the Poincaré polyhedron theorem for coset decompositions.
The theorem ensures that we need to check the tessellation only for one ridge per cycle (which we already knew) and for one ridge per orbit under the action of $K$. 
This means that we need to analyse only the ridges contained in $B(R_1)$, which are $F(R_1,R_1^{-1}), F(R_1,P), F(R_1,J), F(R_1,R_2^{-1})$ and $F(R_1, J^{-1})$, for which we already proved the tessellation property. 

We just need to check how the ridge cycles change with the new side pairing maps, so to give a presentation for these groups according to the theorem. 
The cycles for the ridges we mentioned are the following:
\begin{align*}
F&(R_1,R_1^{-1}) \xrightarrow{R_1} F(R_1,R_1^{-1}), \\
F&(R_1,P) \xrightarrow{R_1} F(P,R_1^{-1}) \xrightarrow{S_1^{-1}} F(R_1^{-1},J) 
\xrightarrow{R_1^{-1}} F(J, R_1) \xrightarrow{S_1} F(R_1,P), \\
F&(R_1,R_2^{-1}) \xrightarrow{R_1} F(P^{-1},R_1^{-1}) \xrightarrow{K} F(R_1,R_2^{-1}) \\
F&(R_1, J^{-1}) \xrightarrow{R_1} F(R_2,R_1^{-1}) \xrightarrow{K^{-1}} F(R_1, J^{-1}).
\end{align*}
Remark that we stop when we come back in the same cycle or when we arrive in the same ridge orbit under the action of $K$. 

The presentation obtained from Poincaré polyhedron theorem for coset decompositions is then
\[
\Gamma=\left\langle K,R_1 \colon
\begin{array}{l l}
R_1^p=K^4=(K^{-1}R_1)^{3d}=(KR_1)^3=K^2S_1^{-1}R_1=I,\\
 (K^2R_1)^2=(R_1K^2)^2
\end{array} \right\rangle.
\]

We want to remark that since $k=\frac{p}{2}$, by rewriting \eqref{ldef} or simply by inspection in Table \ref{tablelist}, we have that $l=d$. 
It is then not surprising that the relation in the presentation where $l$ appeared, here it becomes $(K^{-1})^{3d}=I$.

Finally, when $k=\frac{p}{2}$, we have $t=\frac{5}{p}-\frac{1}{2}$ and by applying this to the formula found in \eqref{vol}, we obtain 
\[
\chi (\hc / \Gamma)= \frac{2(p-5)}{p^2},
\]
which is 4 times the formula in the Livné case. 
This is what we expected since, as we already mentioned, these lattices are isomorphic to the corresponding ones of the form $(p, 2)$, which are the Livné ones and $D$ containes four copies of a fundamental domain for them.

\section{Previously known cases}

In this case we will show how to change the polyhedron according to the values of $p$ and $k$ so as to include all lattices with three fold symmetry listed in Section \ref{list}, including the cases previously treated.

\subsection{Degenerate cases}\label{degenerate}

The first thing to remark is that the parametrisation we chose in \eqref{singularities} is completely general and can be used to parametrise all possible lattices in our list when we impose $\theta= \frac{2\pi}{p}$ and $\phi= \frac{\pi}{k}$ as before.

In \cite{livne}, the same angle parametrisation holds after imposing $\phi=\frac{\pi}{2}$, since for all lattices of that group $k=2$.
In \cite{boadiparker}, this parametrisation has explicitly been used. 
Other cases on the list could be treated with an extra condition. 
The lattices of fourth type, for example, always have $\phi=\frac{\pi}{3}$.
All of the ones of type 5, instead, satisfy $\theta=\phi$ since $k=\frac{p}{2}$, as mentioned after interchanging $k$ and $l$ if necessary. 
This construction though includes all the other cases up to imposing the values of $p$ and $k$ that we want to consider. 

\begin{figure}
\centering
\includegraphics[width=1\textwidth]{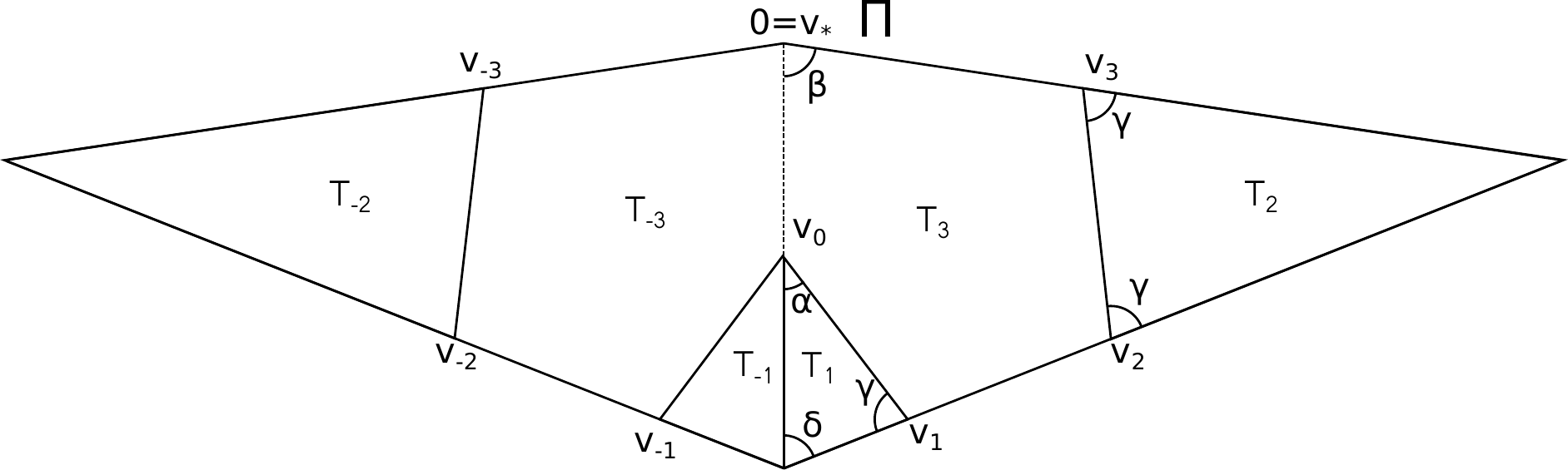}
\begin{quote}\caption{The angles whose values determines which polyhedron we shall consider. \label{angleclass}} \end{quote}
\end{figure}

The difference comes out when we start making the singularities collapse in order to find the vertices of the polyhedron. 
This is because when we make $T_1$ and $T_2$ shrink or enlarge, the vertices of $D$ change according to the size of the angles.
Let us consider a generic configuration as in Figure \ref{octagonreal}. 

The angles that we will have to consider are marked in Figure \ref{angleclass}. 
In particular, the vertices of the polyhedron will depend on the values of 
\begin{itemize}
\item The angle in $T_1$ at the vertex $v_0$, which we will call $\alpha$;
\item The angle in $T_3$ at $v_*$, which we will call $\beta$;
\item The two equal angles in $T_2$, which we will call $\gamma$;
\item The angle in $T_1$ at $v_1$, which by construction is equal to the angle $\gamma$ defined previously;
\item The third angle in $T_1$, which we will call $\delta$.
\end{itemize}

In this section we will explain the conditions on this angles to determine which are the vertices of our polyhedron.
Then we will substitute their values, that can be easily calculated in terms of $p$ and $k$. 

What we need to show is that, for the particular values we are considering, the vertices that we can obtain by making cone points collapse are the ones described in the theorem. 
Let us first consider the cases where $p>0$.

We have the following situation:
\begin{enumerate}
\item Vertices $\textbf z_1$ and $\textbf z_2$ are always possible and they do not depend on the angles at all. They will hence always be in the polyhedron.

\begin{figure}[!h]
\centering
\includegraphics[width=1\textwidth]{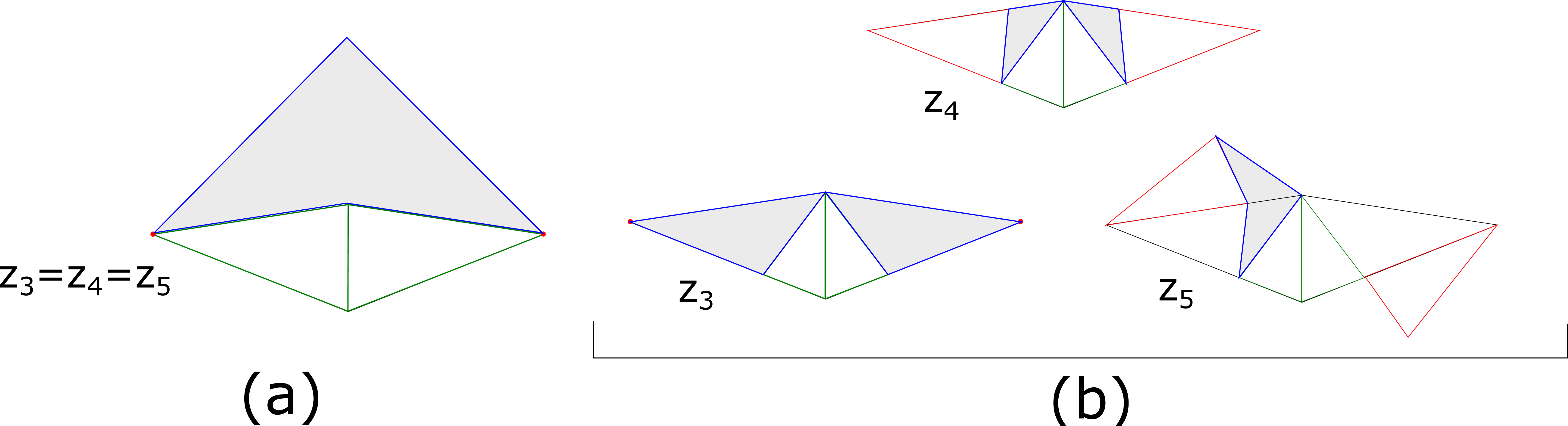}
\begin{quote}\caption{The two possibilities for the vertices in case 2. \label{345}} \end{quote}
\end{figure}

\item If we let $z_1$ be as big as possible, keeping it real and such that $T_1$ is in the interior of $T_3$, there are two possibilities, illustrated in Figure \ref{345}.
As the coordinate grows, either $v_1$ will coincide with the apex vertex of $T_2$, or $v_0$ will coalesce with $v_*$.

In the first case (a) there is no other possibility for $T_2$ but to collapse to a point, giving a single vertex defined by $v_1 \equiv v_2 \equiv v_3$. 
This is the case when $\beta \leq \alpha$.

In the second case (b) we have instead that $v_0 \equiv v_*$.
Also, $T_2$ has still some degrees of freedom, so we can make $z_2$ either to be 0, either to be as large as possible but still real, or to be as large as possible but after rotating it as in Figure \ref{345}. 
The three options give respectively that also $v_2 \equiv v_3$, $v_1 \equiv v_2$ or $v_1 \equiv v_3$.
This is the case when $\beta \geq \alpha$.

\begin{figure}[ht]
\centering
\includegraphics[width=1\textwidth]{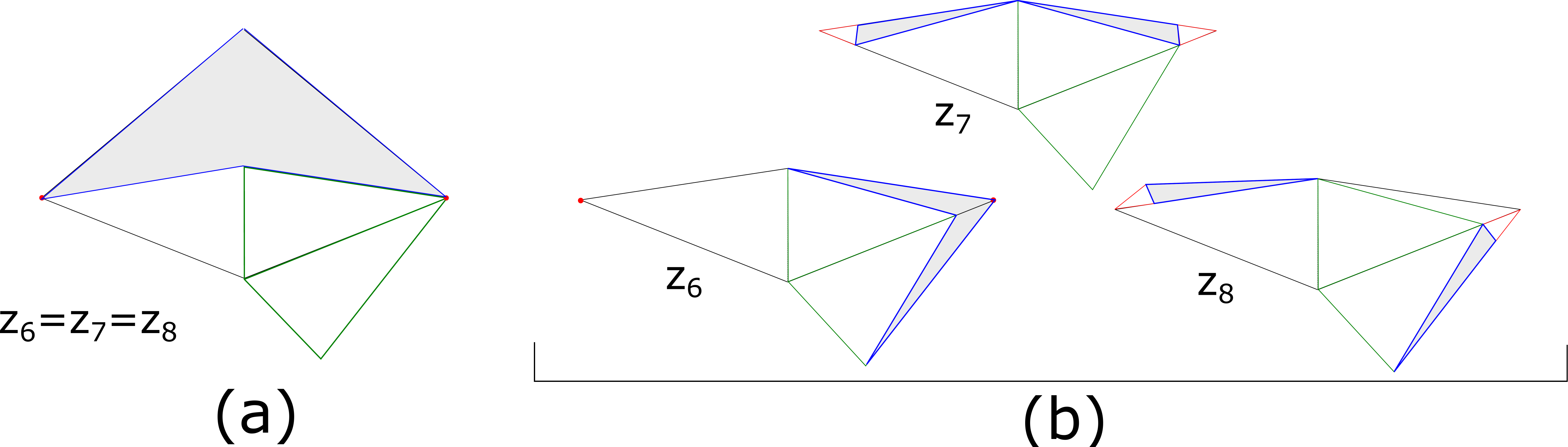}
\begin{quote}\caption{The two possibilities for the vertices in case 3. \label{678}} \end{quote}
\end{figure}

\item With a similar argument, by imposing $z_1= re^{-i \phi}$ with $r$ as big as possible, but such that $T_{-1}$ is inside $T_3$, we can get the two possibilities in Figure \ref{678}. 

Case (a) will correspond to when the cone points collapsing are  $v_0 \equiv v_2 \equiv v_3$ and it corresponds to the case when $\gamma \geq \beta$.

Case (b) is when we have $v_* \equiv v_{-1}$.
The three choices will be when also $v_2 \equiv v_3$, $v_0 \equiv v_2$ or $v_0 \equiv v_3$ and it occurs when $\gamma \leq \beta$.

\begin{figure}[!h]
\centering
\includegraphics[width=1\textwidth]{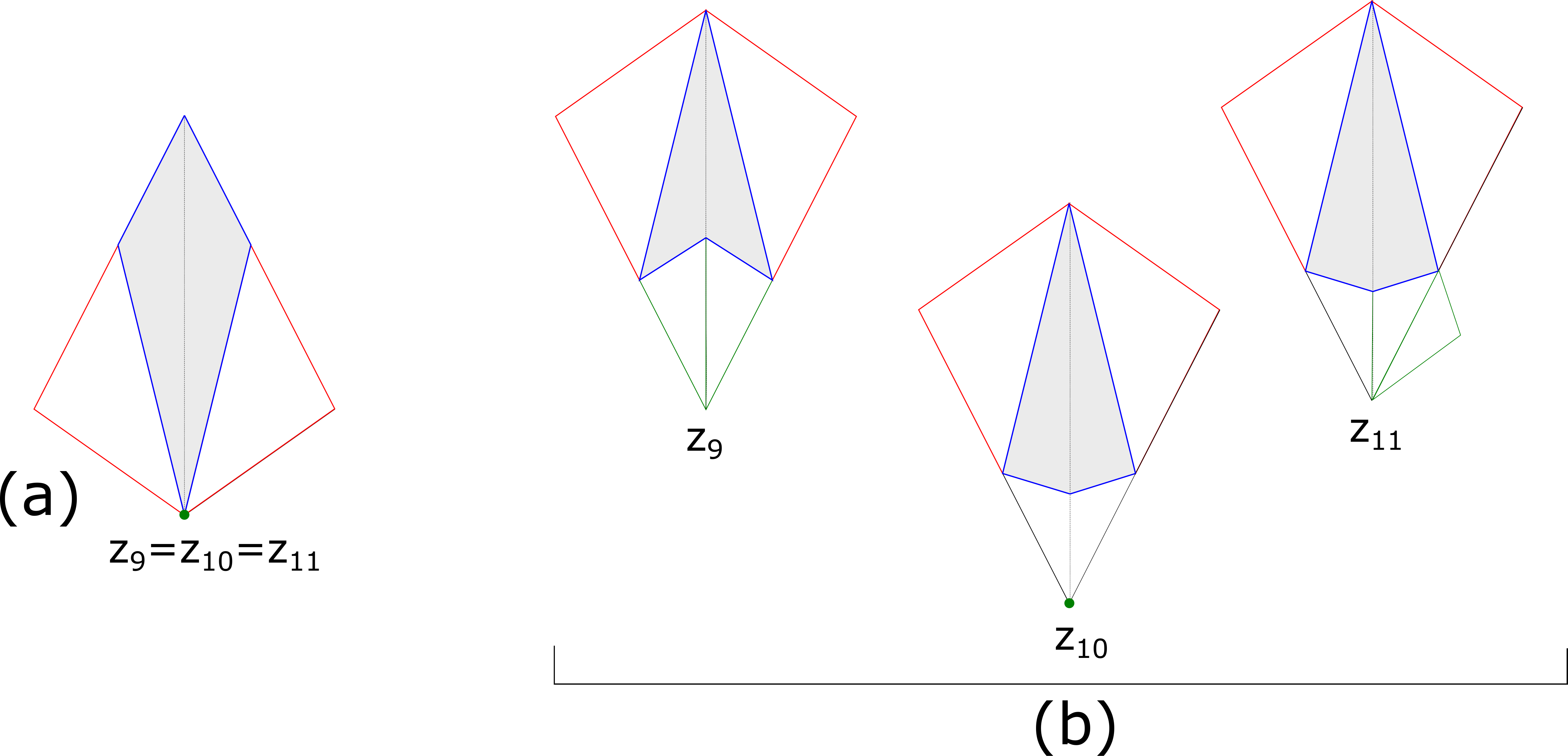}
\begin{quote}\caption{The two possibilities for the vertices in case 4. \label{91011}} \end{quote}
\end{figure}

\item Similarly, when $z_2$ is real, as big as possible and such that $T_2$ is inside $T_3$, we can get the configurations in Figure \ref{91011}. 

Case (a) occurs when $\gamma \leq \delta$ and the points will be $v_2 \equiv v_1 \equiv v_0$.

In Case (b) we always have the condition $v_* \equiv v_3$, with the three possibilities as $v_0 \equiv v_1$, $v_1 \equiv v_2$ or $v_0 \equiv v_2$.
This happens when $\gamma \geq \delta$.

\item Once more, when $z_2=r e^{i \theta}$, for $r$ as big as possible but still maintaining a positive area, we can have the configurations as in Figure \ref{121314}.

We will hence have Case (a), when $\delta \geq \beta$ and where $v_0 \equiv v_1 \equiv v_3$. 

When $\delta \leq \beta$ we will have Case (b) instead, with $v_* \equiv v_{-2}$ for all the three vertices and $v_0 \equiv v_1$, $v_1 \equiv v_3$ or $v_0 \equiv v_3$ in the each of them.
\end{enumerate}

\begin{figure}[!h]
\centering
\includegraphics[width=1\textwidth]{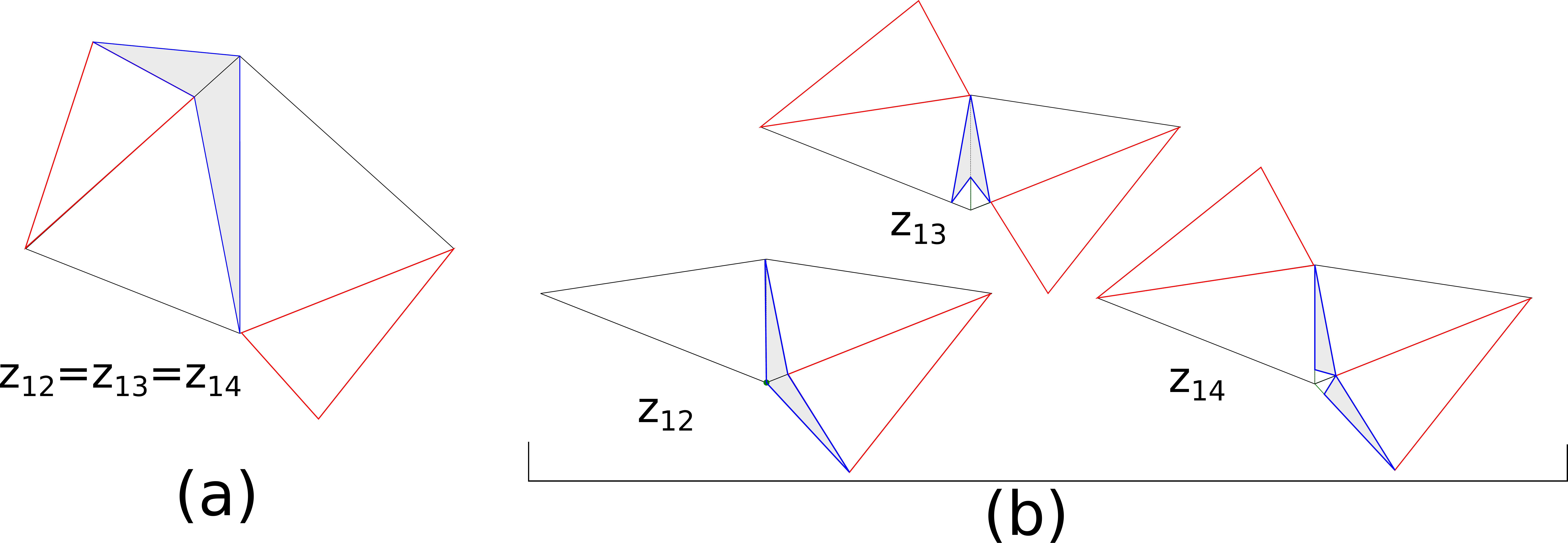}
\begin{quote}\caption{The two possibilities for the vertices in case 5. \label{121314}} \end{quote}
\end{figure}

It is clear that since in each case we have either one or three vertices, the cases with fewer vertices will be obtained by the case with more vertices by making triplets of vertices collapse to just one. 
On the other hand, the case with many vertices can be obtained from the other by cutting through a corner so to make one vertex become three. 
We will see in Section \ref{type2} that this is exactly the case, for the values of $p$ and $k$ that have already been treated. 

In Figure \ref{octagonreal} it is easy to see that 
\begin{align*}
\alpha &=\frac{\pi}{2}+\frac{\theta}{2}-\phi, & \beta &=\pi - \theta - \phi, &
\gamma &=\frac{\pi}{2}-\frac{\theta}{2}, & \delta &= \phi.
\end{align*}
Substituting the values of the angles in terms of $p$ and $k$, we can summarise the cases with the following table.

\begin{center}
\begin{tabular}{|cc|c|c|}
\hline
Case && Relation on the angles & Relation on $p$ and $k$ \\
\hline 
2 & (a) & $\beta \leq \alpha$ & $p \leq 6$ \\
 & (b) & $\beta \geq \alpha$ & $p \geq 6$ \\
\hline
3 & (a) & $\beta \leq \gamma$ & $k \leq \frac{2p}{p-2}$ \\
 & (b) & $\beta \geq \gamma$ & $k \geq \frac{2p}{p-2}$ \\
\hline
4 & (a) & $\gamma \geq \delta$ & $k \geq \frac{2p}{p-2}$ \\
 & (b) & $\gamma \leq \delta$ & $k \leq \frac{2p}{p-2}$ \\
\hline
5 & (a) & $\beta \leq \delta$ & $k \leq \frac{2p}{p-2}$ \\
 & (b) & $\beta \geq \delta$ & $k \geq \frac{2p}{p-2}$ \\
\hline
\end{tabular}
\end{center}
As we can see, three of these conditions correspond to the same values for $p$ and $k$, so we will either have all cases of the three vertices or all cases of a single vertex. 
Consequently, there are four possible cases and they are the four values of $p$ and $k$ given in the Theorem \ref{main}.

It is clear that the case of $D$ described in the previous section is the one where all 14 vertices remain distinct.
The other cases of the theorem follow immediately by our analysis.
In fact, we will have one case where only one triplet collapses, one case where three triplets collapse and one case where all four do. 
By considering the theorem and the figures to see which vertices are collapsing, we just need to consider that the name of the configurations given in Figures \ref{345}--\ref{121314} are the same as the ones given for $D$ in the previous sections.

We remark that when the angles we are considering are equal, while making the points collapse to get a vertex, we obtain some configurations with zero area, so on the boundary of the complex hyperbolic space. 
A more precise discussion of what happens in these cases can be found in \cite{livne} and \cite{boadiparker}.
Moreover, it is clear that we do not have the choice of the three configurations, so it is more natural to include them in the case of the lower values of the parameters as we did in Theorem \ref{main}.

Another way to see this is to notice that the cases where three vertices collapse correspond to when the values of $l$ and $d$ are negative.
We saw that these two values are the order of the cycle maps $R_2R_1J$ and $P^3$ respectively.
As explained in \cite{survey}, when $l$ or $d$ is negative, the corresponding map becomes a complex reflection in a point instead of a complex reflection in a line. 
The ridge on the mirror indeed becomes a single point.
When they are not finite, the corresponding map becomes a parabolic element.

\subsection{Relation with the previous construction for type 2}\label{type2}

In this section we will analyse the relation between this method and the previous fundamental polyhedra found for Deligne-Mostow lattices with three fold symmetry lattices.

\begin{figure}[!ht]
\centering
\includegraphics[width=0.7\textwidth]{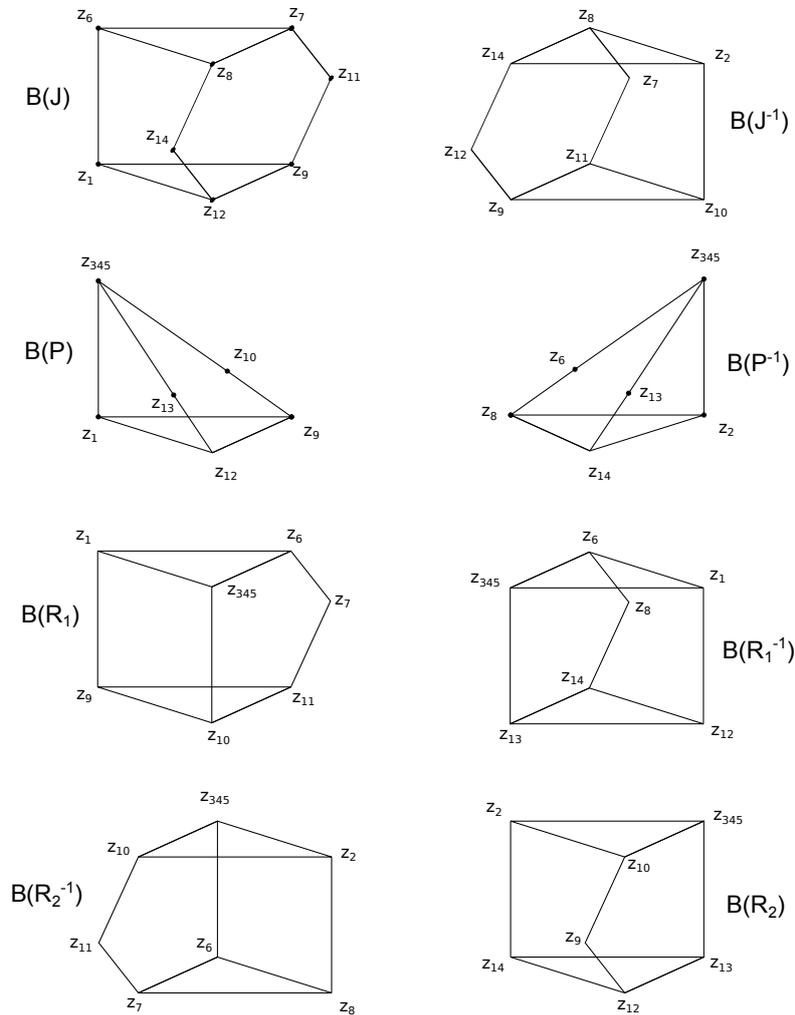}
\begin{quote}\caption{The sides and the side pairing maps compared for our polyhedron and the previous one for type 2 lattices. \label{sidest2}} \end{quote}
\end{figure}

For the cases analysed in \cite{boadiparker} and \cite{livne} our construction follows step by step the one used there. 
Already in \cite{survey} it has been explained that the fundamental polyhedron for type 1 can be obtained from the one of type 3 by truncating a vertex with a triangle contained in a complex line.
In that case, one vertex becomes three and we will see that it corresponds to the case (a) and (b) in point 2 of our analysis of the vertices. 
Comparing the sides for these cases and the ones for ours it is easy to see that the same thing can be done from our polyhedron.

For type 2, a construction was already found in \cite{type2}.
Since the approach there is a bit different from ours, Parker in \cite{survey} already showed how to see in their procedure an approach similar to ours. 
What we do here though, gives a different presentation for the group and an easier construction of the polyhedron, more coherent with the known construction for the other cases. 

The main difference comes from the fact that the sides and the side-pairing maps considered there are slightly different from ours. 
We now want to explain how to reconcile the two presentations. 
First of all, for the case we are talking about we need to make the vertices $\textbf{z}_3, \textbf{z}_4$ and $\textbf{z}_5$ collapse to a single vertex as we saw in Theorem \ref{main} and we will call this new vertex $\textbf z_{345}$.
The sides of the polyhedron $D$ after collapsing sides as described for the second case of our main theorem, will be as in Figure \ref{sidest2}.

We want now to compare our construction with the sides of the polyhedron considered in \cite{type2} as shown in Figure 11 of \cite{survey}. 
To refer to sides in our construction, we will use $B(T)$, for $T \in \{J^{\pm 1},P^{\pm 1},R_1^{\pm 1},R_2^{\pm 1}\}$, while for the sides used before we will be coherent with their notation and call them $S(T)$, for $T \in \{J^{\pm 1},P_1^{\pm 1},P_2^{\pm 1},R_1^{\pm 1},R_2^{\pm 1}\}$.

The map $J$ considered in each case coincides and so do the sides $B(J)=S(J)$ and the sides $B(J^{-1})=S(J^{-1})$.
The same thing is true for $P=P_1=R_1R_2$ and the corresponding sides. 
On the other hand, the four sides $B(R_1^{\pm 1})$ and $B(R_2^{\pm 1})$ and the side pairing $R_1$ and $R_2$ include in their action the six remaining sides $S(R_1^{\pm 1})$, $S(R_2^{\pm 1})$ and $S(P_2^{\pm 1})$. 
In fact, the previous procedure splits the sides $B(R_1)$ and $B(R_1^{-1})$ in two blocks each, by cutting along a line through vertices $\textbf{z}_9,\textbf{z}_{11},\textbf{z}_{345}$ and a line through $\textbf{z}_{12},\textbf{z}_{14},\textbf{z}_{345}$ respectively.
Then, for each of $B(R_1)$ and $B(R_1^{-1})$, of the two pieces of side obtained we consider the one not containing vertex $\textbf z_{10}$ and vertex $\textbf z_{13}$ respectively.
These are exactly the sides $S(R_1)$ and $S(R_1^{-1})$, and $R_1$ sends the first to the latter. 
Similarly, for $B(R_2)$ and $B(R_2^{-2})$, we divide the sides in two blocks by cutting with a line through $\textbf{z}_{12},\textbf{z}_{14},\textbf{z}_{345}$ and a line through $\textbf{z}_{7},\textbf{z}_{8},\textbf{z}_{345}$ respectively. 
We then consider the block not containing vertex $\textbf z_{13}$ and $\textbf z_6$ respectively and these are sides $S(R_2)$ and $S(R_2^{-1})$ respectively, the first sent to the second by $R_2$. 

We have then four more block to consider. 
The first remark is that there are, in fact, only three blocks, because the parts of $B(R_1^{-1})$ and of $B(R_2)$ containing vertex $\textbf z_{13}$ are the same block.
For simplicity, we will call it $S(T)$. 
The other two blocks are exactly sides $S(P_2)$ and $S(P_2^{-1})$.  
We also know by our construction that $R_1$ sends $S(P_2)$ to $S(T)$, while $R_2$ sends $S(T)$ to $S(P_2^{-1})$. 
Since $P_2=R_2R_1$ by definition, that is the side pairing map that sends the two new blocks $S(P_2)$ to $S(P_2^{-1})$, as described in \cite{survey}. 
This is illustrated in Figure \ref{splitsides}.

\begin{figure}[t]
\centering
\includegraphics[width=1\textwidth]{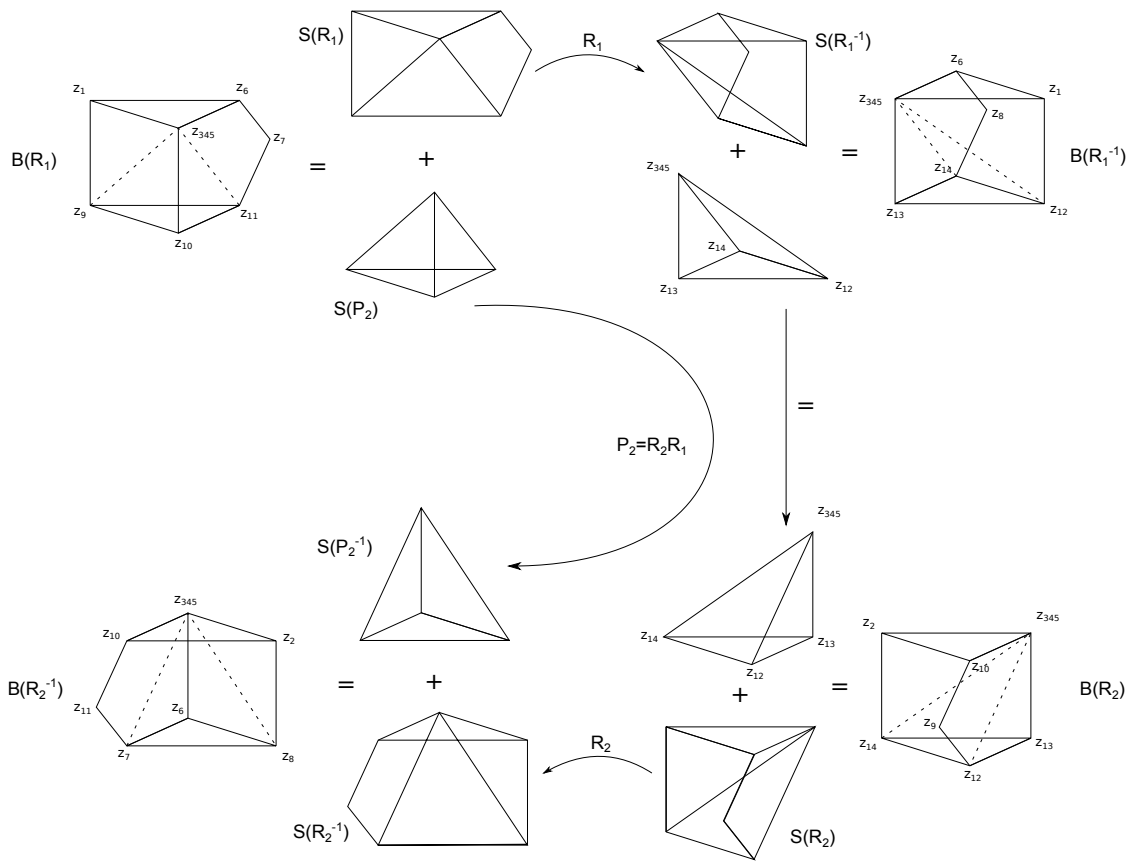}
\begin{quote}\caption{The sides and the side pairing maps compared for our polyhedron and the previous one. \label{splitsides}} \end{quote}
\end{figure}

\clearpage
\addcontentsline{toc}{section}{\refname}
\bibliographystyle{alpha}
\bibliography{biblio}
\nocite{*}

\end{document}